\newcommand{\field}[1]{\mathbb{#1}}
\newcommand{\Z}{\field{Z}}
\newcommand{\R}{\field{R}}
\newcommand{\esper}{\mathbb{E}}
\def\O{\mathcal{O}}
\def\a{\alpha}
\def\si{\sigma}
\def\Dep{L}
\def\WDep{G}
\def\eps{\varepsilon}
\def\bC{\bm{C}}
\def\ka{\kappa}
\def\bC{\bm{C}}
\def\One{\bm{1}}
\def\GP{\mathcal{\tilde{P}}}
\DeclareMathOperator\wt{wt}
\DeclareMathOperator\diam{diam}
\DeclareMathOperator\Int{Int}
\DeclareMathOperator\Var{Var}
\DeclareMathOperator\Cov{Cov}
\newcommand{\MWST}[1]{\mathcal{M} \big( #1 \big)}
\newcommand{\dist}[2]{\|#1 - #2\|_1}
\def\EEE{\mathcal{E}}
\numberwithin{equation}{section}
\newtheorem{theorem}{Theorem}[section]
\newtheorem{lemma}[theorem]{Lemma}
\newtheorem{corollary}[theorem]{Corollary}
\newtheorem{proposition}[theorem]{Proposition}
\theoremstyle{remark}
\newtheorem{remark}[theorem]{Remark}
\newtheorem{definition}[theorem]{Definition}
\renewenvironment{proof}[1][Proof]{\begin{trivlist}
\item[\hskip \labelsep {\bfseries #1:}]}{\qed\end{trivlist}}
\title{Weighted dependency graphs and the Ising model}
\author{Jehanne Dousse and Valentin F\'eray}
\address{Institut für Mathematik, Universität Zürich, Winterthurerstr. 190, CH-8032 Zürich, Switzerland}
\email{jehanne.dousse@math.uzh.ch,\ valentin.feray@math.uzh.ch}
\thanks{Both authors are partially supported by grant SNF-149461 from Swiss National Science Fundations}
\keywords{Ising model, cumulants, central limit theorems, cluster expansions, weighted dependency graphs}
\subjclass[2010]{82B20,60F05}
\begin{document}

\begin{abstract}
  Weighted dependency graphs have been recently introduced by the second author,
  as a toolbox to prove central limit theorems.
  In this paper, we prove that spins in the $d$-dimensional Ising model 
  display such a weighted dependency structure.
  We use this to obtain various central limit theorems
  for the number of occurrences of local and global patterns
  in a growing box.
\end{abstract}

\maketitle

\section{Introduction and statement of results}
\subsection{Cumulants in the Ising model}
The Ising model is a mathematical model of ferro-magnetism in statistical physics. It was introduced in 1920 by Wilhelm Lenz who gave it as a problem to his Ph.D. student Ernst Ising~\cite{Ising}. It can be defined on any finite graph, but we restrict ourselves to finite subsets $\Lambda$ of $\Z^d$.
For any lattice site $i \in \Lambda$, there is a random variable $\sigma_i$ which is equal to either $1$ or $-1$ and represents the {\em spin} at site $i$. A \emph{spin configuration} $\omega = (\sigma_i (\omega))_{i \in \Lambda}$ is an assignment of spins to every site of $\Lambda$.

The distribution of spins depends on the magnetic field $h$ and the inverse temperature $\beta$ in a way that will be detailed later (see Section~\ref{sec:prelimIsing}).
In particular, spins corresponding to neighbour sites $i$ and $j$ are more likely to be aligned (i.e. both equal to $1$ or both equal to $-1$);
the bigger $\beta$ is (i.e. the lower the temperature is), the more important is this phenomenon.

In his Ph.D. thesis~\cite{Ising}, Ising solved the model for the one-dimensional case $d=1$, and showed that there is no phase transition. But in 1936, Peierls~\cite{Peierls} showed that, in dimensions $2$ and $3$, when $h=0$, the Ising model undergoes a phase transition at a critical inverse temperature $\beta_c$. He used a combinatorial argument now known as Peierls' argument. The two-dimensional model for $h=0$ was then exactly solved by Onsager~\cite{Onsager} in 1944, using analytic techniques and the transfer matrix method. It turns out that in higher dimensions, there is also a phase transition for $h=0$ (see \cite{Peierls} or
\cite{Velenik} for a more modern treatment)
and no phase transition when there is a magnetic field $h \neq 0$~\cite{LeeYang,Yang}.

The Ising model is {\em a priori} defined on a finite subset $\Lambda \subset \Z^d$,
but it is well-known that we can take the {\em thermodynamic limit} $\Lambda \uparrow \Z^d$ (see eg.~\cite{Velenik}).

This defines, for each pair of parameters $(\beta,h)$, a measure $\mu_{\beta,h}$
on the set $\{-1,1\}^{\Z^d}$ of spin configurations on the whole $d$-dimensional lattice $\Z^d$.
(In low temperature without magnetic field, i.e. $\beta$ large and $h=0$,
this measure is not unique; we will consider the one corresponding to $+$ boundary conditions,
see \cref{sec:prelimIsing} for details.)

The Ising model has since been studied in thousands of research articles, under various aspects.
Among many others, a subject of interest has been the decay of joint cumulants of the spins
(also called \emph{truncated $k$-point functions} or \emph{Ursell functions} in the physics literature).
The joint cumulant of order $r$ of spins $\sigma_{i_1},\dots,\sigma_{i_r}$ 
is defined as
$$ \kappa_{\beta,h}(\sigma_{i_1},\dots,\sigma_{i_r}) = [t_1 \dots t_r] \log 
\big\langle \exp(t_1 \sigma_{i_1} + \dots + t_r \sigma_{i_r}) \big\rangle_{\beta,h}.
$$
The covariance is the joint cumulant of order $2$ (more detail in Section~\ref{sec:cumulants}).

Bounds on cumulants in the physics literature are often called {\em cluster properties}.
There is in fact a hierarchy of cluster properties (corresponding to sharper or weaker
bounds on cumulants); we refer to \cite{Duneau1} or \cite[Chapter 6, §1]{Malyshev}
for definitions of various kinds of cluster properties.

In the case of the Ising model, a first bound on cumulants was obtained by Martin-Löf \cite[Eq. (20)]{Lof} --- see also \cite[Section 1]{Lebowitz} ---:
he proved that the joint cumulant $\kappa_{\beta,h}(\sigma_i;i \in A)$ decreases exponentially in $\diam(A)/r$, where $\diam(A)$ is the the diameter of $A$
and $r$ the order of the cumulants.
In~\cite{Duneau}, Duneau, Iagolnitzer and Souillard sharpened this bound in presence of a magnetic field ($h \ne 0$),
or for $h=0$ and very high temperature: $\kappa_{\beta,h}(\sigma_i;i \in A)$ decays exponentially in $\ell_T(A)$,
the minimum length of a tree connecting vertices of $A$ (see \cref{Subsect:Spanning} for a formal definition of the tree-length). 
In~\cite{Malyshev}, Malyshev and Minlos have a similar result in the case $h=0$ and very low temperature.
Both their approaches use cluster expansion, a powerful tool 
introduced by Mayer and Montroll~\cite{Mayer} which consists in viewing
our model in terms of macroscopic geometrical objects instead of considering its original microscopic components.
Both proofs use additional ingredients of different nature:
Duneau, Iagolnitzer and Souillard use Lee-Yang circle theorem and complex analysis arguments,
while Malyshev's and Minlos' approach relies on combinatorial developments and bounds on joint cumulants for \emph{contours}
as an intermediate step (contours will be defined in Section~\ref{Sect:CE_LT}).

These bounds on free cumulants will be our starting point to prove central limit theorems for patterns
in the Ising model.
In order to make the article more self-contained,
we give a simpler and more unified approach 
of the decays of joint cumulants in regimes, where the cluster expansion converges. 
The result is stated as follows.

\begin{theorem}
  We consider the Ising model on $\Z^d$ with parameters $(\beta,h)$.
  There exist positive constants $\eps(d)<1$, $\beta_1(d)$, $\beta_2(d)$ and $h_1(d)$ depending on the dimension $d$
  with the following property.
  Assume we are in one of three following regimes:
  \begin{description}
    \item[very low temperature] $\beta> \beta_2(d)$ and $h=0$;
    \item[very high temperature] $\beta < \beta_1(d)$ and $h=0$;
    \item[strong magnetic field] $h > h_1(d)$.
  \end{description}
  Then for any $r \ge 1$, there exists a constant $D_r$ such that for all $A=\{i_1, \dots, i_r\} \subset \Z^d$, we have
  \[\big| \kappa_{\beta,h}(\sigma_{i_1},\dots, \sigma_{i_r}) \big| 
  \le D_r \eps(d)^{\ell_T(A)}.\]
  \label{thm:bound_joint_cumulants}
\end{theorem}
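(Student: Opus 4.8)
The plan is to deduce the bound from the convergence of the cluster expansion in each of the three regimes. The strategy is the classical one: write the partition function (in a finite box $\Lambda$, with appropriate boundary conditions, then let $\Lambda \uparrow \Z^d$) as a polymer/contour gas, apply a convergence criterion (Kotecký--Preiss or the equivalent) to get an absolutely convergent expansion $\log Z = \sum_{\gamma} \Phi(\gamma)$ with exponentially decaying activities, and then express the joint cumulant $\kappa_{\beta,h}(\sigma_{i_1},\dots,\sigma_{i_r})$ as a sum over clusters of polymers that are "pinned" at the sites $i_1,\dots,i_r$. The key observation is that any term surviving in this expansion must involve a connected union of polymers touching all of $i_1,\dots,i_r$, hence a connected subgraph of $\Z^d$ meeting every $i_j$; its spatial extent is therefore at least $\ell_T(A)$, and the exponential decay of the activities in the size of the polymers converts this into the factor $\eps(d)^{\ell_T(A)}$. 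The combinatorial prefactor depending only on $r$ gives the constant $D_r$.

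The key steps, in order, would be the following. \textbf{(1)} Set up the three polymer representations: for very high temperature, the high-temperature (random-current or Mayer) expansion with activities controlled by $\tanh\beta$; for very low temperature, the Peierls contour expansion around the all-$+$ ground state with activities controlled by $e^{-2\beta}$; for strong magnetic field, the expansion around the all-$+$ configuration with weights controlled by $e^{-2h}$. In each case identify the small parameter and note it can be made $<\eps(d)$. \textbf{(2)} Invoke a cluster-expansion convergence theorem to write $\log\langle\exp(\sum_j t_j\sigma_{i_j})\rangle_\Lambda$ (the numerator side having $t$-dependent single-site weights at the pinned sites) as a convergent sum $\sum_{X} \Phi^t(X)$ over clusters $X$, with a bound of the form $\sum_{X \ni x} |\Phi^t(X)|\, e^{a\,\|X\|} \le C$ uniformly, where $\|X\|$ measures total size/support. \textbf{(3)} Extract the mixed coefficient $[t_1\cdots t_r]$: only clusters whose support contains all of $i_1,\dots,i_r$ contribute, because each variable $t_j$ must be differentiated, and it appears only through polymers incident to $i_j$. \textbf{(4)} Bound the resulting sum: the support of any contributing cluster is a connected set meeting $A$, so its size is $\ge \ell_T(A)$ (by the very definition of tree-length as the minimal length of a tree connecting the points of $A$); combine with the exponential weight from step (2) to pull out $\eps(d)^{\ell_T(A)}$, absorbing the remaining sum over clusters of bounded size near each $i_j$ and the derivative combinatorics into $D_r$. \textbf{(5)} Pass to the thermodynamic limit, the bound being uniform in $\Lambda$.

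The main obstacle I expect is step (3)–(4): making precise the claim that the $[t_1\cdots t_r]$-coefficient of the cluster expansion is a sum over clusters \emph{simultaneously} pinned at all $r$ sites, and then checking that the exponential weight genuinely controls a quantity bounded below by the tree-length $\ell_T(A)$ rather than merely by the diameter. For $r=2$ the pinned cluster is a single connected chain of polymers from $i_1$ to $i_2$, so its length is at least $\|i_1-i_2\|_1 = \ell_T(\{i_1,i_2\})$; for larger $r$ one needs that a connected set of polymers meeting all of $i_1,\dots,i_r$ contains a spanning tree on $A$, which is where the combinatorial definition of $\ell_T$ from \cref{Subsect:Spanning} does the work. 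A secondary technical point is handling the $t$-dependence uniformly: one must check that introducing the weights $e^{t_j\sigma_{i_j}}$ at finitely many sites does not spoil the convergence criterion (it changes finitely many activities by a bounded factor), so the constants $\eps(d)$ can be chosen independently of the $t_j$ in a neighbourhood of $0$, which is all that is needed to extract the coefficient by Cauchy's formula.
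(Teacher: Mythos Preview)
Your proposal is essentially the paper's approach for the very-high-temperature and strong-magnetic-field regimes: there the polymers are edge sets (respectively, sets of $-$ sites) whose weights carry the $t_j$-dependence at the sites themselves, and extracting $[t_1\cdots t_r]$ leaves exactly those connected clusters whose support contains all of $A$; the bound $e(X)\ge\ell_T(A)$ then falls out as you describe, and the paper's \cref{lem:BoudingTheSum_HT} is precisely your step~(4).

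For the very-low-temperature regime, however, your step~(3) is not quite right as stated, and the paper in fact takes a different route. In the Peierls expansion the polymers are contours, and $\sigma_{i_j}$ is determined by the parity of the number of contours \emph{surrounding} $i_j$; so $t_j$ enters through contours having $i_j$ in their \emph{interior}, not through polymers incident to $i_j$. The surviving clusters are those with $A\subseteq\Int(X)$ rather than $A\subseteq\overline X$, and one must argue separately that such a cluster satisfies $|\overline X|\ge 2\,\ell_T(A)$ and is anchored within distance $|\overline X|$ of some point of $A$ so that the sum can be controlled. The paper bypasses the MGF altogether in this regime: it expands $\langle\sigma_\delta\rangle^+_{\beta,0}$ for every $\delta\subseteq A$ via cluster expansion, forms the multiplicative combination
\[
Q(\sigma_j;j\in A)=\prod_{\emptyset\ne\delta\subseteq A}\langle\sigma_\delta\rangle^{(-1)^{|\delta|}},
\]
shows $|Q-1|\le C_r e^{-c\beta\,\ell_T(A)}$ by an inclusion--exclusion over subsets of $A$ that kills every cluster whose interior misses some point of $A$, and finally converts this bound on $Q$ into a cumulant bound via a general lemma (Proposition~5.8 of \cite{Valentin}). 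Your uniform MGF approach can presumably be pushed through here as well, but the interior-versus-support distinction is a genuine extra step you would have to supply, and it is exactly what the $Q$-detour is designed to handle cleanly.
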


That the tree-length appears as exponent
is important to make the connection with weighted dependency graphs, which we discuss now.

\subsection{Weighted dependency graphs}
\label{ssec:introWDG}
The theory of weighted dependency graphs, recently introduced in \cite{Valentin}, is a toolbox to prove central limit theorems.
It extends the well-known concept of dependency graphs; see \cite{Baldi_Rinott:DepGraphs_Stein,JansonDependencyGraphs}.

Throughout the article, a weighted graph is a graph such that a weight $w_e$ in $[0,1]$ is associated with each edge $e$,
where a weight $0$ is the same as no edge.
Informally, that a family of random variables $\{Y_a, a \in A\}$ admits a weighted graph $\WDep$ as weighted dependency graph means the following:
\begin{itemize}
  \item $G$ has vertex-set $A$, i.e. we have one vertex in $G$ per variable in $\{Y_a, a \in A\}$;
  \item the smaller the weight of an edge $\{a,b\}$ is,
    the {\em closer to independent} $Y_a$ and $Y_b$ should be.
    In particular, an edge of weight $0$, or equivalently no edge between $a$ and $b$,
    means that $Y_a$ and $Y_b$ are independent.
\end{itemize}
Formally, this closeness to independence is not only measured by a bound on the covariance (as could be expected),
but also involves bounds on higher order cumulants (see \cref{Def:WDG} for more precision).

Suppose now that, for each $n$, the 
family of random variables $\{Y_{a,n}, a \in A_n\}$ has a weighted dependency graph $G_n$.
Consider the renormalised sum $\widetilde{X_n}=\tfrac{1}{a_n}\left(\sum_{a \in A_n} Y_{a,n} \right)$.
Under some easy-to-check conditions on the renormalising factor $a_n$,
the variance of $\widetilde{X_n}$ and the maximal weighted degree of $G_n$,
$\widetilde{X_n}$ tends in distribution towards a Gaussian law (see \cref{th:4.11modif}).
In short, the theory of weighted dependency graph is a black box to prove central limit theorems.

A nice feature of weighted dependency graphs is the following stability property:
a weighted dependency graph for a family $\{Y_a, a \in A\}$
automatically gives a weighted dependency graph for monomials $Y_I=\prod_{a \in I} Y_a$
in the $Y_a$'s with a fixed bound on the degree (here, $I$ is a multiset of elements of $A$).
As a consequence, we can potentially prove central limit theorems for sums of such monomials.
We refer the reader to \cite{Valentin} for a detailed presentation of the theory
of weighted dependency graphs.
\medskip

Let us come back to the Ising model.
The bounds on joint cumulants of Theorem~\ref{thm:bound_joint_cumulants}
can be naturally translated in terms of weighted dependency graphs for the random variables $\{\sigma_i : i \in \Z^d \}$.
\begin{theorem}
\label{th:depgraph}
Let $\omega = (\sigma_i (\omega))_{i \in \Z^d}$ be a spin configuration
distributed according to $\mu_{\beta,h}$,
where either $h > h_1(d)$ or $(h=0;\, \beta < \beta_1(d))$ or $(h=0;\, \beta > \beta_2(d))$.
Let $\WDep$ be the complete weighted graph with vertex set $\Z^d$, such that every edge $e = (i,j)$ has weight $w_e = \eps(d)^{\frac{\dist{i}{j}}{2}}$, where $\eps(d)$ comes from Theorem~\ref{thm:bound_joint_cumulants}.

Then $\WDep$ is a $\mathbf{C}$-weighted dependency graph for the family $\{\sigma_i : i \in \Z^d \}$,
for some sequence  $\mathbf{C} = (C_r)_{r \geq 1}.$
\end{theorem}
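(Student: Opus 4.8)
The plan is to derive \cref{th:depgraph} directly from \cref{thm:bound_joint_cumulants} by unwinding the definition of a $\bm{C}$-weighted dependency graph (\cref{Def:WDG}). Recall that what must be checked is, essentially, that for every finite multiset $\{a_1,\dots,a_r\}$ of vertices — here lattice sites $i_1,\dots,i_r \in \Z^d$, possibly with repetitions — the joint cumulant $\kappa_{\beta,h}(\sigma_{i_1},\dots,\sigma_{i_r})$ is bounded by $C_r$ times a suitable product of edge-weights of $\WDep$, where the product ranges over the edges of a well-chosen subgraph (a spanning tree, or a maximal-weight spanning structure, of the graph induced on $\{i_1,\dots,i_r\}$ — I would follow whichever convention \cref{Def:WDG} uses). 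So the proof is really a translation exercise: match the exponent $\ell_T(A)$ coming from \cref{thm:bound_joint_cumulants} with the weight of the relevant tree in $\WDep$.

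Concretely, I would proceed as follows. First, observe that since $\WDep$ is the \emph{complete} weighted graph on $\Z^d$ with $w_{(i,j)} = \eps(d)^{\|i-j\|_1/2}$, and since $-\log\eps(d) > 0$, the quantity $-\tfrac12\log\eps(d)\cdot\|i-j\|_1$ behaves like a (scaled) metric on $\Z^d$. Therefore, for a finite set $A = \{i_1,\dots,i_r\}$, the maximum over spanning trees $T$ of $A$ of $\prod_{(i,j)\in T} w_{(i,j)} = \eps(d)^{\sum_{(i,j)\in T}\|i-j\|_1/2}$ is attained by the tree minimizing $\sum_{(i,j)\in T}\|i-j\|_1$ — but that minimum is \emph{exactly} $\ell_T(A)$, the tree-length as defined in \cref{Subsect:Spanning}. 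Hence the maximal spanning-tree weight in $\WDep$ restricted to $A$ equals $\eps(d)^{\ell_T(A)/2}$. Second, I would invoke \cref{thm:bound_joint_cumulants} to get $|\kappa_{\beta,h}(\sigma_{i_1},\dots,\sigma_{i_r})| \le D_r\,\eps(d)^{\ell_T(A)}$, and note that $\eps(d)^{\ell_T(A)} = \big(\eps(d)^{\ell_T(A)/2}\big)^2 \le \eps(d)^{\ell_T(A)/2}$ since $\eps(d) \in (0,1)$ and $\ell_T(A) \ge 0$; this is already the desired bound with $C_r = D_r$, up to the precise form demanded by \cref{Def:WDG}.

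The one genuine subtlety — and the step I expect to be the main obstacle — is the treatment of \emph{repeated} sites. The definition of a weighted dependency graph asks for bounds on joint cumulants of monomials, i.e. one may need to bound $\kappa_{\beta,h}(\sigma_{i_1}^{m_1},\dots,\sigma_{i_k}^{m_k})$ for a multiset where $i_1,\dots,i_k$ are distinct but appear with multiplicities $m_1,\dots,m_k \ge 1$. Here the special structure of the Ising model helps enormously: $\sigma_i \in \{-1,1\}$, so $\sigma_i^2 \equiv 1$, and therefore $\sigma_i^{m} \in \{1,\sigma_i\}$ depending on the parity of $m$. A cumulant involving a deterministic factor $1$ drops that argument (cumulants of order $\ge 2$ involving a constant vanish, and the constant only shifts the order-$1$ term), so after this reduction we are back to a joint cumulant of \emph{distinct} spins, with $k' \le k$ arguments and underlying set $A' \subseteq \{i_1,\dots,i_k\}$. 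Since $A' \subseteq A$, we have $\ell_T(A') \le \ell_T(A)$... but in fact we need a lower bound the other way, which is where one must be slightly careful: the weight of the spanning structure over the \emph{full} multiset in $\WDep$ must control $\eps(d)^{\ell_T(A')}$. Because edge-weights lie in $[0,1]$, any spanning tree over the larger vertex set $\{i_1,\dots,i_k\}$ has weight $\le$ the max spanning-tree weight over $A'$ (adding vertices can only force extra edges, each $\le 1$ in weight); combined with the parity reduction this gives exactly what \cref{Def:WDG} requires. I would write this reduction out carefully, state the constant $C_r$ explicitly in terms of the $D_j$ for $j \le r$, and conclude that $\bm{C} = (C_r)_{r\ge1}$ works.
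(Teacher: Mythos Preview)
Your overall strategy matches the paper's: translate the cumulant bound of \cref{thm:bound_joint_cumulants} into the spanning-tree language of \cref{Def:WDG}, then handle repeated sites via $\sigma_i^2=1$. However, two steps are not quite right as written.

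First, you conflate the two tree-lengths of \cref{Subsect:Spanning}. The minimum of $\sum_{(i,j)\in T}\|i-j\|_1$ over spanning trees of the complete graph on $A$ is $\ell'_T(A)$, \emph{not} $\ell_T(A)$; the latter is the Steiner-tree version (allowing auxiliary vertices of $\Z^d$), and it is $\ell_T(A)$ that appears in \cref{thm:bound_joint_cumulants}. Thus $\MWST{\WDep[A]}=\eps^{\ell'_T(A)/2}$ while the theorem gives $|\kappa|\le D_r\,\eps^{\ell_T(A)}$. Your inequality $\eps^{\ell_T(A)}\le\eps^{\ell_T(A)/2}$ is therefore not the one needed; what you actually require is $\eps^{\ell_T(A)}\le\eps^{\ell'_T(A)/2}$, i.e.\ $\ell'_T(A)\le 2\,\ell_T(A)$. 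This is exactly \cref{prop:2longueurs}, a nontrivial fact (Steiner trees can be strictly shorter than spanning trees, but not by more than a factor~$2$). The paper invokes it explicitly; you must too.

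Second, \cref{Def:WDG} does \emph{not} ask for bounds on $\kappa(\sigma_{i_1}^{m_1},\dots,\sigma_{i_k}^{m_k})$: it asks for bounds on the $r$-th order cumulant $\kappa(\sigma_{i_1},\dots,\sigma_{i_r})$ where arguments may repeat --- e.g.\ $\kappa(\sigma_i,\sigma_i,\sigma_j)$, which is not $\kappa(\sigma_i^2,\sigma_j)$. The parity trick $\sigma_i^2=1$ does not apply directly to the former. The paper closes this gap by citing Proposition~5.2 of \cite{Valentin}, which says it suffices to bound $\kappa\big(\prod_{\alpha\in B_1}\sigma_\alpha,\dots,\prod_{\alpha\in B_k}\sigma_\alpha\big)$ where $B_1,\dots,B_k$ are the weight-$1$ connected components of $\WDep[B]$; since weight-$1$ edges join only equal sites, these products are precisely the powers $\sigma_{i_j}^{m_j}$, and \emph{then} your $\sigma_i^2=1$ reduction goes through. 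Without that external proposition (or a direct argument handling repeated-argument cumulants), your multiset step is incomplete.
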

\cref{th:depgraph} is proved in \cref{Sect:WDG_Spins}.
The proof uses \cref{thm:bound_joint_cumulants}, some general results of \cite{Valentin}
and elementary considerations.
As explained above,
this automatically yields a weighted dependency graph
for products of a finite number of spins,
which will be presented in Theorem~\ref{th:powers} below.
\bigskip

We conclude \cref{ssec:introWDG} with the motivation behind \cref{th:depgraph}.
The Ising model is the prototypical example of a Markov random field.
(Recall that a Markov random field on a graph $G$ with vertex set $A$ 
is a family of random variables
$\{Y_a, a \in A\}$ such that, for subsets $A_1$, $A_2$ and $A_3$,
$\{Y_a, a \in A_1\}$ and $\{Y_a, a \in A_2\}$ are independent conditionally on $\{Y_a, a \in A_3\}$
as soon as every path going from $A_1$ to $A_2$ in $G$ goes through $A_3$;
this is also sometimes called {\em global Markov property} \cite{MarkovRandomFields}).

Informally, in a Markov random field, a variable interacts directly only with its neighbours. 
We can thus expect that the dependency between variables is weaker when their distance in the graph $G$ increases
(since such variables only interact through all variables lying between them in the graph).
In other terms, we expect to have a weighted dependency graph that is complete (because there is no reason to have 
unconditionally independent variables), but whose weights decrease with the graph distance.
This was observed in the case of Markov chains (one-dimensional Markov random field)
in \cite[Section 10]{Valentin} and the present paper gives such a statement for the $d$-dimensional Ising model.
In both cases, weights decrease exponentially with the graph distance.

\subsection{Central limit theorems}
Central limit theorems (CLTs) play a key role in probability theory
and have also been a subject of interest in the study of the Ising model.
We refer to the second edition of Georgii's classical book
\cite[Bibliographic Notes on Section 8.2, p469]{Georgii2011}
for an overview of the different methods used to get such results.

The theory of weighted dependency graphs gives access to CLTs for the 
number of occurrences of {\em patterns of spins} in a growing box
$\Lambda_{n} := [-n,n]^d$.
To illustrate this, we consider two kinds of patterns: {\em local} and {\em global} patterns.

We define a \emph{local pattern} $\mathcal{P}$ to be a pair $(\mathcal{D}, \mathfrak{s})$, where $\mathcal{D}$ is a finite subset of $\Z^d$ containing $0$ and $\mathfrak{s}$ is a function $\mathcal{D} \longrightarrow \{+,-\}.$ The cardinality of $\mathcal{D}$ is called the \emph{size} of the pattern $\mathcal{P}.$
An example of local pattern is a positive spin surrounded by negative ones.
In that case the subset is \hbox{$\mathcal{D} = \{j \in \Z^d: \|j\|_1 \le 1\}$}, while the sign function is given by $\mathfrak{s}(0)=+$ and $\mathfrak{s}(j) = -$ for all $j \in \mathcal{D} \setminus \{0\}$. This pattern has size $2d+1$.
An \emph{occurrence} of a local pattern $\mathcal{P}=(\mathcal{D}, \mathfrak{s})$ is a set $\{(i+j,\mathfrak{s}(j)) : j \in \mathcal{D}\}$, where $i \in \Z^d$ is the \emph{position} of the occurrence. 

While in local patterns we consider spins that are at a fixed distance from one another, in global patterns they can be as far as we want, as long as they have a certain global shape. Formally, we define a \emph{global pattern} $\tilde{\mathcal{P}}$ of size $m$ to be a pair $(\mathcal{O}, \mathfrak{s})$, where $\mathcal{O}=(\leq_1, \dots , \leq_d)$ is a $d$-tuple of total orders over $\{1, \dots , m\}$, and $\mathfrak{s}$ is a function $\{1, \dots , m\} \longrightarrow \{+,-\}.$
An \emph{occurrence} of $\tilde{\mathcal{P}}$ in a spin configuration $\omega$ is a set $\{x^{(1)}, \dots ,x^{(m)}\}$ of $m$ elements of $\Z^d$ such that there exists some ordering $(x^{(1)}, \dots , x^{(m)})$ of these elements such that
\begin{enumerate}
\item for all $i \in \{1, \dots , m\}$, $\sigma_{x^{(i)}}(\omega) = \mathfrak{s}(i),$
\item for all $i,j \in \{1, \dots , m\}$, for all $k \in \{1, \dots d \}$, $x^{(i)}_k \leq x^{(j)}_k$ if and only if $i \leq_k j.$
\end{enumerate}
For example, if $d=2$, $\leq_1, \leq_2$ are both the natural ordering and $\mathfrak{s}(i) =+$ for all $i$,
then the global pattern $(\mathcal{O}, \mathfrak{s})$ is a North-East chain of $m$ positive spins.

CLTs for local and global patterns in other structures than the Ising model have attracted attention in the literature. We mention Markov chains (see \cite{Regnier_Spankowski:CLTPattern,FlajoletValleePatterns,Valentin} and references therein),
patterns in random permutations (see \cite{BonaMonotonePatterns,JansonNakamuraZeilberger} for global patterns and
 \cite{GoldsteinConsecutifs,BonaMonotonePatterns,ElizaldeEtAlConsecutifs} for local patterns)
and arc configurations in random set-partitions (CLTs for the number of arcs of size $1$, which is a local pattern,
and the number of crossings, which is a global pattern, were given in \cite{CLT_SetPartitionsStatistics}).
Note that Markov chains are (discrete) one-dimensional Markov random fields,
while the random permutation model is a non-Markovian two-dimensional model
(when considering patterns, we think of permutations as permutation matrices).
Finding such CLT results in Markov random fields of dimension two or more, and in particular in the Ising model,
is therefore a natural problem.
\bigskip

We first prove a CLT for local patterns.
Let $S_{n,\mathcal{P}}$ denote 
the number of occurrences of a given local pattern $\mathcal{P}$ in $\Lambda_n$.
\begin{theorem}
\label{th:cltpower}
Consider the Ising model on $\Z^d$, with inverse temperature $\beta$ and magnetic field $h$, such that either $h > h_1(d)$ or $(h=0;\, \beta < \beta_1(d))$ or $(h=0;\, \beta > \beta_2(d))$. Let $\mathcal{P}$ be a local pattern. Then
$$\frac{S_{n,\mathcal{P}} - \mathbb{E}(S_{n,\mathcal{P}}) }{\sqrt{|\Lambda_n|}} \xrightarrow[n\to\infty]{d} \mathcal{N}(0, v_{\mathcal{P}}^2).$$
\end{theorem}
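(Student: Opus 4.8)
The plan is to deduce this CLT from the theory of weighted dependency graphs, exactly in the spirit announced in the introduction. First I would express $S_{n,\mathcal{P}}$ as a sum of indicator variables: for each position $i \in \Z^d$ such that the shifted domain $i + \mathcal{D}$ is contained in $\Lambda_n$, let $Y_{i}$ be the indicator that the pattern $\mathcal{P}$ occurs at position $i$, i.e.\ $\sigma_{i+j}(\omega) = \mathfrak{s}(j)$ for all $j \in \mathcal{D}$. Then $S_{n,\mathcal{P}} = \sum_i Y_i$, and each $Y_i$ is a fixed polynomial (indeed a monomial, up to sign normalisation, since $\mathbf{1}[\sigma = +] = \tfrac{1+\sigma}{2}$ and $\mathbf{1}[\sigma = -] = \tfrac{1-\sigma}{2}$) in the spins $\{\sigma_{i+j} : j \in \mathcal{D}\}$, with degree at most $|\mathcal{D}|$, the size of $\mathcal{P}$. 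The number of such positions is $|\Lambda_n|(1+o(1))$.

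Next I would invoke the stability property of weighted dependency graphs recalled in \cref{ssec:introWDG}: since $\WDep$ from \cref{th:depgraph} is a $\bm{C}$-weighted dependency graph for $\{\sigma_i : i \in \Z^d\}$, it induces — via the construction of Theorem~\ref{th:powers} — a weighted dependency graph $\WDep'$ for the family of monomials $\{Y_i\}$, whose maximal weighted degree is controlled. Concretely, the weight between $Y_i$ and $Y_{i'}$ is governed by $\max_{j,j' \in \mathcal{D}} \eps(d)^{\|(i+j)-(i'+j')\|_1 / 2}$, which for $\|i - i'\|_1$ large behaves like $\eps(d)^{(\|i-i'\|_1 - 2\diam(\mathcal{D}))/2}$; since $\mathcal{D}$ is fixed, summing these weights over $i'$ at fixed $i$ gives a bound uniform in $n$ (a geometric-type sum over $\Z^d$ of $\eps(d)^{\|i-i'\|_1/2}$ converges because $\eps(d)<1$). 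Hence the maximal weighted degree of $\WDep'$ is bounded by an absolute constant $\Delta$, uniformly in $n$.

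Then I would apply the general normal approximation theorem for weighted dependency graphs, \cref{th:4.11modif}, to the renormalised sum $\widetilde{X_n} = \tfrac{1}{\sqrt{|\Lambda_n|}}\bigl(S_{n,\mathcal{P}} - \esper(S_{n,\mathcal{P}})\bigr)$. The hypotheses to check are the growth condition on the renormalising factor $a_n = \sqrt{|\Lambda_n|}$ relative to the number of summands and the weighted degree bound — all of which hold since the number of summands is $\Theta(|\Lambda_n|) = \Theta(a_n^2)$ and $\Delta = O(1)$ — together with a lower bound ensuring the limiting variance $v_{\mathcal P}^2 := \lim_n \Var(\widetilde{X_n})$ exists and is (possibly) positive. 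The existence of this limit follows from translation-invariance of $\mu_{\beta,h}$: $\Cov(Y_i, Y_{i'})$ depends only on $i - i'$, these covariances are absolutely summable over $\Z^d$ by the exponential decay coming from $\WDep$, and a standard boundary-effect estimate then gives $\tfrac{1}{|\Lambda_n|}\Var(S_{n,\mathcal{P}}) \to v_{\mathcal{P}}^2 := \sum_{k \in \Z^d} \Cov(Y_0, Y_k)$. The conclusion is convergence in distribution to $\mathcal{N}(0, v_{\mathcal{P}}^2)$.

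I expect the main obstacle to be bookkeeping rather than conceptual: one must verify carefully that the general statements of \cite{Valentin} — the stability of weighted dependency graphs under taking monomials and the CLT \cref{th:4.11modif} — apply with the precise constants here, in particular that the induced weighted degree is genuinely bounded uniformly in $n$ (this needs the convergence of $\sum_{k\in\Z^d}\eps(d)^{\|k\|_1/2}$ and the fact that $\mathcal{D}$ is a fixed finite set, so the shift by $\diam(\mathcal{D})$ is harmless). A secondary point, also routine but worth stating, is that $v_{\mathcal P}^2$ may in principle vanish; the theorem as stated allows $v_{\mathcal P}^2 = 0$ (a degenerate Gaussian), so no nondegeneracy argument is strictly required, though one could remark when it is strictly positive.
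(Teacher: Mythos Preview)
Your proposal is correct and follows essentially the same route as the paper: express $S_{n,\mathcal{P}}$ as a sum of monomials $Z_i^{\mathcal{P}}$ in the signed spin variables, invoke the stability of weighted dependency graphs (\cref{th:powers}) to obtain a graph $\WDep_{\mathcal{P}}$ with uniformly bounded weighted degree (via the convergent sum $\sum_{k\in\Z^d}\eps^{\|k\|_1/2}$), establish the variance limit $v_{\mathcal{P}}^2=\sum_{k}\Cov(Z_0^{\mathcal{P}},Z_k^{\mathcal{P}})$ by translation invariance and summability (the paper isolates this as \cref{lem:ellispower}), and then verify the three hypotheses of \cref{th:4.11modif} with $a_n=\sqrt{|\Lambda_n|}$. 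Your closing remark that $v_{\mathcal{P}}^2$ may vanish and that the statement allows a degenerate limit matches the paper's own remark following the proof.
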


Similarly, if $S_{n,\GP}$ denotes the number of occurrences of a global pattern $\GP$ in $\Lambda_n$,
we have the following result.
\begin{theorem}
  \label{thm:GlobalPatterns}
Consider the Ising model on $\Z^d$, with inverse temperature $\beta$ and magnetic field $h$, such that either $h > h_1(d)$ or $(h=0;\, \beta < \beta_1(d))$ or $(h=0;\, \beta > \beta_2(d))$. Let $\GP$ be a global pattern of size $m$.
We assume that, for some positive constants $A$ and $\eta$
\begin{equation}
  \Var(S_{n,\GP}) \ge A n^{2m-2+\eta}.
  \label{eq:Hypo_Variance}
\end{equation}
Then
\[\frac{S_{n,\GP} - \mathbb{E}(S_{n,\GP}) }{\sqrt{\Var(S_{n,\GP})}} \xrightarrow[n\to\infty]{d} \mathcal{N}(0, 1).\]
\end{theorem}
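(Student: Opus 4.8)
The plan is to deduce \cref{thm:GlobalPatterns} from the general CLT machinery for weighted dependency graphs (\cref{th:4.11modif}), exactly as for \cref{th:cltpower}, the difference being that the normalisation is not the deterministic $n^{m-1}$ but the actual standard deviation, so the hypothesis \eqref{eq:Hypo_Variance} is needed to control the size of the maximal weighted degree relative to the variance. First I would write $S_{n,\GP}$ as a sum of monomials in the spins: an occurrence is an $m$-tuple $(x^{(1)},\dots,x^{(m)})$ in $\Lambda_n$ compatible with the orders $\O$, and its indicator $\prod_{k=1}^m \mathbf{1}[\sigma_{x^{(k)}} = \mathfrak{s}(k)]$ expands, via $\mathbf{1}[\sigma = \pm 1] = \tfrac{1\pm\sigma}{2}$, into a linear combination of monomials $\sigma_I = \prod_{a\in I}\sigma_a$ with $I$ ranging over sub-multisets of $\{x^{(1)},\dots,x^{(m)}\}$ of size at most $m$. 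Hence $S_{n,\GP}$ is, up to an additive constant, a linear combination of sums of monomials of bounded degree; by the stability property of weighted dependency graphs recalled in \cref{ssec:introWDG} (and spelled out in \cref{th:powers}), and by \cref{th:depgraph}, the family of these monomials admits a $\bm{C}'$-weighted dependency graph $\WDep'$ on the index set of occurrences, with weight between two occurrences $x$ and $y$ bounded by $\eps(d)^{\frac12 \min\{\dist{x^{(k)}}{y^{(l)}} : k,l\}}$ (the closest pair of sites governs the dependency).

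Next I would estimate the maximal weighted degree of $\WDep'$ restricted to $\Lambda_n$. Fix one occurrence $x$; summing the weight $\eps(d)^{\frac12 \dist{x^{(k)}}{y}}$ over all $y\in\Lambda_n$ at bounded distance and then over the remaining $m-1$ free coordinates of $y^{(\cdot)}$ inside $\Lambda_n$ gives a bound of order $n^{m-1}$ for the weighted degree (the geometric sum over the "anchored" coordinate is $O(1)$, each of the $m-1$ other coordinates contributes $O(n)$, and the sum over which site of $y$ is the anchor and which of $x$ it attaches to only costs a constant). So the maximal weighted degree $\Delta_n = O(n^{m-1})$. Meanwhile the number of summands is $\Theta(n^{md})$... — more precisely the relevant normalisation in \cref{th:4.11modif} compares $\Delta_n$ and $\Var(S_{n,\GP})$: one needs, roughly, that for every fixed $r$, $\Delta_n^{r-1} \big/ \Var(S_{n,\GP})^{r/2} \to 0$, together with a bound on each cumulant of the centred sum by (number of terms)$\cdot \Delta_n^{r-1}$ or similar. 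Using \eqref{eq:Hypo_Variance}, $\Var(S_{n,\GP}) \ge A n^{2m-2+\eta}$, so $\Var(S_{n,\GP})^{r/2} \ge A^{r/2} n^{(2m-2)\frac r2 + \frac{r\eta}{2}} = A^{r/2} n^{r(m-1)} n^{r\eta/2}$, which beats $\Delta_n^{r-1} = O(n^{(m-1)(r-1)})$ by a factor $n^{(m-1) + r\eta/2}\to\infty$ for every $r\ge 3$. Hence the hypotheses of \cref{th:4.11modif} are satisfied and we get asymptotic normality of $\big(S_{n,\GP}-\esper(S_{n,\GP})\big)/\sqrt{\Var(S_{n,\GP})}$, with limiting variance $1$ by construction.

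I expect the main obstacle to be bookkeeping rather than a conceptual difficulty: precisely verifying that the weighted dependency graph produced by the stability theorem has weighted degree $O(n^{m-1})$ in the box $\Lambda_n$ requires a careful multi-scale geometric sum — one must partition pairs of occurrences $(x,y)$ according to the "contact distance" $\min_{k,l}\dist{x^{(k)}}{y^{(l)}}$, observe that once this minimal distance and the pair of contact indices $(k,l)$ are fixed the positions of $y$ are constrained to lie within $O(n^{m-1})$ choices (since one coordinate block of $y$ is pinned near $x^{(k)}$ up to the contact distance and the geometric weight kills the tail), and sum the geometric series in the contact distance. A secondary point to be careful about is that \cref{th:4.11modif} may require the normalising sequence $a_n$ to satisfy a mild regularity/growth condition and an upper bound $\Var(S_{n,\GP}) = O(n^{2md})$ type control as well; the upper bound on the variance follows from the same weighted-degree estimate (covariance of two occurrences decays geometrically in contact distance), and \eqref{eq:Hypo_Variance} supplies the lower bound, so both sides are under control. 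Modulo these estimates, the theorem follows by a direct application of the black-box CLT for weighted dependency graphs.
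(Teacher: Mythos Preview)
Your approach is essentially the paper's: use \cref{th:powers} to get a weighted dependency graph for the occurrence indicators, bound the maximal weighted degree by $O(|\Lambda_n|^{m-1})$ via the geometric sum over the anchored site, set $a_n=\sqrt{\Var(S_{n,\GP})}$, and verify the hypotheses of \cref{th:4.11modif} using \eqref{eq:Hypo_Variance}. One simplification the paper makes that you can adopt: instead of expanding each indicator $\mathbf{1}[\sigma_{x^{(k)}}=\mathfrak s(k)]$ into a linear combination of $\sigma$-monomials, work directly with the variables $X_{(i,+)}=\tfrac{1+\sigma_i}{2}$ and $X_{(i,-)}=1-X_{(i,+)}$, so that $Z^{\GP}_{\{x^{(1)},\dots,x^{(m)}\}}=\prod_k X_{(x^{(k)},\mathfrak s(k))}$ is already a single monomial and the relevant graph is just the restriction of $\WDep_s^m$ to these vertices; also, when you check condition (3) of \cref{th:4.11modif}, don't forget the factor $(N_n/\Delta_n)^{1/s}\sim|\Lambda_n|^{1/s}$ in addition to $\Delta_n/a_n$ (the product is $|\Lambda_n|^{1/s-\eta/2}\to0$ for $s>2/\eta$), and note that condition (2) is automatic when $a_n$ is the standard deviation.
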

We do not have in general an estimate for the variance $\Var(S_{n,\GP})$.
However, when the pattern consists in positive spins only,
we can prove that \eqref{eq:Hypo_Variance} is satisfied (with $\eta=1$)
--- see \cref{prop:Variance_Lower_Bound} below.
The reverse inequality $\Var(S_{n,\GP}) \le B n^{2m-1}$ is always fulfilled
(see the proof of \cref{thm:GlobalPatterns}).
\medskip

We finish this introduction with a comparison with other methods.
Standard methods to get CLT in random fields are the use of mixing techniques \cite{neaderhouser1978}
or FKG inequalities \cite{Newman80,Newman83}.
It seems that the CLT for local patterns can be easily obtained with these methods.
Indeed, in an exponentially mixing field such as the Ising model,
if we denote $Z^{\mathcal P}_i$ the characteristic function of the occurrence
of $\mathcal P$ in position $i$ (see \eqref{eq:ZiP}),
then the field $(Z^{\mathcal P}_i)_{i \in \Z^d}$ is also exponentially mixing
and we can use the criterion given by Neaderhouser \cite[Section 3]{neaderhouser1978}.
CLT for functions of neighbouring spins are also accessible with methods based on FKG inequalities,
see \cite{Newman83} for a general result in this direction.
On the contrary, for global patterns, we do not know how to adapt these methods
to prove \cref{thm:GlobalPatterns}.
Indeed, if we denote $Z^{\GP}_{\{x_1,\dots,x_m\}}$ the characteristic function
of the occurrence of $\GP$ in position $\{x_1,\dots,x_m\}$ (see \eqref{eq:ZXGP}),
then $(Z^{\GP}_{\{x_1,\dots,x_m\}})$ is not a mixing field anymore.

The theory of weighted dependency graphs enables to deal with these different kinds of patterns in a
uniform way. In principle it would also be feasible to mix local and global conditions (as in vincular patterns for permutations \cite{Lisa});
besides the complexity of notation, a major difficulty is then to get
general estimates for the variance.

\subsection{Outline of the paper}
The remainder of the paper is organised as follows. In Section~\ref{sec:prelim}, we give some preliminary definitions and basic results about the Ising model, the theory of joint cumulants and weighted dependency graphs. In Section~\ref{sec:cluster}, we discuss the cluster expansion for the Ising model in the three different regimes we consider (high magnetic field, very high temperature, very low temperature) and deduce bounds on joint cumulants. In Section~\ref{sec:clt}, we use the theory of weighted dependency graphs to prove our central limit theorems.
\medskip

{\em Note:}
all constants throughout the paper depend on the dimension $d$ of the space and we shall not make it explicit from now on.

\section{Preliminaries}
\label{sec:prelim}
\subsection{The Ising model}
\label{sec:prelimIsing}
We consider the Ising model on a finite subset $\Lambda$ of $\Z^d.$ We use the notation of~\cite{Velenik}, that we define now.

Let $\EEE_{\Lambda} := \left\lbrace \{i,j\} \subset \Lambda : \dist{i}{j}=1 \right\rbrace$ be the set of nearest neighbour pairs in $\Lambda$.
(Here, and throughout the paper, $\dist{i}{j}$ denotes the graph distance in $\Z^d$ between two points $i$ and $j$.)
 To each spin configuration $\omega$, we associate its Hamiltonian
$$H_{\Lambda;\beta, h} (\omega):= -\beta \sum_{\{i,j\} \in \EEE_{\Lambda} } \sigma_i(\omega) \sigma_j(\omega) - h \sum_{i \in \Lambda} \sigma_i(\omega),$$
where $\beta \geq 0$ and $h$ are two real parameters,
respectively called {\em inverse temperature} and {\em magnetic field}.

The probability of a spin configuration $\omega$ is given by the Gibbs distribution
$$\mu_{\Lambda;\beta, h} (\omega):= \frac{1}{Z_{\Lambda;\beta, h}} e^{-H_{\Lambda;\beta, h} (\omega)},$$
where
$$Z_{\Lambda;\beta, h}:= \sum_{\omega \in \{-1,1\}^{\Lambda}} e^{-H_{\Lambda;\beta, h} (\omega)}$$
is called the partition function.

The quantities defined so far are with ``free boundary conditions'', which means that the value of the spins outside of $\Lambda$ is not taken into consideration. We can also define the same quantities with boundary condition, by considering the Ising model on the full lattice $\Z^d$, but where the values of the spins outside of $ \Lambda $ are fixed.
Fixing a spin configuration $\eta \in \{-1,1\}^{\Z^d}$, we define a spin configuration in $\Lambda$ with boundary condition $\eta$ as an element of the set
$$\Omega^{\eta}_{\Lambda} := \left\{ \omega \in \{-1,1\}^{\Z^d} : \omega_i = \eta_i, \forall i \notin \Lambda \right\}.$$
We now define the Hamiltonian as
$$H^{\eta}_{\Lambda;\beta, h} (\omega):= -\beta \sum_{\{i,j\} \in \EEE^{b}_{\Lambda} } \sigma_i(\omega) \sigma_j(\omega) - h \sum_{i \in \Lambda} \sigma_i(\omega),$$
where $\EEE^{b}_{\Lambda} := \left\lbrace \{i,j\} \subset \Z^d : \dist{i}{j}=1 \text{ and } \{i,j\} \cap \Lambda \neq \emptyset \right\rbrace .$

The Gibbs distribution of the Ising model in $\Lambda$ with boundary condition $\eta$ and parameters $\beta$ and $h$ is the probability distribution defined on $\Omega^{\eta}_{\Lambda}$ by
$$\mu^{\eta}_{\Lambda;\beta, h} (\omega):= \frac{1}{Z^{\eta}_{\Lambda;\beta, h}} e^{-H^{\eta}_{\Lambda;\beta, h} (\omega)},$$
where
$$Z^{\eta}_{\Lambda;\beta, h}:= \sum_{\omega \in \Omega^{\eta}_{\Lambda}} e^{-H^{\eta}_{\Lambda;\beta, h} (\omega)}$$
is the partition function with boundary condition $\eta$.

The most classical boundary conditions are the $+$ boundary condition, where $\eta_i = +1$ for all $i \in \Z^d$,
and the $-$ boundary condition, where $\eta_i = -1$ for all $i \in \Z^d$. When considering quantities with $+$ (resp. $-$) boundary condition, we write them with superscript $+$ (resp. $-$), e.g. $\mu^{+}_{\Lambda;\beta, h} (\omega).$

We now take an increasing sequence $\Lambda_n$ of finite subsets of $\Z^d$
with $\bigcup_{n \ge 1} \Lambda_n=\Z^d$.
It is a well-known fact (see, {\em e.g.}, \cite[Chapter 3]{Velenik})
that the sequence of measures $\mu^{+}_{\Lambda_n;\beta, h}$ converges in the weak sense
towards a measure denoted $\mu^+_{\beta, h}$ as $n \rightarrow \infty$.
In the high temperature case ($\beta < \beta_c(d),\, h=0$) or in the presence of a magnetic field 
($h \ne 0$), the limiting measure is independent of the choice of boundary conditions.
At low temperature ($\beta > \beta_c(d),\, h=0$), the limiting measure depends on the boundary conditions;
in this article, we restrict ourselves to $+$ boundary conditions
to have a well-defined limiting measure in all cases.
Also, we drop the superscript $+$ and denote the limiting measure by $\mu_{\beta,h}$.

In this article, we work with this limiting measure $\mu_{\beta,h}$
and prove our central limit theorem under this measure.
In comparison with the measure $\mu^{+}_{\Lambda_n;\beta, h}$, it has the advantage to be translation
invariant, which simplifies in particular the variance estimates.

\subsection{Joint cumulants}
\label{sec:cumulants}
As usual in the physics literature (and as done in the introduction), we use the notation $\langle f \rangle$
for the expectation of $f(X)$ (adding parameters of the measure as indices if necessary). 
For random variables $X_1,\dots,X_r$ on the same probability space with finite moments,
we define their {\em joint cumulant} (or {\em mixed cumulant}) as
\begin{equation}
    \kappa (X_1,\dots,X_r) = [t_1 \dots t_r] \log 
\big\langle \exp(t_1 X_1 + \dots + t_r X_r) \big\rangle.
    \label{EqDefCumulant}
\end{equation}
The notation $[t_1 \dots t_r] F$ stands here for the coefficient of $t_1 \dots t_r$ 
in the series expansion of $F$ in positive powers of $t_1, \dots, t_r$.
The finite moments assumption ensures that this series expansion exists.
If all random variables $X_1,\cdots,X_r$ are equal to the same variable $X$,
we denote $\kappa_r(X)=\kappa(X,\dots,X)$ and this is the usual {\em cumulant}
of a single random variable.
\medskip

Joint cumulants have a long history in statistics
and theoretical physics, see {\em e.g.} \cite{UrsellCumulants}.
In the case where the $X_i$'s are indicator functions of the presence of particles (or $+$ spins for examples),
they are often referred to in the statistical physics literature as {\em truncated correlation functions}
or {\em Ursell functions}.

\subsection{Spanning trees of maximal weight and  tree lengths}
\label{Subsect:Spanning}
\begin{definition}
A spanning tree of a graph $\Dep=(V,E)$ is
a subset $E'$ of $E$ such that $(V,E')$ is a tree.
\end{definition}
If $\WDep$ is an edge-weighted graph,
we define the weight $w(T)$ of a spanning tree of $\WDep$
as the {\em product} of the weights of the edges in $T$.
The maximum weight of a spanning tree of $\WDep$
is denoted $\MWST{\WDep}$.

We will be mainly interested in the case where $V$ is a finite subset $A$ of $\Z^d$,
$E$ consists of all pairs of vertices and
the weights are of the form $w(i,j)=\eps^{\dist{i}{j}}$ for some positive constant $\eps<1$.
We denote this weighted graph by $G_A$.
Then, for a spanning tree $T$ of $G_A$,
\[w(T) = \eps^{\sum_{(i,j) \in T} \dist{i}{j}},\]
and the maximal such weight  $\MWST{G_A}$ is obtained by {\em minimizing} the quantity $\sum_{(i,j) \in T} \dist{i}{j}$.
Therefore we define
\[\ell'_T(A) = \min_{T} \sum_{(i,j) \in T} \dist{i}{j}, \]
where the minimum is taken over all spanning trees $T$ of $G_A$, {\em i.e.} of the complete graph on $A$.
Then we have
$\MWST{G_A} = \eps^{\ell'_T(A)}$.

The quantity $\ell'_T(A)$ is sometimes referred to as the {\em tree-length} of $A$.
There is another closely related notion of tree-length defined
as $\ell_T(A)=\min_B \, \ell'_T(A \cup B)$, where the minimum is taken over all finite subsets $B$ of $\Z^d$.
In other words, this is the minimum length of a tree connecting vertices of $A$ and possibly other vertices of $\Z^d$.
Equivalently, this is the minimal size of a connected set of edges of the lattice $\Z^d$
such that each vertex of $A$ is incident to at least one edge in the set.
These two notions of tree-length are illustrated on \cref{fig:tree_length}.

\begin{figure}[ht]
\includegraphics[width=0.8\textwidth]{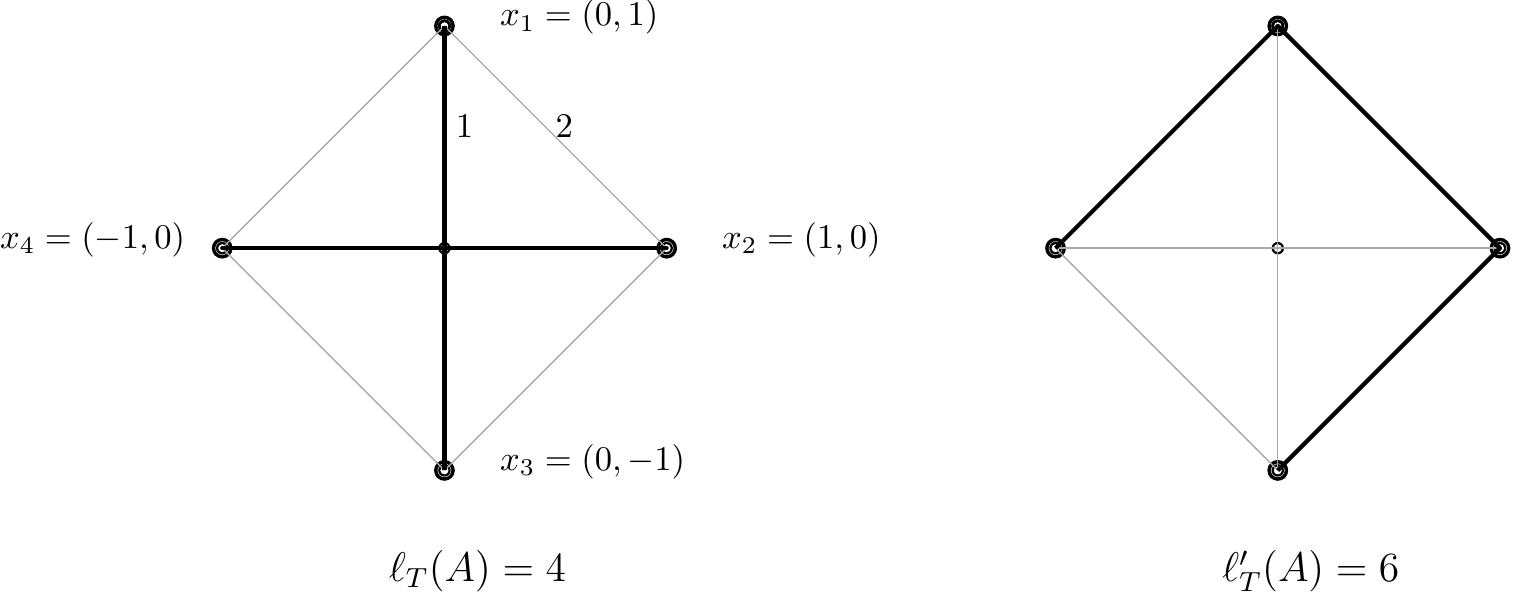}
\caption{The two tree lengths on an example}
\label{fig:tree_length}
\end{figure}

In~\cite[page 197]{Duneau1}, Duneau, Iagolnitzer and Souillard proved the following bound, 
which will be useful later in our computations.

\begin{proposition}
\label{prop:2longueurs}
For all $A=\{x_1,\dots,x_n\}$ finite subset of $\Z^d$, we have
$$\ell_T(A) \leq \ell'_T(A) \leq 2 \ell_T(A).$$
\end{proposition}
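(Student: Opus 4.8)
The plan is to prove the two inequalities separately. The left-hand inequality $\ell_T(A) \le \ell'_T(A)$ is essentially a matter of unwinding the definition, whereas the right-hand inequality $\ell'_T(A) \le 2\,\ell_T(A)$ is a doubling argument of the kind used in the metric travelling salesman problem: starting from an optimal tree realising $\ell_T(A)$, one produces via a depth-first (Euler) tour a spanning tree of the complete graph on $A$ whose weight is at most twice as large.

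For $\ell_T(A) \le \ell'_T(A)$, recall that $\ell_T(A) = \min_B \ell'_T(A\cup B)$ over finite $B \subset \Z^d$; since $B = \emptyset$ is admissible, $\ell_T(A) \le \ell'_T(A)$ at once. Equivalently, using the lattice-edge description of $\ell_T(A)$: taking a minimum-weight spanning tree $T$ of the complete graph on $A$ and replacing each edge $(i,j)\in T$ by a lattice geodesic from $i$ to $j$ (which uses $\dist{i}{j}$ edges) yields a connected set of lattice edges incident to every point of $A$, of cardinality at most $\sum_{(i,j)\in T}\dist{i}{j} = \ell'_T(A)$.

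For $\ell'_T(A) \le 2\,\ell_T(A)$ we may assume $|A| \ge 2$, the case $|A| \le 1$ being trivial. Fix a connected set $S$ of lattice edges, incident to every point of $A$, with $|S| = \ell_T(A)$. Such a minimal $S$ is necessarily a tree, say $\mathcal{T}$: if $S$ contained a cycle, deleting a cycle edge would leave a connected set still incident to all of $A$ (a point of $A$ of degree $1$ is a leaf, hence lies on no cycle), contradicting minimality of $|S|$. Since every point of $A$ is then a vertex of $\mathcal{T}$, fix $x \in A$ and perform a depth-first traversal of $\mathcal{T}$ started at $x$. This is a closed walk in $\Z^d$ that crosses each of the $\ell_T(A)$ edges of $\mathcal{T}$ exactly twice, hence has length $2\,\ell_T(A)$, and visits every vertex of $\mathcal{T}$. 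List the points of $A$ as $x = x_{\pi(1)}, x_{\pi(2)}, \dots, x_{\pi(n)}$ in order of first appearance along the walk; cutting the walk at the first occurrences of $x_{\pi(2)}, \dots, x_{\pi(n)}$ splits it into consecutive sub-walks $W_1, \dots, W_n$ with $\sum_{k=1}^n \mathrm{len}(W_k) = 2\,\ell_T(A)$, where $W_k$ runs from $x_{\pi(k)}$ to $x_{\pi(k+1)}$ (with the convention $x_{\pi(n+1)} := x_{\pi(1)}$, so that $W_n$ closes the tour).

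Because $W_k$ is a walk between $x_{\pi(k)}$ and $x_{\pi(k+1)}$ in the lattice, $\dist{x_{\pi(k)}}{x_{\pi(k+1)}} \le \mathrm{len}(W_k)$. Hence the path $x_{\pi(1)} - x_{\pi(2)} - \cdots - x_{\pi(n)}$ in the complete graph on $A$ is a spanning tree whose weight is $\sum_{k=1}^{n-1}\dist{x_{\pi(k)}}{x_{\pi(k+1)}} \le \sum_{k=1}^{n}\mathrm{len}(W_k) = 2\,\ell_T(A)$, which gives $\ell'_T(A) \le 2\,\ell_T(A)$. I do not expect a genuine obstacle here: the only points requiring a little care are bookkeeping ones — checking that pruning $S$ to a tree never isolates a point of $A$, and that the sub-walks $W_1,\dots,W_n$ tile the depth-first tour exactly once (which is why it is convenient to start the tour at a point of $A$). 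The factor $2$ is precisely the Euler-tour overhead.
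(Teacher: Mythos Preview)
Your proof is correct. The paper does not actually prove this proposition: it is quoted from Duneau--Iagolnitzer--Souillard \cite[p.~197]{Duneau1} without argument, so there is no ``paper's own proof'' to compare against. Your depth-first/Euler-tour doubling argument is the standard one (and is essentially the classical $2$-approximation for metric TSP); the bookkeeping points you flag --- that a minimal connected edge-set $S$ with $A$ incident must be a tree, and that starting the tour at a point of $A$ makes the sub-walk decomposition clean --- are handled correctly.
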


\subsection{Weighted dependency graphs}
Weighted dependency graphs have been introduced by the second author in \cite{Valentin}.
The following is a simplified definition, sufficient for the purpose of this paper
(it corresponds to the case $\Psi \equiv 1$ of the general definition, given in \cite{Valentin}).
\begin{definition}
\label{Def:WDG}
Let $\{Y_\a,\a \in A\}$ be a family of random variables with finite moments,
living in the same probability space;
and let $\bC=(C_1,C_2,\cdots)$ be a sequence of positive real numbers.

A weighted graph $\WDep$ is a {\em $\bC$-weighted dependency graph}
for $\{Y_\a,\a \in A\}$ if, 
for any multiset \hbox{$B=\{\a_1,\ldots,\a_r\}$} of elements of $A$,
one has
\begin{equation}
    \bigg| \ka\big( Y_\a ; \a \in B \big) \bigg| \le
    C_r \, \MWST{\WDep[B]}. 
    \label{EqFundamental}
\end{equation}
Here $\WDep[B]$ denotes the graph induced by $\WDep$ on the vertex set $B$.
\end{definition}

Weighted dependency graphs are a toolbox to prove central limit theorems.
Here is a normality criterion, which is a slightly
modified version of the main theorem in~\cite{Valentin}.

\begin{theorem}
\label{th:4.11modif}
Suppose that, for each $n$, $\{ Y_{n,i}, 1 \leq j \leq N_n\}$ is a family of random variables with finite moments defined on the same probability space. 
Let $\mathbf{C} = (C_r)_{r \geq 1}$ be a fixed sequence that does not depend on $n$.

Assume that, for each $n$, one has a $\mathbf{C}$-weighted dependency graph $\WDep_n$ for $\{ Y_{n,i}, 1 \leq j \leq N_n\}$ 
and denote $\Delta_n-1$ its maximal weighted degree.

Let $X_n = \sum_{i=1}^{N_n} Y_{n,i}$ and $v^2_n = Var (X_n)$.
Assume that there exists a sequence $(a_n)$, an integer $s \ge 3$ and a real number $v$ such that

\begin{enumerate}
\item $\displaystyle \frac{v_n^2}{a_n^2} \xrightarrow[n \to\infty]{} v^2,$
\item $\displaystyle \text{for all $n$, } a_n^2 \leq C_2 N_n \Delta_n,$
\item $\displaystyle \left( \frac{N_n}{\Delta_n} \right) ^{\frac{1}{s}} \frac{\Delta_n}{a_n} \xrightarrow[n\to\infty]{} 0.$
\end{enumerate}

Then in distribution,
$$\frac{X_n - \mathbb{E}(X_n)}{a_n} \xrightarrow[n\to\infty]{d} \mathcal{N}(0, v^2).$$
\end{theorem}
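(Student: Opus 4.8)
The plan is to derive \cref{th:4.11modif} from the central limit theorem of \cite{Valentin} by the method of cumulants (semi-invariants), in the spirit of \cite{JansonDependencyGraphs}; the only real difference from \cite{Valentin} is that we renormalise by an essentially arbitrary sequence $(a_n)$ subject only to (1)--(3), rather than by the exact standard deviation of $X_n$, and one has to check that this changes nothing. Set $Z_n := \bigl(X_n-\mathbb{E}(X_n)\bigr)/a_n$, so that $\mathbb{E}(Z_n)=0$ and, by (1), $\Var(Z_n)=v_n^2/a_n^2\to v^2$. If $v=0$ then $Z_n\to 0$ in $L^2$, hence in distribution towards $\mathcal{N}(0,0)$, and we are done; so assume $v>0$. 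Since the $Y_{n,i}$ have finite moments so does $Z_n$, and the $K$-th moment of $Z_n$ is a universal polynomial in $\kappa_1(Z_n),\dots,\kappa_K(Z_n)$; thus, once we know $\kappa_r(Z_n)\to 0$ for every fixed $r\ge 3$, we get $\mathbb{E}(Z_n^K)\to\mathbb{E}\bigl(\mathcal{N}(0,v^2)^K\bigr)$ for every $K$, and since $\mathcal{N}(0,v^2)$ is determined by its moments this yields $Z_n\xrightarrow{d}\mathcal{N}(0,v^2)$.

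The core input is a bound on the joint cumulants of $X_n$ coming from the weighted dependency graph. By multilinearity, $\kappa_r(X_n)=\sum_{i_1,\dots,i_r}\kappa(Y_{n,i_1},\dots,Y_{n,i_r})$, the sum being over all $r$-tuples from $\{1,\dots,N_n\}$, and \eqref{EqFundamental} bounds each summand by $C_r\,\MWST{\WDep_n[\{i_1,\dots,i_r\}]}$, i.e. by $C_r$ times a maximal product of edge-weights along a spanning tree of the distinct indices occurring. To sum these one groups the $r$-tuples by the pattern of coincidences of their coordinates; for a group with $m\le r$ distinct indices only a spanning tree on those indices matters, and there are at most $m^{m-2}$ such trees; for a fixed tree shape one sums over positions by rooting the tree and peeling leaves one by one, each peeled leaf costing a factor at most $\Delta_n-1$ (the maximal weighted degree) and the root being chosen among $N_n$ positions. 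Collecting everything gives $|\kappa_r(X_n)|\le D_r\,N_n\,\Delta_n^{\,r-1}$ with $D_r$ depending only on $r$ and $C_1,\dots,C_r$ --- this enumerative estimate is the combinatorial heart of the argument, and it is exactly what is proved in \cite{Valentin}. Dividing by $a_n^r$, $|\kappa_r(Z_n)|\le D_r\,N_n\,\Delta_n^{\,r-1}/a_n^r$.

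It remains to feed (2)--(3) into this estimate to get $\kappa_r(Z_n)\to 0$ for each $r\ge 3$ (the same cumulant bounds in fact yield a Berry--Esseen-type rate). Heuristically, (3) says that $\Delta_n/a_n\to 0$ fast enough --- measured against $N_n/\Delta_n$ --- that the surplus powers of $\Delta_n/a_n$ in $N_n\Delta_n^{\,r-1}/a_n^r$ kill the factor $N_n/\Delta_n$, while (2) keeps $a_n$ from growing past $\sqrt{N_n\Delta_n}$ so that the normalisation is compatible with (1); together they are precisely the inequalities isolated by Janson under which such renormalised cumulants vanish. Plugging this into the first paragraph proves the theorem. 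The main obstacle is thus not conceptual but technical: the tree-enumeration cumulant bound of the second paragraph (which we may simply cite from \cite{Valentin}), after which the only genuinely new point is to notice that nowhere did the argument use $a_n=\sqrt{\Var(X_n)}$ --- conditions (1)--(3) are tailored so that any admissible renormalising sequence works without changing a line.
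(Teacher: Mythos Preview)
Your proposal is correct and takes essentially the same approach as the paper: both defer to the normality criterion in \cite{Valentin} (based on the cumulant bound $|\kappa_r(X_n)|\le D_r N_n\Delta_n^{r-1}$ and the method of moments) and observe that nothing in that argument requires the normalising sequence to be the exact standard deviation, so $\sigma_n$ may be replaced by any $a_n$ satisfying (1)--(3). The paper's proof says exactly this in two sentences, additionally noting that in the special case $\Psi\equiv 1$ the quantities $R_n,Q_n$ of \cite{Valentin} reduce to $N_n,\Delta_n$; your version simply unpacks more of the machinery from \cite{Valentin} and \cite{JansonDependencyGraphs}.
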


\begin{proof}
The proof is almost identical to the proof of the normality criterion in \cite[Section 4.3]{Valentin} replacing $\sigma_n$ by $a_n$.
Indeed, as noticed in \cite[Section 4.3]{Valentin}, in the special case $\Psi \equiv 1$ (to which we restrict ourselves in this article),
the quantities $R_n$ and $Q_n$ defined there can be replaced respectively by $N_n$ (the number of vertices)
and $\Delta_n$ (the maximal weighted degree plus one).
\end{proof}

\section{Cluster expansions and bounds on joint cumulants}
\label{sec:cluster}

The cluster expansion is a powerful tool in statistical mechanics, which consists in studying a system in terms of macroscopic geometrical objects instead of considering its original microscopic components. It was introduced in a work of Mayer and Montroll~\cite{Mayer} studying molecular distribution and has since been used in several other topics;
for the Ising model, see for example~\cite{Farrell} or Chapter $5$ of~\cite{Velenik}. In this section, we will use the cluster expansion in three different regimes of the Ising model to prove the bounds on joint cumulants of Theorem~\ref{thm:bound_joint_cumulants}. This will later be useful to apply the theory of weighted dependency graphs.
Theorem~\ref{thm:bound_joint_cumulants} is proved in \cref{SectBoundHT,SectBoundLT,SectBoundMF},
depending on the considered regime.

\begin{remark}
  As already mentioned, cluster expansion is a key step to obtain bounds for cumulants
  in each of the three regimes.
  Therefore we will use each time classical notation for cluster expansion,
  such as $\Xi$, $\wt$, \dots
  Note however that these quantities may have different meanings in different regimes.
  Since they are only used for the proof of \cref{thm:bound_joint_cumulants}
  and since the proofs in the different regimes are independent from each other,
  this should not create any difficulty.
\end{remark}

\subsection{At very high temperature, without magnetic field}
\label{Sect:CE_HT}

\subsubsection{The cluster expansion of the (multivariate) moment generating function}
Let us start with the regime where $h=0$ and $\beta$ is sufficiently small (very high temperature).

Fix a finite domain $\Lambda \subset \Z^d$ and let $A=\{x_1,\dots,x_r\}$ be a set of points in $\Lambda$.
We consider the (multivariate) moment generating function
\[ \left\langle \exp\left(\sum_{j=1}^r t_j \sigma_{x_j} \right) \right\rangle_{\Lambda;\beta,0} = 
\frac{\sum_{\omega \in \Omega_{\Lambda}} 
\exp\left(\sum_{j=1}^r t_j \sigma_{x_j}(\omega) \right) e^{-H_{\Lambda;\beta,0}(\omega)}}{Z_{\Lambda; \beta,0}}.\]

Let us call $Z^A_{\Lambda; \beta,0}$ the numerator of the right-hand side.
The denominator $Z_{\Lambda; \beta,0}$ is then exactly $Z^\emptyset_{\Lambda; \beta,0}$.
Let $\mathcal{E}^{\text{even}}_{A}$ (resp. $\mathcal{E}^{\text{even}}_{\Lambda,A}$) be the set of
pairs $(E,B)$, where $E \subset \EEE_{\Z^d}$ (resp. $E \subset \EEE_\Lambda$) and $B\subseteq A$ are such that
a vertex of $\Lambda$ is incident to an odd number of edges in $E$ if and only if it is in $B$.
For such a pair $(E,B)$, we denote $\wt(E,B)=(\tanh\, \beta)^{|E|} \prod_{j \in B} (\tanh\, t_j)$.

\begin{lemma}
  [high temperature representation]
We have
\begin{equation}
Z^A_{\Lambda; \beta,0} = 2^{|\Lambda|} (\cosh\, \beta)^{|\EEE_\Lambda|}
\left(\prod_{j=1}^r \cosh(t_j) \right) \Xi^A_{\Lambda;\beta,0},
\label{eq:PartFunction_HighTemperature_Modified}
\end{equation}
where $\Xi^A_{\Lambda;\beta,0}=\sum_{(E,B) \in \mathcal{E}^{\text{even}}_{\Lambda,A}} \wt(E,B)$.
  \label{lem:HTR}
\end{lemma}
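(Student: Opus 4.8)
The plan is to establish the high-temperature representation by the classical trick of expanding each edge factor $e^{\beta \sigma_i \sigma_j}$ and each source factor $e^{t_j \sigma_{x_j}}$ using the identity $e^{x \sigma} = \cosh(x) + \sigma \sinh(x) = \cosh(x)\bigl(1 + \sigma \tanh(x)\bigr)$, valid since $\sigma \in \{-1,1\}$. First I would write
\[
Z^A_{\Lambda;\beta,0} = \sum_{\omega \in \Omega_\Lambda} \Bigl(\prod_{\{i,j\} \in \EEE_\Lambda} e^{\beta \sigma_i(\omega)\sigma_j(\omega)}\Bigr)\Bigl(\prod_{j=1}^r e^{t_j \sigma_{x_j}(\omega)}\Bigr),
\]
then substitute the $\cosh$–$\tanh$ factorization in each of the $|\EEE_\Lambda| + r$ factors, pulling out the global prefactor $(\cosh\beta)^{|\EEE_\Lambda|}\prod_{j=1}^r \cosh(t_j)$. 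What remains is a product over edges of $\bigl(1 + \sigma_i\sigma_j \tanh\beta\bigr)$ times a product over sources of $\bigl(1 + \sigma_{x_j}\tanh(t_j)\bigr)$.

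Next I would expand that product of binomials: each term in the expansion is indexed by a choice of a subset $E \subseteq \EEE_\Lambda$ of edges (those where we pick the $\sigma_i\sigma_j\tanh\beta$ term) together with a subset $B \subseteq A$ of sources (those where we pick the $\sigma_{x_j}\tanh(t_j)$ term). Such a term contributes
\[
(\tanh\beta)^{|E|}\Bigl(\prod_{j \in B}\tanh(t_j)\Bigr)\sum_{\omega \in \Omega_\Lambda}\ \prod_{i \in \Lambda} \sigma_i(\omega)^{m_i(E,B)},
\]
where $m_i(E,B)$ is the number of edges of $E$ incident to $i$ plus one if $i \in B$. The key combinatorial observation is that $\sum_{\omega}\prod_i \sigma_i^{m_i}$ equals $2^{|\Lambda|}$ if every $m_i$ is even and $0$ otherwise, because $\sum_{\sigma \in \{-1,1\}}\sigma^m$ is $2$ for $m$ even and $0$ for $m$ odd, and the sum over $\Omega_\Lambda$ factorizes over sites. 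The condition ``$m_i(E,B)$ even for all $i$'' is precisely the defining condition of $\mathcal{E}^{\text{even}}_{\Lambda,A}$: a vertex is incident to an odd number of edges of $E$ iff it lies in $B$. Collecting the surviving terms yields exactly $2^{|\Lambda|}\sum_{(E,B) \in \mathcal{E}^{\text{even}}_{\Lambda,A}}\wt(E,B)$, and combining with the prefactor gives \eqref{eq:PartFunction_HighTemperature_Modified}.

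I expect the argument to be essentially routine; the only points requiring a little care are bookkeeping ones rather than genuine obstacles. The first is making sure the parity condition is stated for the correct vertex set: the edges in $E \subseteq \EEE_\Lambda$ have both endpoints in $\Lambda$ and the sources $x_j$ lie in $\Lambda$, so the condition is automatically about vertices of $\Lambda$, matching the definition of $\mathcal{E}^{\text{even}}_{\Lambda,A}$ (as opposed to $\mathcal{E}^{\text{even}}_A$, which uses edges of $\EEE_{\Z^d}$). The second is handling the interchange of the finite sum over $\omega$ with the finite product expansion, which is harmless since everything is a finite polynomial identity in $\tanh\beta$ and the $\tanh(t_j)$. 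One could equally phrase the whole computation as an identity of polynomials, which sidesteps any convergence worry entirely.
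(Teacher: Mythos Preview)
Your proof is correct and follows essentially the same approach as the paper: both use the identity $e^{x\sigma}=\cosh(x)(1+\sigma\tanh x)$ for $\sigma\in\{-1,1\}$, expand the resulting product over subsets $E\subseteq\EEE_\Lambda$ and $B\subseteq A$, and then observe that the spin sum vanishes unless each $\sigma_i$ appears with even multiplicity, yielding $2^{|\Lambda|}$ exactly on $\mathcal{E}^{\text{even}}_{\Lambda,A}$. Your additional remarks on bookkeeping (matching the parity condition to the correct edge set $\EEE_\Lambda$, and the finiteness of all sums) are accurate and add welcome clarity.
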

\begin{proof}
  This proof is a straight-forward extension of the case $A=\emptyset$, see e.g.  \cite[Eq. (5.40)]{Velenik}.
  We write in short $\sigma_i$ for $\sigma_i(\omega)$.
  Since $\sigma_{x_j}$ is in $\{-1,+1\}$, we can write 
  \begin{align*}
    \exp(t_j \sigma_{x_j}) &= \cosh(t_j) + \sigma_{x_j} \sinh(t_j) 
    = \cosh(t_j) \big(1 + \sigma_{x_j} \tanh(t_j));\\
    \exp(\beta \sigma_i \sigma_j) &= \cosh(\beta) \big(1 + \sigma_i \sigma_j \tanh(\beta) \big).
  \end{align*}
  This gives the following expression for $Z^A_{\Lambda; \beta,0}$:
  \[
    Z^A_{\Lambda; \beta,0}= (\cosh\, \beta)^{|\EEE_\Lambda|}
  \left(\prod_{j=1}^r \cosh(t_j) \right) \sum_{\omega \in \Omega_{\Lambda}}
  \left[ \sum_{E \subseteq \EEE_\Lambda} \prod_{\{i,j\} \in E} (\sigma_i \sigma_j \tanh(\beta)) \right]\\
  \cdot\left[ \sum_{B \subseteq A} \prod_{x_j \in B} (\sigma_{x_j} \tanh(t_j)) \right].
  \]
  Changing the order of summation we should evaluate, for $E \subseteq \EEE_\Lambda$ and $B \subseteq A$, the quantity
  \[ \sum_{\omega \in \Omega_{\Lambda}}  \prod_{\{i,j\} \in E} 
  (\sigma_i \sigma_j) \prod_{x_j \in B} (\sigma_{x_j}). \]
  By an easy symmetry argument, this sum is zero unless all $\sigma_i$'s appear an even number of times, which corresponds to the condition $(E,B) \in \mathcal{E}^{\text{even}}_{\Lambda,A}$.
  In this latter case, the sum is the number of spin configurations $|\Omega_{\Lambda}|=2^{|\Lambda|}$.
  This ends the proof of the high temperature expansion.
\end{proof}

Pairs $(E,B)$ in  $\mathcal{E}^{\text{even}}_{\Lambda,A}$ can be considered as subgraphs of $\Lambda$,
where the vertices are all vertices incident to an edge of $E$
and the edge-set is precisely $E$ (vertices in $B$ must be adjacent to at least one edge in $E$).
This graph has a unique decomposition (up to reordering) into $r$ connected components,
each again being the graph of some $(E_i,B_i)$ (for $1 \le i \le r$).
Notice that the weight function is multiplicative with respect to connected components,
{\em i.e.} $\wt(E,B)=\prod_i \wt(E_i,B_i)$.
Therefore using notation of \cite{Velenik},
\[ \Xi^A_{\Lambda;\beta,0} = 1 + \sum_{r \ge 1} \frac{1}{r!} \sum_{(E_1,B_1), \dots, (E_r,B_r) \atop \text{connected}}
\ \prod_{i=1}^r \wt(E_i,B_i) \, \prod_{1 \le i<j \le r} \delta\big[(E_i,B_i),(E_j,B_j) \big], \]
where 
$\delta\big[(E_i,B_i),(E_j,B_j) \big]=1$ if the graphs corresponding to $(E_i,B_i)$
and $(E_j,B_j)$ do not share a vertex, and $0$ otherwise
(this factor encodes the fact that connected components should not intersect).
We set $\zeta((E,B),(E',B'))=\delta\big[(E,B),(E',B')\big]-1$
as usual for cluster expansions.

To compute cumulants, we need an expansion of 
$\log\left(\left\langle \exp\left(i \sum_{j=1}^r t_j \sigma_{x_j} \right) \right\rangle_{\Lambda;\beta,0} \right)$
and thus of $\log(\Xi^A_{\Lambda;\beta,0})$.
Such an expansion will be given by cluster expansion, but we should first check some conditions
ensuring convergence, {\em e.g.} the ones given in \cite[Section 5.4]{Velenik}.
We let $\overline{\wt}(E,B)=(\tanh\, \beta)^{|E|}$,
which dominates all functions $\wt(E,B)$ when the $t_j$'s are complex parameters
of moduli at most $\tanh^{-1}(1)$.
\begin{lemma}\label{lem:Cv_CE_HighTemperature}
For $(E,B) \in \mathcal{E}^{\text{even}}_{\Lambda,A}$, let $a(E;B)=|V(E,B)|$,
that is the number of vertices in the graph associated to $(E,B)$.
Then, there exists a constant $\beta^{\text{ht}}_{\text{ce}}(d)$ such that the following holds for $\beta<\beta^{\text{ht}}_{\text{ce}}(d)$:
\begin{enumerate}
\item for any $\Lambda \subset \Z$, we have $\displaystyle \qquad \sum_{(E,B) \in \mathcal{E}^{\text{even}}_{\Lambda,A}} |\overline{\wt}(E,B)| e^{a(E,B)}  < \infty ; $
\item \label{item:deux} for each fixed pair $(E_\star,B_\star)$ where $E_\star$ is a finite subset of $\mathcal{E}_{\Z^d}$
  and $B^\star \subseteq A$, one has
\[ S_{(E_\star,B_\star)}:=\sum_{(E,B) \in \mathcal{E}^{\text{even}}_{A} \atop (E,B) \text{ connected}} |\overline{\wt}(E,B)| e^{a(E,B)}
|\zeta\big[(E,B),(E_\star,B_\star)\big]| \le a(E_\star,B_\star).\]
\end{enumerate}
\end{lemma}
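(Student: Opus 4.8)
The plan is to verify the two standard Kotecký--Preiss-type convergence conditions by brute-force enumeration of connected even subgraphs, exploiting that $\overline{\wt}(E,B) = (\tanh \beta)^{|E|}$ is small when $\beta$ is small. The key point is that for a connected pair $(E,B)$, the number of vertices $a(E,B)$ is at most $|E|+1$, so a weight of $(\tanh\beta)^{|E|}$ can absorb the factor $e^{a(E,B)} \le e \cdot e^{|E|}$ provided $\tanh\beta$ is smaller than roughly $e^{-1}$ divided by the branching factor for connected edge-subsets of $\Z^d$.

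For part (1): since every pair $(E,B) \in \mathcal{E}^{\text{even}}_{\Lambda,A}$ decomposes into at most $r = |A|$ connected components, it suffices (by multiplicativity of $\overline{\wt}$ and subadditivity of $a$) to bound the sum over \emph{connected} $(E,B)$ rooted at a fixed vertex, and then sum over the finitely many root choices in $\Lambda$ — wait, $\Lambda$ is finite so this is trivially finite; the real content is a \emph{uniform} bound, which is what part (2) delivers and which is what one actually uses. So I would first record that (1) is immediate for finite $\Lambda$ (each summand is finite and there are finitely many of them), the substantive statement being (2). First I would fix $(E_\star,B_\star)$ with vertex set $V_\star$ of size $a_\star := a(E_\star,B_\star)$. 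The factor $|\zeta[(E,B),(E_\star,B_\star)]|$ is $1$ if the graph of $(E,B)$ shares a vertex with $V_\star$ and $0$ otherwise, so the sum is over connected even $(E,B)$ \emph{touching} $V_\star$. I would split this as $\sum_{v \in V_\star} \sum_{(E,B) \ni v}$ (overcounting, which only helps), so it is enough to show that for each fixed vertex $v$,
\[
\sum_{(E,B) \text{ connected even}, \ v \in V(E,B)} (\tanh\beta)^{|E|} e^{a(E,B)} \le 1,
\]
since summing over the $a_\star$ vertices of $V_\star$ then gives the claimed bound $a_\star = a(E_\star,B_\star)$.

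To prove the displayed per-vertex bound, I would use the standard greedy/spanning-tree argument: a connected edge-set $E$ containing $v$ with $k$ edges has at most $k+1$ vertices, and the number of connected edge-subsets of $\Z^d$ of size $k$ containing a fixed vertex is at most $(2d)^{2k}$ (or some explicit $c(d)^k$; the crude bound counting depth-first exploration sequences suffices). The choice of $B \subseteq V(E,B)$ contributes at most $2^{k+1}$. Hence the sum is bounded by $\sum_{k \ge 1} (2d)^{2k} \, 2^{k+1} \, (\tanh\beta)^k \, e^{k+1} = 2e \sum_{k\ge1} \big( 2e(2d)^2 \tanh\beta \big)^k$, which is a convergent geometric series less than $1$ as soon as $\tanh\beta < \frac{1}{4e(2d)^2}$, say; this defines $\beta^{\text{ht}}_{\text{ce}}(d)$. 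The main obstacle — really the only place care is needed — is getting an honest combinatorial bound on the number of connected subgraphs of $\Z^d$ of a given size through a fixed vertex, and making sure the same constant $\beta^{\text{ht}}_{\text{ce}}(d)$ works simultaneously for both conditions (it does, since (1) follows from the uniform estimate in (2) combined with the at-most-$r$-component decomposition). Everything else is bookkeeping: tracking the harmless overcounting from $\sum_{v \in V_\star}$ and from allowing $B$ to be any subset of the vertex set rather than one constrained by the even-degree condition.
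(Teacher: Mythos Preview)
Your proposal is correct and follows essentially the same route as the paper: part~(1) is dismissed as trivial by finiteness of $\mathcal{E}^{\text{even}}_{\Lambda,A}$, and for part~(2) you split the sum over vertices $v \in V(E_\star,B_\star)$, then bound the per-vertex sum using $|V(E,B)| \le |E|+1$, the factor $2^{|E|+1}$ for the choice of $B$, and the $(2d)^{2k}$ count for connected edge-sets through a fixed vertex, arriving at the identical geometric series $\sum_{k\ge 1}(2d)^{2k}\,2^{k+1}(\tanh\beta)^k e^{k+1}$. The only cosmetic difference is that the paper invokes translation invariance explicitly to reduce to $v=0$, whereas you leave this implicit.
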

\begin{remark}
  To prove the convergence of cluster expansion, it is actually enough to prove a weaker version of Item \eqref{item:deux},
  where $(E_\star,B_\star)$ is necessarily in $\mathcal{E}^{\text{even}}_{\Lambda,A}$ and the sum only runs
  on $(E,B) \in \mathcal{E}^{\text{even}}_{\Lambda,A}$. 
  The stronger version stated here will nevertheless be useful in the proof of \cref{lem:BoudingTheSum_HT} below
  and is proved in the same way, which explains our choice.
\end{remark}
\begin{proof}
The first condition is trivial since the set $\mathcal{E}^\text{even}_{\Lambda,A}$ is finite.
Let us consider the second one.
By definition, $\zeta\big[(E,B),(E_\star,B_\star)\big]=-1$
if $(E,B)$ and $(E_\star,B_\star)$ share a vertex and $0$ otherwise.
Thus 
\[S_{(E_\star,B_\star)} \le \sum_{v \in V(E_\star,B_\star)} \left[\sum_{E \subset \EEE_{\Z^d} , B \subset \Z^d \text{ s.t. } (E,B) \text{ connected}
\atop \text{and } v \in V(E,B)}
|\overline{\wt}(E,B)|\, e^{|V(E,B)|}\right]. \]
A simple translation argument shows that the quantity between brackets is independent of $v$ so that
\[S_{(E_\star,B_\star)} \le  |V(E_\star,B_\star)| 
\left[\sum_{E \subset \EEE_{\Z^d}, B \subset \Z^d \text{ s.t. } (E,B) \text{ connected}
\atop \text{and } 0 \in V(E,B)}
|\overline{\wt}(E,B)|\, e^{|V(E,B)|}\right]. \]
Note that $(E,B)$ connected implies in particular that $B$ is included in the vertex set of the graph associated to $E$.
Therefore the sum can be simplified as a sum only over $E$ by paying a factor $2^{|V(E,B)|}$.
Moreover connectedness implies $|V(E,B)| \le |E|+1$.
Thus we get:
\[S_{(E_\star,B_\star)} \le  |V(E_\star,B_\star)| \left[\sum_{E \subset \EEE_{\Z^d} \text{ connected} \atop \text{s.t. } 0 \in V(E)}
2^{|E|+1} \, (\tanh\, \beta)^{|E|} \, e^{|E|+1} \right].\]
The summand depends only on the size $k$ of $E$.
From \cite[Lemma 3.59]{Velenik}, the number of connected sets $E \subset \EEE_{\Z^d}$ containing $0$
of size $k$ is bounded from above by $(2d)^{2k}$, so that
\[S_{(E_\star,B_\star)} \le |V(E_\star,B_\star)|  \left[ \sum_{k \ge 1} (2d)^{2k} \, 2^{k+1} \, (\tanh\, \beta)^{k}\, e^{k+1} \right].\]
For $\beta$ small enough, say $\beta < \beta^{\text{ht}}_{\text{ce}}(d)$, the sum is smaller than $1$ and the second inequality is fulfilled.
\end{proof}
We can now state the {\em cluster expansion} of $\log(\Xi^A_{\Lambda;\beta,0})$.
In this context, a cluster is a multiset \hbox{$X=\{(E_1,B_1),\dots,(E_r,B_r)\}$} of elements of $\mathcal{E}^{\text{even}}_{\Lambda,A}$.
The multiplicity of $(E,B)$ in the multiset $X$ is denoted $n_X(E,B)$.
The support $\overline{X}$ is the union of the vertex-sets $V(E_i,B_i)$.
We say that two clusters intersect if $\zeta\big[(E,B),(E_\star,B_\star)\big]=-1$ 
i.e. if they share a {\em vertex}.
\begin{proposition}
For $\beta<\beta^{\text{ht}}_{\text{ce}}(d)$, We have the following expansion:
\begin{equation}
 \log(\Xi^A_{\Lambda;\beta,0})
= \sum_{X: \overline{X} \subset \Lambda} \Psi_{\beta}(X),
\label{eq:CE_HighTemperature}
\end{equation}
where for a cluster $X= \{(E_1,B_1),\dots,(E_r,B_r)\}$,
$$\Psi_\beta(X):= \left( \prod_{(E,B) \in \mathcal{E}^\text{even}_{\Lambda,A}}
 \frac{1}{n_X(E,B)!} \right)  \, \phi((E_1,B_1),\dots,(E_r,B_r)) \, \left( \prod_{ i=1}^r \wt(E_i,B_i) \right),$$
where
$$\phi(( E_1,B_1),\dots,(E_r,B_r)) =\sum_{G \subset G_r\, \text{connected}} \left( \prod_{\{i,j\} \in G}
\zeta\big[(E_i,B_i),(E_j,B_j)\big] \right),$$
and $G_r$ denotes the complete graph on $r$ vertices.
The convergence of the series in \cref{eq:CE_HighTemperature} holds in the sense 
of locally uniform convergence of analytic functions in the complex parameters $t_1$, \dots, $t_r$
for $|t_1|,\dots,|t_r| \le \tanh^{-1}(1)$.
\label{prop:CE_HighTemperature}
\end{proposition}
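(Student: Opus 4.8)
The plan is to deduce the claimed cluster expansion from a standard convergence criterion, such as the Kotecký–Preiss condition as formulated in \cite[Section 5.4]{Velenik}, applied to the polymer model whose polymers are the connected pairs $(E,B) \in \mathcal{E}^{\text{even}}_{A}$ (or their restrictions lying in $\Lambda$), with activity $\wt(E,B)$ and hard-core interaction given by vertex-sharing. First I would recall that \cref{lem:HTR} together with the decomposition into connected components rewrites $\Xi^A_{\Lambda;\beta,0}$ exactly as the partition function of this polymer model:
\[
\Xi^A_{\Lambda;\beta,0} = 1 + \sum_{r \ge 1} \frac{1}{r!} \sum_{(E_1,B_1), \dots, (E_r,B_r) \atop \text{connected},\ \overline{X}\subset\Lambda}
\ \prod_{i=1}^r \wt(E_i,B_i) \, \prod_{1 \le i<j \le r} \big(1+\zeta\big[(E_i,B_i),(E_j,B_j)\big]\big).
\]
This is the precise setting in which the abstract cluster expansion theorem produces $\log \Xi = \sum_{X:\overline{X}\subset\Lambda} \Psi_\beta(X)$ with $\Psi_\beta$ given by the Ursell-type coefficients $\phi$ as in the statement, so the combinatorial identity is free once convergence is established.

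Next I would verify the convergence hypothesis. Taking the weight $a(E,B) = |V(E,B)|$ as in \cref{lem:Cv_CE_HighTemperature}, item (1) of that lemma gives the finiteness needed for each finite $\Lambda$, and item (2) gives exactly the Kotecký–Preiss-type bound
\[
\sum_{(E,B)\ \text{connected}} |\overline{\wt}(E,B)|\, e^{a(E,B)}\, |\zeta[(E,B),(E_\star,B_\star)]| \le a(E_\star,B_\star),
\]
valid for all $\beta < \beta^{\text{ht}}_{\text{ce}}(d)$. Since $|\wt(E,B)| \le \overline{\wt}(E,B)$ uniformly for $|t_1|,\dots,|t_r| \le \tanh^{-1}(1)$, this is precisely the domination required to invoke the convergence theorem, and it yields absolute convergence of $\sum_X |\Psi_\beta(X)|$ with a bound uniform in $\Lambda$ and in the $t_j$'s on that polydisc. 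I would set $\beta^{\text{ht}}_{\text{ce}}(d)$ to be the constant furnished by \cref{lem:Cv_CE_HighTemperature}.

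Finally, for the analyticity statement, I would observe that each $\Psi_\beta(X)$ is a polynomial in the variables $\tanh(t_j)$ — hence analytic in $t_1,\dots,t_r$ on the open polydisc $|t_j| < \tanh^{-1}(1)$, where $\tanh$ is a biholomorphism onto the unit disc — and the uniform bound from the previous paragraph shows the series $\sum_X \Psi_\beta(X)$ converges normally, hence locally uniformly, on every compact subset of that polydisc; by Weierstrass's theorem the sum is analytic there. The identity $\log\Xi^A_{\Lambda;\beta,0} = \sum_X \Psi_\beta(X)$ then holds as an identity of analytic functions (it holds formally as power series in the activities, and both sides converge). The main obstacle is purely a matter of bookkeeping: matching the abstract polymer-model output of the cluster-expansion theorem with the explicit $\phi$-coefficients and the multiplicity factors $\prod 1/n_X(E,B)!$ written in the statement, and checking that the hard-core constraint "$\overline X \subset \Lambda$" is handled consistently with the finite-volume restriction in $\mathcal{E}^{\text{even}}_{\Lambda,A}$; the genuinely analytic input (the two bounds) has already been isolated in \cref{lem:Cv_CE_HighTemperature}, so nothing substantial remains there.
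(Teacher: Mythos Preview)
Your proposal is correct and follows essentially the same approach as the paper, which simply invokes the general theory of cluster expansions from \cite[Chapter 5]{Velenik} (and \cite[Section 5.5]{Velenik} for the analyticity in the parameters). You have fleshed out the details of how \cref{lem:Cv_CE_HighTemperature} feeds into the Koteck\'y--Preiss criterion and how uniform convergence yields the analyticity, but this is exactly what the cited reference provides.
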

\begin{proof}
This follows from the general theory of cluster expansions, see e.g. \cite[Chapter 5]{Velenik}.
For the analyticity in the parameters, see specifically \cite[Section 5.5]{Velenik}.
\end{proof}
Notice that the functional $\phi((E_1,B_1),\dots,(E_r,B_r))$ depends only on which pairs $(E_i,E_j)$ intersect
and vanishes if $X$ can be split into two mutually non-intersecting subsets.

\subsubsection{Bounds on joint cumulants}
\label{SectBoundHT}
Recall that $A=\{x_1,\dots,x_r\}$ is a set of points in the finite domain $\Lambda$.
The joint cumulant $\ka_{\Lambda;\beta,0}(\sigma_{x_1},\dots,\sigma_{x_r})$ is
 the coefficient of $t_1 \dots t_r$ in
\begin{multline*}
 \log \left\langle \exp\left(\sum_{j=1}^r t_j \sigma_{x_j} \right) \right\rangle_{\Lambda,\beta,0}
=\log Z^A_{\Lambda; \beta,0}  - \log Z^\emptyset_{\Lambda; \beta,0} \\
= |\Lambda|\, \log 2 + |\EEE_\Lambda| \, \log(\cosh\, \beta)
+\sum_{j=1}^r  \log (\cos\, t_j) + \log \Xi^A_{\Lambda;\beta,0}
-  \log Z^\emptyset_{\Lambda; \beta,0}.
\end{multline*}
Only the summand $\log \Xi^A_{\Lambda;\beta,0}$ contributes to the coefficient of $t_1 \dots t_r$.
Therefore, using \cref{prop:CE_HighTemperature}, we have
\begin{equation}
\label{eq:cumulants_cluster}
\ka_{\Lambda;\beta,0}(\sigma_{x_1},\dots,\sigma_{x_r}) = [t_1 \dots t_r] \sum_{X: \overline{X} \subset \Lambda} \Psi_{\beta}(X)
=  \sum_{X: \overline{X} \subset \Lambda}  [t_1 \dots t_r]  \Psi_{\beta}(X).
\end{equation}
The exchange of infinite sum and coefficient extraction is valid 
since we have uniform convergence of analytic functions on a neighborhood of $0$.
A cluster $X=\{(E_1,B_1),\dots,(E_r,B_r)\}$ contributes to the coefficient of $t_1 \dots t_r$
only if $A=B_1 \uplus \dots \uplus B_r \subseteq \overline{X}$.
Then 
\begin{multline*} [t_1 \dots t_r]  \Psi_{\beta}(X) = 
\left( \prod_{(E,B) \in \mathcal{E}^\text{even}_{\Lambda,A}}
 \frac{1}{n_X(E,B)!} \right)  \, \varphi((E_1,B_1),\dots,(E_r,B_r)) \, (\tanh \beta)^{e(X)}\\
 \le C_r \, \One[X \text{ is connected}] \, (\tanh \beta)^{e(X)},
 \end{multline*}
 where $e(X)=|E_1|+\dots+|E_r|$ and $\One[\text{event}]$ is the indicator function of the corresponding event.
 Back to \cref{eq:cumulants_cluster}, we get
 \[|\ka_{\Lambda;\beta,0}(\sigma_{x_1},\dots,\sigma_{x_r}) | \le
  \sum_{X: \overline{X} \subset \Lambda, X\text{ connected,}\atop \text{and } A \subseteq \overline{X}}
  C_r (\tanh \beta)^{e(X)}.\]
  Taking the limit $\Lambda \uparrow \Z^d$, we get a similar upper bound for the cumulant
  $\ka_{\beta,0}(\sigma_{x_1},\dots,\sigma_{x_r})$ under the probability measure 
  $\mu_{\beta,0}$ corresponding to the whole lattice $\Z^d$:
  \begin{equation}\label{eq:BoundCum1}
|\ka_{\beta,0}(\sigma_{x_1},\dots,\sigma_{x_r}) | \le
  \sum_{X\text{ connected}\atop \text{s.t. } A \subseteq \overline{X}}
  C_r (\tanh \beta)^{e(X)}.
  \end{equation}
  The key point in the above formula is that any connected cluster $X$
  with $A \subseteq \overline{X}$ fulfills $e(X) \ge \ell_T(A)$.
  We now need the following lemma, whose proof is inspired by the end of the proof
  of Theorem 5.27 in \cite{Velenik}.
 \begin{lemma}
 \label{lem:BoudingTheSum_HT}
There exist constants $ \beta^{\text{ht}}_{\text{jc}}(d)>0$ and $\eps>0$
such that, for $\beta \le \beta^{\text{ht}}_{\text{jc}}(d)$,
 we have the following inequality:
 \[\sum_{X\text{ connected}\atop \text{s.t. } x_1 \in \overline{X}\text{and } e(X) \ge R} (\tanh \beta)^{e(X)}
 \le \eps^R.\]
 \end{lemma}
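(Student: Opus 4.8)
The plan is to bound the sum $\sum_X (\tanh\beta)^{e(X)}$ over connected clusters $X$ with $x_1 \in \overline{X}$ and $e(X)\ge R$ by first organizing the sum according to the value $k := e(X)$, which is at least $R$, and then showing that for each fixed $k$ the number of such clusters (weighted appropriately) grows at most exponentially in $k$, with a rate that is beaten by $(\tanh\beta)^k$ once $\beta$ is small enough. Writing $\eta:=\tanh\beta$, I want to produce a bound of the form $\sum_k (\text{growth rate})^k \eta^k \le \sum_{k\ge R} \lambda^k = \frac{\lambda^R}{1-\lambda}$ for some $\lambda<1$, and then absorb the geometric-series constant: if $\lambda$ is taken below $\tfrac12$ (still possible by shrinking $\beta$) we get $\sum_{k\ge R}\lambda^k \le 2\lambda^R \le \lambda'^R$ for $R\ge 1$ with, say, $\lambda' = \sqrt{\lambda} \cdot \sqrt{2}$ adjusted — more cleanly, just pick $\eps$ with $\lambda < \eps <1$ so that $\frac{\lambda^R}{1-\lambda}\le \eps^R$ for all $R\ge 1$ (which holds as soon as $\frac{1}{1-\lambda}\le \eps/\lambda$, true for $\eps$ close enough to $1$).

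The combinatorial heart is the counting estimate. A connected cluster $X=\{(E_1,B_1),\dots,(E_m,B_m)\}$ with support containing $x_1$ and total edge-count $e(X)=k$ can be encoded by: (i) the ordered list of the underlying connected edge-subsets $E_1,\dots,E_m$ of $\EEE_{\Z^d}$, whose union (together with the fact that the intersection graph is connected) forms a connected ``blob'' of edges near $x_1$; (ii) for each $i$, the subset $B_i$ of $V(E_i)$, contributing at most $2^{|V(E_i)|}\le 2^{|E_i|+1}$ choices; (iii) the multiplicities. The key inputs are exactly the ones used in \cref{lem:Cv_CE_HighTemperature}: connectedness of a cluster forces $|V(E_i,B_i)|\le |E_i|+1$ and forces the whole support to be a connected vertex set of size $O(k)$ anchored at $x_1$, and \cite[Lemma 3.59]{Velenik} bounds the number of connected edge-sets of size $j$ through a given vertex by $(2d)^{2j}$. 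Since the Kotecký–Preiss / Ursell bound (the content of the cluster-expansion machinery, \cref{prop:CE_HighTemperature} and the convergence condition in \cref{lem:Cv_CE_HighTemperature}) already controls the $\phi$-functional and the factorials — indeed this is precisely the statement that $\sum_X |\Psi_\beta(X)|$ converges with the weight $e^{a(X)}$ to spare — the cleanest route is to invoke the standard cluster-expansion estimate $\sum_{X:\,\overline X \ni x_1} |\Psi_{\overline{\wt}}(X)|\, e^{a(X)} \le a(\{x_1\}) = 1$ (applied to the dominating weight $\overline{\wt}(E,B)=\eta^{|E|}$), restricted to clusters with $e(X)\ge R$.

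Concretely, I would argue: from the convergence condition verified in \cref{lem:Cv_CE_HighTemperature}, the standard bound on cluster expansions (e.g. \cite[Thm 5.4]{Velenik} or the Kotecký–Preiss estimate) gives, with $\overline{\wt}$ in place of $\wt$,
\[
\sum_{X \text{ connected},\ x_1 \in \overline X} \Big| \prod_{(E,B)} \tfrac{1}{n_X(E,B)!} \Big|\, |\phi(\dots)|\, \prod_i \overline{\wt}(E_i,B_i)\, e^{a(X)} \;\le\; 1.
\]
Now $\overline{\wt}(E_i,B_i)=\eta^{|E_i|}$, so $\prod_i \overline{\wt}(E_i,B_i)=\eta^{e(X)}$; bounding $|\phi|$ and the factorial product against the $e^{a(X)}$ factor (which is what the above display does for us) and keeping an extra factor $\eta^{e(X)}$ in reserve, one replaces $\eta$ by $\eta^{1/2}$ throughout: choosing $\beta$ small enough that the analogue of \cref{lem:Cv_CE_HighTemperature}\eqref{item:deux} holds with $\tanh\beta$ replaced by $\sqrt{\tanh\beta}$ (still a small-$\beta$ condition, defining $\beta^{\text{ht}}_{\text{jc}}(d)$), the same estimate yields
\[
\sum_{X \text{ connected},\ x_1 \in \overline X} \eta^{e(X)/2} \le 1,
\]
whence $\sum_{X \text{ connected},\, x_1\in\overline X,\, e(X)\ge R} \eta^{e(X)} \le \eta^{R/2}\sum_X \eta^{e(X)/2} \le \eta^{R/2}$, and we take $\eps=\eta^{1/2}=\sqrt{\tanh \beta^{\text{ht}}_{\text{jc}}(d)}<1$. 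The main obstacle is bookkeeping the passage from the abstract cluster-expansion convergence bound to a statement with a spare geometric factor $\eta^{e(X)}$ uniformly in the cluster; this is handled by the ``$\eta \to \sqrt\eta$'' trick, i.e. re-running the verification of the Kotecký–Preiss condition with the square-root weight so that half of each $\eta^{|E_i|}$ is consumed by convergence and the other half is left to produce $\eps^R$. Everything else — the vertex/edge counting, the $2^{|V|}$ factor for the choice of $B_i$, the $(2d)^{2k}$ bound — is exactly as in \cref{lem:Cv_CE_HighTemperature} and requires only that $\beta$ be taken below a dimension-dependent threshold.
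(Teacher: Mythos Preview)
Your proposal is correct and is essentially the paper's own argument: both extract a spare geometric factor by running the Kotecký--Preiss convergence criterion at a looser weight (you split $\tanh\beta$ as $\sqrt{\tanh\beta}\cdot\sqrt{\tanh\beta}$, while the paper compares two inverse temperatures $\beta<\beta'$ with $\tanh\beta<\eps\,\tanh\beta'$, which is the same trick). Your intermediate display involving $|\phi|$ and $e^{a(X)}$ is an unnecessary detour---the needed bound is on the bare sum $\sum_X\prod_i\overline{\wt}(E_i,B_i)$ over connected clusters anchored at $x_1$, and the paper obtains this directly from \cref{lem:Cv_CE_HighTemperature} and \cite[Theorem~5.4]{Velenik} without passing through the Ursell function.
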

 \begin{proof}
The proof involves different values of the inverse temperature $\beta$
so that we will here make explicit the dependency of the weight in $\beta$:
we write $\overline{\wt}_\beta(E,B)$  instead of $\overline{\wt}(E,B)$.
 We first prove that for $\beta' \le \beta^{\text{ht}}_{\text{jc}}(d)$, we have
 \begin{equation}\label{eq:SumPsiLeqOne}
\sum_{X\text{ connected}\atop \text{s.t. } x_1 \in \overline{X}} (\tanh \beta')^{e(X)} =\sum_{X\text{ connected}\atop \text{s.t. } x_1 \in \overline{X}}  \prod_{i=1}^r \overline{\wt}_{\beta'}(E_i,B_i)  \le 1.
 \end{equation}
 This uses the same argument as in \cite[Eq. (5.31)]{Velenik}:
 \begin{multline*}
 \sum_{X\text{ connected}\atop \text{s.t. } x_1 \in \overline{X}}  \prod_{i=1}^r \overline{\wt}_{\beta'}(E_i,B_i)
 \le \sum_{r \ge 1} r \sum_{(E_1,B_1) \text{ connected} \atop \text{s.t. } x_1 \in V(E_1,B_1)} \ \sum_{(E_2,B_2),\dots,(E_r,B_r)}
 \prod_{i=1}^r \overline{\wt}_{\beta'}(E_i,B_i) \\
 \le \sum_{(E_1,B_1) \text{ connected} \atop \text{s.t. } x_1 \in V(E_1,B_1)}  \overline{\wt}_{\beta'}(E_1,B_1) e^{|a(E_1,B_1)|}
 \le a(\emptyset, \{x_1\}) =1,
 \end{multline*}
 where we used \cref{lem:Cv_CE_HighTemperature} and \cite[Theorem 5.4]{Velenik}.
 This proves \eqref{eq:SumPsiLeqOne}.
 
 Let us fix a value $\beta'$ as above. 
 There exists a constant $\eps <1$ such that for $\beta$ small enough, we have $\tanh \beta < \eps\, \tanh \beta'$.
 We can now write
 $$\sum_{X\text{ connected}\atop \text{s.t. } x_1 \in \overline{X} \text{and } e(X) \ge R} (\tanh \beta)^{e(X)} 
 \le  \eps^R \sum_{X\text{ connected}\atop \text{s.t. } x_1 \in \overline{X} \text{and } e(X) \ge R} (\tanh \beta')^{e(X)} \le \eps^R,$$
 where the last inequality uses \eqref{eq:SumPsiLeqOne}. This ends the proof of the lemma.
 \end{proof}

Combining \cref{eq:BoundCum1,lem:BoudingTheSum_HT}, we get the desired bound:
for $\beta \le \beta^{\text{ht}}_{\text{jc}}(d)$,
\[ |\ka_{\beta,0}(\sigma_{x_1},\dots,\sigma_{x_r}) | \le C_r \eps^{\ell_T(A)}.\]

\medskip

\subsection{At very low temperature, without magnetic field}~ 
\label{Sect:CE_LT}

\subsubsection{The cluster expansion of the partition function}
We now turn to the regime without magnetic field ($h=0$) and very low temperature ($\beta$ large).
Intuitively, in that case, the spin configurations with fewer pairs of neighbours having opposite spins appear with higher probability. 
To emphasize the role of these pairs, we rewrite the Hamiltonian as follows:
$$H^{\eta}_{\Lambda;\beta, h} (\omega)= -\beta |E^{\eta}_{\Lambda}| -\beta \sum_{\{i,j\} \in \EEE^{\eta}_{\Lambda} } (\sigma_i(\omega) \sigma_j(\omega) - 1).$$
The only non-zero terms in the sum are those where two neighbours $i$ and $j$ have opposite spins.
Let us consider a finite subset $\Lambda \subset \Z^d$ with $+$ boundary condition. A typical spin configuration will then look as a sea of $+$'s with some islands of $-$'s. Therefore the interesting macroscopic components for the cluster expansion in that case are the frontiers between the areas of $+$'s and those of $-$'s, which are called \emph{contours}. Let us define them more rigorously.

Given $\omega \in \Omega_{\Lambda}^{+}$, let $\Lambda^{-}(\omega)$ denote the set of lattice points $i$ where $\sigma_i(\omega)=-1.$
For each $i \in \Z^d$ we define $\mathcal{S}_i := i + [\frac{-1}{2},\frac{1}{2}]^d$ to be the unit cube of $\R^d$ centred at $i$. Now let
$$\mathcal{M}(\omega) := \bigcup_{i \in \Lambda^{-}(\omega)} \mathcal{S}_i,$$
and consider the set of maximal connected components of the boundary of $\mathcal{M}(\omega)$, which we denote
$$\Gamma'(\omega)= \{ \gamma_1, \dots, \gamma_r \}.$$
Each of the $\gamma_i$'s is a \emph{contour} of $\omega.$ Contours are connected sets of $(d-1)$-dimensional faces of the cubes $\mathcal{S}_i$. We denote by $| \gamma_i|$ the number of such faces in $\gamma_i$.
Let $\Gamma_{\Lambda} := \{ \gamma \in \Gamma'(\omega) : \omega \in \Omega_{\Lambda}^{+} \}$ denote the set of all possible contours in $\Lambda.$ Finally, a collection of contours $\Gamma' \subset \Gamma_{\Lambda}$ is said to be \emph{admissible} if there exists a spin configuration $\omega \in \Omega_{\Lambda}^{+}$ such that $\Gamma'(\omega) =\Gamma'.$

Thus when $\Lambda$ is simply connected (which we will assume from now on in this paper), the partition function can be rewritten as
$$Z^+_{\Lambda; \beta,0} = e^{\beta |\EEE_{\Lambda}^+|} \Xi^+_{\Lambda;\beta,0} ,$$
where
$$\Xi^+_{\Lambda;\beta,0} := \sum_{\Gamma' \subset \Gamma_{\Lambda} admissible} \prod_{\gamma \in \Gamma'} e^{-2 \beta |\gamma|}.$$

The cluster expansion is an expression for $\log \Xi^+_{\Lambda;\beta,0}$ as an absolutely convergent series.
In this case, a cluster is a collection $X= \{\gamma_1, \dots, \gamma_r\}$ of contours such that for every two contours $\gamma_i$ and $\gamma_j$, there is a ``path'' of faces of contours of $X$ connecting $\gamma_i$ and $\gamma_j$. Note that $X$ is actually a multiset, and denote by $n_X(\gamma)$ the number of copies of $\gamma$ appearing in $X$ and by $\overline{X}$ the support of $X$, ie $\overline{X}= \cup_{\gamma \in X} \gamma.$
In the following we write $\overline{X} \subseteq \Lambda$ to say that $\overline{X} \subseteq \cup_{i \in \Lambda} \mathcal{S}_i$ as subsets of $\R^d$.

In can be shown (see e.g. \cite[Chapter 5]{Velenik}) that the cluster expansion converges for $\beta$ large enough.

\begin{proposition}
\label{prop:cluesterexp}
There exists $\beta_{\text{ce}}^{\text{lt}}(d)$ such that for all $\beta > \beta_{\text{ce}}^{\text{lt}}(d)$,
$$\log \Xi^+_{\Lambda;\beta,0} = \sum_{X: \overline{X} \subseteq \Lambda} \Psi_{\beta}(X),$$
where for a cluster $X= \{\gamma_1, \dots, \gamma_r\}$,
$$\Psi_\beta(X):= \left( \prod_{\gamma \in \Gamma_{\Lambda}} \frac{1}{n_X(\gamma)!} \right) \phi(\gamma_1,\dots,\gamma_r)  e^{-2 \beta \sum_{i=1}^r |\gamma_i|},$$
where
$$\phi(\gamma_1,\dots,\gamma_r)=\sum_{G \subseteq G_r connected} \prod_{\{i,j\} \in G} \zeta(\gamma_i,\gamma_j),$$
$$\zeta(\gamma_i,\gamma_j) := \begin{cases}
0 \text{ if } \gamma_i \cap \gamma_j = \emptyset,\\
-1 \text{ otherwise,}
\end{cases}$$
and $G_r$ denotes the complete graph on $r$ vertices.

\end{proposition}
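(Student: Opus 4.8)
The plan is to deduce the proposition from the abstract cluster expansion theorem for polymer models, in the form given in \cite[Theorem 5.4]{Velenik}. First I would recast $\Xi^+_{\Lambda;\beta,0}$ as a polymer partition function: the polymers are the contours $\gamma \in \Gamma_\Lambda$, the weight of a polymer is $w_\beta(\gamma) := e^{-2\beta|\gamma|} > 0$, and two polymers $\gamma,\gamma'$ are declared incompatible precisely when $\gamma \cap \gamma' \neq \emptyset$ (the relation encoded by $\zeta(\gamma,\gamma')$). With these conventions, $\Xi^+_{\Lambda;\beta,0}$ is exactly the sum over families of pairwise compatible contours weighted by $\prod_\gamma w_\beta(\gamma)$, i.e. the polymer partition function.

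Next I would verify the Koteck\'y--Preiss convergence condition: it suffices to produce a function $a(\gamma) \geq 0$ on contours such that, for every $\gamma^\star \in \Gamma_\Lambda$,
\[
 \sum_{\gamma \in \Gamma_\Lambda \,:\, \gamma \cap \gamma^\star \neq \emptyset} w_\beta(\gamma)\, e^{a(\gamma)} \;\le\; a(\gamma^\star).
\]
I would take $a(\gamma) = |\gamma|$. Decomposing the sum according to which face of $\gamma^\star$ is met by $\gamma$ and using translation invariance of the lattice, the left-hand side is at most $|\gamma^\star|$ times the quantity $\sum_{\gamma \ni f_0} e^{-2\beta|\gamma|} e^{|\gamma|}$, where $f_0$ is a fixed $(d-1)$-dimensional face. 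The essential combinatorial input is a Peierls-type bound: the number of contours of size $n$ containing a given face is at most $K(d)^n$ for a constant $K(d)$ depending only on the dimension --- this is standard and follows from the fact that a contour is a connected set of faces, together with the usual bound on the number of connected subgraphs of a lattice of given size (in the spirit of \cite[Lemma 3.59]{Velenik}). Consequently the inner sum is bounded by $\sum_{n \geq 1} \big(K(d)\,e\,e^{-2\beta}\big)^n$, which is $\leq 1$ as soon as $\beta$ exceeds a threshold $\beta_{\text{ce}}^{\text{lt}}(d)$; since $|\gamma^\star| \geq 1$, this is the required inequality.

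Once the hypothesis is checked, \cite[Theorem 5.4]{Velenik} gives directly the expansion $\log \Xi^+_{\Lambda;\beta,0} = \sum_{X : \overline{X} \subseteq \Lambda} \Psi_\beta(X)$ with $\Psi_\beta$ as stated (the combinatorial coefficient $\phi$ being the usual Ursell function), as well as absolute convergence that is uniform in $\Lambda$. I expect the only real work to be the Peierls enumeration of contours by size and the bookkeeping of constants needed to make the Koteck\'y--Preiss inequality hold for an explicit $\beta_{\text{ce}}^{\text{lt}}(d)$; all of this is classical in the low-temperature theory of the Ising model, so, as announced just before the statement, I would simply invoke \cite[Chapter 5]{Velenik} for the details rather than reproduce them.
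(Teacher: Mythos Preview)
Your proposal is correct and matches the paper's approach: the paper gives no proof of this proposition at all, simply citing \cite[Chapter 5]{Velenik} for the convergence of the low-temperature cluster expansion, which is exactly what you end up doing after your (more detailed) sketch of the Koteck\'y--Preiss verification with $a(\gamma)=|\gamma|$. The one point you leave implicit---that admissibility of a family of contours is equivalent to pairwise disjointness, so that $\Xi^+_{\Lambda;\beta,0}$ really is a polymer partition function---is also handled in \cite{Velenik}, so your deferral covers it.
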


\subsubsection{Bounds on joint cumulants}
\label{SectBoundLT}
This cluster expansion can be used to compute expectations and therefore deduce some bounds on joint cumulants.

Let $A \subseteq \Lambda$ and let us define $\sigma_A := \prod_{i \in A} \sigma_i$. 
Its expectation is given by
$$\langle \sigma_A \rangle^+_{\Lambda;\beta,0}= \sum_{\omega \in \Omega^+_{\Lambda}} \sigma_A(\omega) \frac{e^{-H_{\Lambda;\beta,0}(\omega)}}{Z^+_{\Lambda; \beta,0}}.$$
For any spin configuration $\omega \in \Omega^+_{\Lambda}$ and any contour  $\gamma \in \Gamma'(\omega)$, let us define the \emph{interior} of $\gamma$ (written $\Int(\gamma)$) as the set of points of $\Lambda$ which would have spin $-1$ if $\gamma$ was the only contour of $\omega$. We also write $\Int(X):=  \bigcup_{\gamma \in X} \Int(\gamma).$ Thus for any $\omega \in \Omega^+_{\Lambda}$ and any $i \in \Lambda$,
$$\sigma_i(\omega) = (-1)^{| \{\gamma \in \Gamma'(\omega) : i \in \Int(\gamma)\}|},$$
and thus
\begin{align*}
\sigma_A(\omega) &= (-1)^{\sum_{i \in A} | \{\gamma \in \Gamma'(\omega) : i \in \Int(\gamma)\}|}
\\&= (-1)^{\sum_{\gamma \in \Gamma'(\omega)} | \{i \in A : i \in \Int(\gamma)\}|}.
\end{align*}
Therefore one can write
$$\langle \sigma_A\rangle^+_{\Lambda;\beta,0}= \frac{\Xi^{+,A}_{\Lambda;\beta,0}}{\Xi^+_{\Lambda;\beta,0}},$$
where
$$\Xi^{+,A}_{\Lambda;\beta,0} := \sum_{\Gamma' \subseteq \Gamma_{\Lambda} admissible} \prod_{\gamma \in \Gamma'} (-1)^{| \{i \in A : i \in \Int(\gamma)\}|} e^{-2 \beta |\gamma|}.$$
The cluster expansion converges, which means that we have an analogue of Proposition \ref{prop:cluesterexp} for $\Xi^{+,A}_{\Lambda;\beta,0}$, and thus $\langle \sigma_A \rangle^+_{\Lambda;\beta,0}$ can be expressed as
$$\langle \sigma_A\rangle^+_{\Lambda;\beta,0} = \exp \left( \sum_{X: \overline{X} \subseteq \Lambda} \Psi^A_{\beta}(X) -  \sum_{X: \overline{X} \subseteq \Lambda} \Psi_{\beta}(X) \right),$$
where for a cluster $X= \{\gamma_1, \dots, \gamma_r\}$,

\begin{align*}
\Psi^A_\beta(X):=& \left( \prod_{\gamma \in \Gamma_{\Lambda}} \frac{1}{n_X(\gamma)!} \right) \left( \sum_{G \subseteq G_r connected} \zeta(\gamma_i,\gamma_j) \right)
\\& \times (-1)^{ \sum_{j=1}^r | \{i \in A : i \in \Int(\gamma_j)\}|} e^{-2 \beta \sum_{j=1}^r |\gamma_j|}.
\end{align*}

For all clusters $X$ such that no vertex of $A$ is in the interior of any of its contours, the exponent of $(-1)$ in the definition of $\Psi^A_\beta(X)$ is always $0$ and thus $\Psi^A_\beta(X)=\Psi_\beta(X).$
Therefore we obtain
$$\langle \sigma_A\rangle^+_{\Lambda;\beta,0} = \exp \left( \sum_{X \sim A : \overline{X} \subseteq \Lambda} (\Psi^A_{\beta}(X) -  \Psi_{\beta}(X)) \right),$$
where $X \sim A$ means that $X$ contains at least one contour $\gamma$ such that a point of $A$ is in the interior of $\gamma$.
The series is absolutely convergent and we can let $\Lambda \uparrow \Z^d$, obtaining the following proposition.

\begin{proposition}[Equation (5.49) in~\cite{Velenik}]
\label{prop:expectation}
For $\beta$ large enough,
$$\langle \sigma_A\rangle^+_{\beta,0} = \exp \left( \sum_{X \sim A } (\Psi^A_{\beta}(X) -  \Psi_{\beta}(X)) \right).$$
\end{proposition}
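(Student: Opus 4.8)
The plan is to establish Proposition~\ref{prop:expectation} by taking the thermodynamic limit $\Lambda \uparrow \Z^d$ in the finite-volume identity
\[
\langle \sigma_A\rangle^+_{\Lambda;\beta,0} = \exp \left( \sum_{X \sim A : \overline{X} \subseteq \Lambda} (\Psi^A_{\beta}(X) -  \Psi_{\beta}(X)) \right),
\]
which has already been derived above from the two cluster expansions (the one for $\Xi^+_{\Lambda;\beta,0}$ from \cref{prop:cluesterexp} and its analogue for $\Xi^{+,A}_{\Lambda;\beta,0}$), after cancelling all terms indexed by clusters $X$ with $X \not\sim A$. So the only thing left is to justify passing to the limit on both sides.

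For the left-hand side, I would invoke the weak convergence $\mu^+_{\Lambda_n;\beta,0} \to \mu_{\beta,0}$ recalled in \cref{sec:prelimIsing}: since $\sigma_A$ is a bounded local function, $\langle \sigma_A\rangle^+_{\Lambda_n;\beta,0} \to \langle \sigma_A\rangle^+_{\beta,0}$. For the right-hand side, the key is the absolute convergence of the cluster series, which holds for $\beta$ large enough (say $\beta > \beta_{\text{ce}}^{\text{lt}}(d)$) by the standard cluster expansion convergence criterion already cited from \cite[Chapter 5]{Velenik}. The bound $|\Psi_\beta(X)| \le$ (weight of $X$) together with the criterion gives $\sum_{X : \overline{X} \cap \Lambda \ne \emptyset, \overline{X} \not\subseteq \Lambda} |\Psi_\beta(X)| \to 0$ as $\Lambda \uparrow \Z^d$, and the same estimate holds for $\Psi^A_\beta$ since $|\Psi^A_\beta(X)| = |\Psi_\beta(X)|$ (the only difference is a sign factor of modulus $1$). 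Hence $\sum_{X \sim A : \overline{X} \subseteq \Lambda} (\Psi^A_{\beta}(X) - \Psi_{\beta}(X))$ converges, as $\Lambda \uparrow \Z^d$, to the absolutely convergent series $\sum_{X \sim A} (\Psi^A_{\beta}(X) - \Psi_{\beta}(X))$. Taking exponentials (which is continuous) and combining with the left-hand side limit yields the claimed identity.

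The main obstacle, such as it is, is purely bookkeeping: one must be careful that the condition $X \sim A$ does not interfere with the tail estimate — but it does not, since $X \sim A$ forces $\overline{X}$ to come near the fixed finite set $A$, which only helps the summability. The genuine analytic input (convergence of the cluster expansion for large $\beta$) is imported wholesale from \cite{Velenik}, so the proof reduces to the short continuity-and-dominated-convergence argument sketched above. One may simply state that the result is Equation~(5.49) in~\cite{Velenik} and that the derivation above (through the cancellation $X \not\sim A$) together with this standard limiting argument establishes it.
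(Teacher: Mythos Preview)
Your proposal is correct and matches the paper's approach: the paper simply writes ``The series is absolutely convergent and we can let $\Lambda \uparrow \Z^d$, obtaining the following proposition,'' and you have spelled out precisely this limiting argument. One small wording point: the tail estimate you state for $\sum_{X : \overline{X} \cap \Lambda \ne \emptyset,\ \overline{X} \not\subseteq \Lambda} |\Psi_\beta(X)|$ is not the right sum (and would in fact diverge), but as you yourself note in the next paragraph, the relevant sum is over $X \sim A$, which is anchored near the finite set $A$ and hence absolutely summable --- so the argument goes through.
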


We now want to find estimates on the joint cumulants of the variables $\{\sigma_i : i \in A\}$, for all $A \subset \Z^d$ finite.
But in this case, it is easier to estimate first another quantity related to cumulants. We define, for some set $B$ and random variables $(Y_i)_{i \in B}$ defined on the same probability space,
$$Q \left(Y_j; j \in B \right) := \prod_{ \delta \subseteq B \atop \delta \neq \emptyset}  \ \left\langle \prod_{j \in \delta} Yj \right\rangle^{(-1)^{|\delta|}} .$$

For example,
$$Q_{\beta,0}(\sigma_1, \sigma_2) = \frac{\langle \sigma_1 \sigma_2 \rangle_{\beta,0}^+ }{\langle\sigma_1\rangle_{\beta,0}^+ \langle\sigma_2\rangle_{\beta,0}^+}.$$

We show a bound on the quantities $Q_{\beta,0} \left(\sigma_j; j \in A \right)$ for all finite $A \subset \Z^d.$ 

\begin{lemma}
\label{lem:quotient}
Let $A$ be a finite subset of $\Z^d$ of size $r$. Then for $\beta$ large enough,
$$ \left| Q_{\beta,0} \left(\sigma_j; j \in A \right)  - 1 \right| \leq C_r e^{-c \beta \ell_T(A)},$$
where $c=c(d)$ and $C_r$ are positive constants depending respectively on $d$ and $r$.
\end{lemma}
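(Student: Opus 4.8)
The plan is to exploit the cluster expansion of $\langle \sigma_A \rangle^+_{\beta,0}$ from \cref{prop:expectation}, together with the multiplicative structure of $Q$, to show that $\log Q_{\beta,0}(\sigma_j; j\in A)$ is a sum over clusters that simultaneously ``see'' \emph{several disjoint pieces of $A$}, and that such clusters must be geometrically large. First I would write, using \cref{prop:expectation} and the definition of $Q$,
\[
\log Q_{\beta,0}(\sigma_j;j\in A) = \sum_{\emptyset \ne \delta \subseteq A} (-1)^{|\delta|} \log\langle \sigma_\delta\rangle^+_{\beta,0}
= \sum_{\emptyset \ne \delta \subseteq A} (-1)^{|\delta|} \sum_{X \sim \delta} \big(\Psi^\delta_\beta(X) - \Psi_\beta(X)\big).
\]
For a fixed cluster $X$, one then collects the coefficient of $X$ across all $\delta$. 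The key combinatorial observation is that the quantity $\Psi^\delta_\beta(X) - \Psi_\beta(X)$ depends on $\delta$ only through the subset $\delta \cap \Int(X)$ and in fact only through the parity vector $\big(|\delta \cap \Int(\gamma_j)|\big)_j$; more precisely it equals $\Psi_\beta(X)\big((-1)^{\sum_j |\delta \cap \Int(\gamma_j)|} - 1\big)$. Summing $(-1)^{|\delta|}$ times this over all $\delta \subseteq A$ with $\delta \cap \Int(X)\ne\emptyset$ (the restriction $X\sim\delta$), and performing the sum over the part of $\delta$ lying outside $\Int(X)$, one gets a factor that vanishes \emph{unless} $\Int(X)$ meets $A$ in a way that cannot be ``factored'': concretely, the coefficient of $\Psi_\beta(X)$ vanishes unless, writing $A_X = A\cap\Int(X)$, the contribution does not split, which forces $A_X$ to be non-separable by $X$ in a suitable sense.

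The upshot I would aim for is a clean statement: $\log Q_{\beta,0}(\sigma_j;j\in A) = \sum_{X: X \text{ "connects" } A} c_X\, \Psi_\beta(X)$ where the sum is over clusters $X$ such that the set $A\cap\Int(X)$, together with the geometry of $X$, cannot be split into two parts lying in the interiors of two non-intersecting sub-clusters — and $|c_X|$ is bounded by a constant depending only on $r$. Such a cluster $X$ must, by a geometric argument on contours, have total size $\sum_j |\gamma_j| \ge c(d)\,\ell_T(A)$: indeed the union of the contours in $X$ together with paths connecting them forms a connected region of faces whose projection/neighbourhood must ``surround'' or separate the points of $A$ in every coordinate direction, and hence has at least order $\ell_T(A)$ faces (here I would invoke the standard isoperimetric-type fact that a connected set of faces whose interiors cover a separating configuration for $A$ has size comparable to the tree-length of $A$, analogous to \cref{prop:2longueurs}). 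Then one bounds
\[
\big|\log Q_{\beta,0}(\sigma_j;j\in A)\big| \le C_r \sum_{X: \sum_j|\gamma_j| \ge c(d)\ell_T(A)} |\Psi_\beta(X)|\, e^{-2\beta(\sum_j|\gamma_j| - c(d)\ell_T(A))}\, e^{-2\beta c(d)\ell_T(A)},
\]
wait — more cleanly: split $e^{-2\beta\sum|\gamma_j|}$ as $e^{-\beta\sum|\gamma_j|}\cdot e^{-\beta\sum|\gamma_j|}$, use one half to get $e^{-\beta c(d)\ell_T(A)}$ and the other half (together with convergence of the cluster expansion, i.e. the Kotecký--Preiss / \cite[Theorem 5.4]{Velenik} bound controlling $\sum_{X\ni\gamma_0}|\Psi_\beta(X)|$) to get a finite constant, as was done in the high-temperature case via \cref{lem:BoudingTheSum_HT}. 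This yields $\big|\log Q_{\beta,0}(\sigma_j;j\in A)\big| \le C_r e^{-c\beta\ell_T(A)}$, and since $\ell_T(A)\ge 1$ and $\beta$ is large, exponentiating and using $|e^x - 1| \le 2|x|$ for small $x$ gives the claimed bound on $|Q_{\beta,0}(\sigma_j;j\in A) - 1|$ (absorbing constants into $C_r$).

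The main obstacle I expect is the combinatorial bookkeeping in the second step: proving that the only clusters surviving the alternating sum $\sum_\delta (-1)^{|\delta|}(\cdots)$ are those that genuinely connect all of $A$ through their interiors, and that the surviving coefficients $c_X$ stay bounded by a constant depending only on $r$ (and not on $X$ or $A$). This is essentially a Möbius/inclusion–exclusion computation on the Boolean lattice of subsets of $A$, of the same flavour as the classical identity expressing $Q$ (or $\log$ of it) in terms of connected contributions, but one has to track it carefully for multisets of contours with interiors. The subsequent geometric lower bound $\sum_j|\gamma_j|\ge c(d)\ell_T(A)$ should be routine given the contour formalism — it is the low-temperature analogue of the remark after \eqref{eq:BoundCum1} that connected clusters $X$ with $A\subseteq\overline X$ satisfy $e(X)\ge\ell_T(A)$ — and the final summation is a verbatim adaptation of the argument in \cref{lem:BoudingTheSum_HT}.
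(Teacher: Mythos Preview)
Your outline is essentially the paper's proof, and the steps you list are the right ones. Two points where you can sharpen what you have written.

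\medskip
\textbf{The inclusion--exclusion is cleaner than you fear.} You correctly observe that $\Psi^\delta_\beta(X)$ depends on $\delta$ only through $\delta\cap\Int(X)$, and that one should perform the sum over the part of $\delta$ lying outside $\Int(X)$. Doing exactly that gives the factor $\sum_{\delta''\subseteq A\setminus\Int(X)}(-1)^{|\delta''|}$, which vanishes unless $A\setminus\Int(X)=\emptyset$. So the surviving condition is simply $A\subseteq\Int(X)$: every point of $A$ lies in the interior of some contour of $X$. There is no need for a subtler ``non-separability'' notion, and the bound $|c_X|\le 2(2^r-1)$ is immediate from $|\Psi^I_\beta(X)|\le|\Psi_\beta(X)|$ and counting subsets $I\subseteq A$. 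This is precisely what the paper does; your anticipated ``main obstacle'' dissolves once you write the sum out.

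\medskip
\textbf{Anchoring the final sum.} The convergence bound from cluster expansion (\cite[Eq.~(5.31)]{Velenik}) controls $\sum_{X:\overline X\ni v}|\Psi_\beta(X)|$ for a \emph{fixed} vertex $v$, but the condition $A\subseteq\Int(X)$ does not directly pin down a vertex of $\overline X$. The paper handles this by noting that if $|\overline X|=R$ and $j_1\in A\subseteq\Int(X)$, then $\overline X$ contains a vertex at distance at most $R$ from $j_1$; there are $O(R^d)$ such vertices, and the resulting polynomial factor is absorbed by the exponential $e^{-\beta R}$ coming from the halving trick you describe. You should make this explicit, since as written your bound has no anchor for the Kotecký--Preiss estimate.

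\medskip
With these two clarifications your argument coincides with the paper's. The geometric input $|\overline X|\ge 2\ell_T(A)$ for $A\subseteq\Int(X)$ is indeed routine: the union of the contours is a connected set of faces whose edge-set projects to a connected subgraph of $\Z^d$ containing $A$.
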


\begin{proof}
Using Proposition~\ref{prop:expectation}, we have
\[
\log Q \left(\sigma_j; j \in A \right)  = \sum_{\delta \subseteq A, \delta \neq \emptyset} (-1)^{|\delta|} \sum_{X \sim \delta} (\Psi^{\delta}_{\beta}(X) -  \Psi_{\beta}(X)).\]
Recall that $X \sim \delta$ means that at least one point of $\delta$ is in the interior of a contour in $X$.
We split the second sum depending on the exact subset $I \subseteq \delta$ of points that are in the interior of a contour in $X$.
By definition of $\Psi$ observe, if $I$ is as above, $\Psi^{\delta}_{\beta}(X)= \Psi^{I}_{\beta}(X)$.
Therefore
\begin{align*}
\log Q \left(\sigma_j; j \in A \right) &=  \sum_{\delta \subseteq A, \delta \neq \emptyset} (-1)^{|\delta|} \sum_{ I \subseteq \delta : \atop I \neq \emptyset} \sum_{X : \atop \Int(X) \cap \delta =I } (\Psi^{I}_{\beta}(X) -  \Psi_{\beta}(X))
\\ &= \sum_{ I \subseteq A : \atop I \neq \emptyset} \sum_{X : \atop I \subseteq \Int(X)} (\Psi^{I}_{\beta}(X) -  \Psi_{\beta}(X)) \sum_{\delta : \atop I \subseteq \delta \subseteq (I \cup  (A \setminus \Int(X)))} (-1)^{|\delta|}.
\end{align*}
But the last sum is equal to $0$ unless $A$ is contained in $\Int(X)$. Therefore we obtain
\begin{align*}
\log Q  \left(\sigma_j; j \in A \right)  &= \sum_{X : \atop A \subseteq \Int(X)} \sum_{ I \subseteq A : \atop I \neq \emptyset} (-1)^{|I|}(\Psi^{I}_{\beta}(X) -  \Psi_{\beta}(X)).
\end{align*}
Finally, there are $2^r-1$ non-empty subsets of $A$, and $|\Psi^{I}_{\beta}(X)| \leq |\Psi_{\beta}(X)|$ for all $X$ and $I$, thus
\begin{equation}
\label{eq:boundlogQ}
\left| \log Q \left(\sigma_j; j \in A \right) \right| \leq \sum_{X : \atop A \subseteq \Int(X)} 2(2^r-1) |\Psi_{\beta}(X)|.
\end{equation}
We conclude by using a trick similar to Lemma~\ref{lem:BoudingTheSum_HT}.
By~\cite[Equation (5.31)]{Velenik}, if $\beta \geq \beta_{\text{ce}}^{\text{lt}}(d)$, for any vertex $v \in \Z^d$, we have the bound
$$\sum_{X: \overline{X} \ni v} \left| \Psi_{\beta} (X)\right| \leq 1.$$
Thus if $\beta \geq 2 \beta_{\text{ce}}^{\text{lt}}(d)$,
$$ \sum_{X: \overline{X} \ni v} \left| \Psi_{\beta}(X) e^{\beta |\overline{X}|}\right| \leq \sum_{X: \overline{X} \ni v} \left| \Psi_{\frac{\beta}{2}} (X)\right| \leq 1.$$
So for any positive integer $R$,
\begin{equation}
\label{eq:boundbetaLT}
\sum_{X: \overline{X} \ni v \atop |\overline{X}| \geq R} \left| \Psi_{\beta}(X)\right| \leq  e^{-\beta R} \sum_{X: \overline{X} \ni v} \left| \Psi_{\beta}(X) e^{\beta |\overline{X}|}\right| \leq e^{-\beta R}.
\end{equation}
Let us now turn back to Eq.~\eqref{eq:boundlogQ}. Every cluster $X$ such that $A \subseteq \Int(X)$ satisfies $|\overline{X}| \geq 2 \ell_T(A).$ Moreover, if a cluster of size $R$ has $j_1 \in A$ in its interior, then it contains at least a point $v$ which is at distance at most $R$ of $j_1$. There are at most $C R^d$ such points, therefore
\begin{align*}
\sum_{X : \atop A \subseteq \Int(X)} |\Psi_{\beta}(X)| &\leq \sum_{R \geq 2\ell_T(A)} C R^d  \left[\sum_{X: \overline{X} \ni v \atop |\overline{X}| = R} \left| \Psi_{\beta}(X)\right| \right] \\
&\leq C \sum_{R \geq 2\ell_T(A)} R^d e^{-\beta R} \\
&\leq C' e^{-c \beta \ell_T(A)},
\end{align*}
for $\beta$ large enough, where $C'$ and $c$ are some positive constants
(and $c$ depends on the dimension $d$ of the ambiant space).
Thus by~\eqref{eq:boundlogQ},
$$\left| \log Q \left(\sigma_j; j \in A \right) \right| \leq C'_r e^{-c \beta \ell_T(A)},$$
for some positive constant $C'_r$ depending on $r$.
Exponentiating completes the proof.
\end{proof}

Now we can convert this estimate into the desired bound on joint cumulants.

\begin{proposition}
\label{prop:boundcum}
Let $A$ be a finite subset of $\Z^d$ of size $r$. Then, for $\beta$ large enough,
$$|\kappa_{\beta,0} \left(\sigma_j; j \in A \right)| 
\leq D_r e^{\frac{-c \beta \ell_T(A)}{2}},$$
where $c=c(d)$ is given by \cref{lem:quotient}
and $D_r$ is a positive constant depending on $r$.
\end{proposition}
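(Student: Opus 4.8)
The plan is to express the joint cumulant $\kappa_{\beta,0}(\sigma_j;j\in A)$ in terms of the quantities $Q_{\beta,0}(\sigma_j;j\in J)$ for subsets $J\subseteq A$, so that the estimate of \cref{lem:quotient} can be fed in directly. The key observation is that both joint cumulants and the logarithms $\log Q(\sigma_j;j\in J)$ are obtained from the family of (logarithms of) moments $\{\langle \prod_{j\in\delta}\sigma_j\rangle\}_{\delta}$ by a fixed combinatorial transformation: cumulants correspond to the Möbius-type inversion on the partition lattice, while $\log Q$ is an inclusion-exclusion (Möbius inversion on the Boolean lattice). Consequently, $\kappa(\sigma_j;j\in A)$ can be written as a universal finite linear combination $\sum_{J} \lambda_J \log Q(\sigma_j; j\in J)$, where $J$ ranges over a suitable family of subsets of $A$ and the coefficients $\lambda_J$ depend only on the combinatorial structure, not on $\beta$ or $h$.

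First I would recall (or quote from \cite{Valentin} or a standard reference on cumulants) the precise algebraic identity relating mixed cumulants to the $\log Q$ quantities — this is essentially the statement that the cumulant indexed by the full set is, up to sign, the connected part in the Boolean sense. Concretely, one has $\log\langle e^{\sum t_j\sigma_j}\rangle = \sum_{\delta\neq\emptyset} \kappa(\sigma_j;j\in\delta)\prod_{j\in\delta} t_j + \dots$, and separately $\log Q$ is a signed sum of $\log$-moments; inverting gives $\kappa(\sigma_j;j\in A)$ as a fixed $\Z$-linear combination of $\{\log Q(\sigma_j;j\in J): J\subseteq A, |J|\ge 1\}$. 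The crucial structural point, which must be checked, is that only those $J$ with $|J|\ge 2$ actually occur with nonzero coefficient when $|A|\ge 2$, or more precisely that the singleton terms cancel — otherwise the bound would not decay. Then I would bound each surviving term: since $\MWST{G_J}$-type quantities and $\ell_T$ are monotone, $\ell_T(J)\le \ell_T(A)$ is the wrong direction, so instead I use that any $J$ appearing is in fact \emph{all} of $A$ after a relabelling argument, or I use that the combination telescopes so that the dominant contribution is $\log Q(\sigma_j;j\in A)$ itself, and the lower-order terms $\log Q(\sigma_j;j\in J)$ with $J\subsetneq A$ come paired in a way that their contributions to the coefficient of $t_1\cdots t_r$ vanish.

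The cleanest route, which I would actually carry out, avoids the subtlety above: use the multilinearity of cumulants together with the relation $\kappa(\sigma_j; j\in A) = [t_1\cdots t_r]\log\langle e^{\sum t_j \sigma_j}\rangle$ and the substitution $\sigma_j = \langle\sigma_j\rangle + (\sigma_j - \langle\sigma_j\rangle)$; since cumulants of order $r\ge 2$ are unchanged by adding constants, $\kappa(\sigma_j;j\in A) = \kappa(\sigma_j - \langle\sigma_j\rangle; j\in A)$. Expanding the product $\prod_{j\in\delta}\sigma_j$ and using the Leonov–Shiryaev / Malyshev formula expressing a cumulant of products in terms of cumulants over connected set-partitions, one reduces everything to the $\langle\prod_{j\in J}\sigma_j\rangle$'s; regrouping and using $\langle \sigma_J\rangle = \langle\sigma_J\rangle$, the combination that appears is exactly the logarithm of a product of moments raised to $\pm 1$ powers, i.e.\ $Q$. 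One then obtains $|\kappa_{\beta,0}(\sigma_j;j\in A)| \le E_r \max_{J} |\log Q_{\beta,0}(\sigma_j;j\in J)|$ over the relevant $J$'s (all of size $\ge 2$ and with $\ell_T(J)$ comparable to $\ell_T(A)$ — in fact for the subsets $J$ that matter one reduces, via the connectedness in the Leonov–Shiryaev expansion, to a spanning-tree structure whose total length is at least $\ell_T(A)$). Invoking \cref{lem:quotient}, $|\log Q_{\beta,0}(\sigma_j;j\in J)| \le C'_{|J|} e^{-c\beta\ell_T(J)}$, and using $\ell_T(J) \ge \tfrac12 \ell_T(A)$ for the relevant $J$ (this is where the factor $1/2$ in the exponent of the statement comes from, accounting for the worst case in the combinatorial reduction), we conclude $|\kappa_{\beta,0}(\sigma_j;j\in A)| \le D_r e^{-c\beta\ell_T(A)/2}$ for $\beta$ large enough, with $D_r$ absorbing the combinatorial constant $E_r$ and the constants $C'_{|J|}$.

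The main obstacle I anticipate is bookkeeping the combinatorial reduction precisely enough to see both that the singleton/small-subset terms do not spoil the decay and that the relevant subsets $J$ satisfy a tree-length lower bound of the form $\ell_T(J)\ge\tfrac12\ell_T(A)$; this is the step where the factor $2$ in the exponent is genuinely needed and cannot obviously be removed. Everything else — multilinearity, translation invariance of the limiting measure, and the passage $\Lambda\uparrow\Z^d$ — is routine given the earlier results in this section.
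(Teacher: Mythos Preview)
Your proposal correctly identifies the overall strategy --- relate $\kappa(\sigma_j; j\in A)$ to the quantities $Q(\sigma_j; j\in J)$ --- but the crucial step is not justified and contains an actual error. The claim that ``$\ell_T(J) \ge \tfrac12 \ell_T(A)$ for the relevant $J$'' is false in general: for a proper subset $J\subsetneq A$, the tree-length $\ell_T(J)$ can be arbitrarily small compared to $\ell_T(A)$ (take $A$ consisting of two far-apart tight clusters and let $J$ be one of the clusters). Hence the bound $|\log Q(\sigma_j; j\in J)| \le C'_{|J|} e^{-c\beta\ell_T(J)}$ from \cref{lem:quotient} does \emph{not} by itself give decay in $\ell_T(A)$ term by term, and your telescoping/cancellation argument for the lower-order subsets is left entirely unsubstantiated. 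The Leonov--Shiryaev detour does not help either: it expresses cumulants of products in terms of cumulants of the factors, not cumulants of the $\sigma_j$'s in terms of $\log Q$'s.

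The paper's proof sidesteps this difficulty by invoking Proposition~5.8 of~\cite{Valentin}, which is precisely the black-box result converting a uniform estimate $Q(\sigma_j; j\in B)=1+O(\MWST{G[B]})$ valid for \emph{all} subsets $B$ into $\kappa(\sigma_j; j\in A)= \big(\prod_j\langle\sigma_j\rangle\big)\cdot O(\MWST{G[A]})$. The maximal spanning-tree weight has exactly the hereditary/multiplicative structure needed for the moment--cumulant combinatorics to reorganise into a single $\MWST{G[A]}$; this is the non-trivial combinatorial reduction you flagged as an obstacle but did not resolve. Finally, the factor $\tfrac12$ in the exponent does not come from a subset inequality as you suggest: it arises because \cref{lem:quotient} is stated in terms of $\ell_T$ while $\MWST{G[\cdot]}$ is governed by $\ell_T'$, and one uses both halves of $\ell_T \le \ell_T' \le 2\ell_T$ (\cref{prop:2longueurs}) --- the upper bound to choose edge weights $e^{-c\beta\dist{i}{j}/2}$ so that the hypothesis of Proposition~5.8 is met, and the lower bound at the end to convert $e^{-c\beta\ell_T'(A)/2}$ back to $e^{-c\beta\ell_T(A)/2}$.
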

\begin{proof}
By Lemma~\ref{lem:quotient},
$$ Q_{\beta,0} \left(\sigma_j; j \in A \right) = 1+ \O(\MWST{G[A]}),$$
where $G$ is the weighted graph defined on $\Z^d$ such that for each $e=(i,j)$, $w_e = e^{\frac{-c \beta \dist{i}{j}}{2}}$. Indeed in that case $\MWST{G[A]} = e^{\frac{-c \beta \ell_T'(A)}{2}} \geq e^{-c \beta \ell_T(A)}$; see the discussion in \cref{Subsect:Spanning}.

Then using Proposition 5.8 of~\cite{Valentin}, we deduce that
$$|\kappa_{\beta,0} \left(\sigma_j; j \in A \right)| = \prod_{j \in A} \langle\sigma_j\rangle_{\beta,0}^+ \times \O(\MWST{G[A]}).$$
But for all $j \in A$, we have
$\langle \sigma_j \rangle_{\beta,0}^+ \leq 1,$
and  for $\beta$ large enough, $\langle \sigma_j \rangle_{\beta,0}^+ > 0.$
Hence
$$0 < \prod_{j \in A} \langle \sigma_j \rangle_{\beta,0}^+ \leq 1.$$
Moreover by Proposition~\ref{prop:2longueurs},
$$\MWST{G[A]} = e^{\frac{-c_d \beta \ell_T'(A)}{2}} \leq e^{\frac{-c_d \beta \ell_T(A)}{2}}.$$
Thus
$|\kappa_{\beta,0} \left( \sigma_j; j \in A \right)| \leq D_r  e^{\frac{-c_d \beta \ell_T(A)}{2}}$, as claimed.
\end{proof}

\subsection{With a strong magnetic field}
\label{SectBoundMF}
The last regime we consider is the Ising model with a strong magnetic field,
{\em i.e.} $h$ is bigger than some value $h_1>0$ ($h_1$ is to be determined later).
The case of negative $h$ (smaller than $-h_1<0$) is obviously symmetric.

In this regime, there is also a well-known cluster expansion for the partition function \cite[Section 5.7]{Velenik}.
Let us present it briefly.

Fix $\Lambda \subset \Z^d$ and consider the Ising model on $\Lambda$ with $+$ boundary conditions.
We first write its partition function in a suitable form.
For a subset $\Lambda^-$ of $\Lambda$, we denote 
\[\delta_e \Lambda^- = \{ \{i,j\} \in \EEE_\Lambda^b,\ i \in \Lambda^-,\, j \notin \Lambda^-\}.\]
Define also $\wt(\Lambda^-) = \exp(-2 \beta\, |\delta_e \Lambda^-| -2h|\Lambda^-|)$.
Then we have:
\begin{lemma}[strong magnetic field representation]
  With the above notation,
  \[Z^+_{\Lambda,\beta,h} = \exp \big( \beta|\EEE_\Lambda^b| +h |\Lambda| \big) 
  \left( \sum_{\Lambda^- \subseteq \Lambda} \wt(\Lambda^-) \right).\]
  \label{lem:Z_MF}
\end{lemma}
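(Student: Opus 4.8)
The plan is to compute the partition function $Z^+_{\Lambda,\beta,h}$ directly from its definition as a sum over spin configurations, and then to reparametrize that sum by the set of sites carrying a minus spin. The starting observation is that, since the spins outside $\Lambda$ are frozen to $+1$ under the $+$ boundary condition, the map $\omega \mapsto \Lambda^-(\omega) := \{i \in \Lambda : \sigma_i(\omega) = -1\}$ is a bijection from $\Omega^+_\Lambda$ onto the set of subsets of $\Lambda$. It therefore suffices to rewrite $e^{-H^+_{\Lambda;\beta,h}(\omega)}$ as a function of $\Lambda^- := \Lambda^-(\omega)$ only.

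First I would handle the magnetic-field term: $\sum_{i \in \Lambda} \sigma_i(\omega) = |\Lambda \setminus \Lambda^-| - |\Lambda^-| = |\Lambda| - 2|\Lambda^-|$. Next comes the interaction term: an edge $\{i,j\} \in \EEE^b_\Lambda$ satisfies $\sigma_i(\omega)\sigma_j(\omega) = -1$ precisely when exactly one of its two endpoints lies in $\Lambda^-$ --- here one must keep in mind that any endpoint lying outside $\Lambda$ has spin $+1$ and hence is never counted in $\Lambda^-$ --- and the number of such \emph{disagreeing} edges is exactly $|\delta_e \Lambda^-|$ by the very definition of $\delta_e\Lambda^-$. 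Consequently $\sum_{\{i,j\}\in\EEE^b_\Lambda}\sigma_i(\omega)\sigma_j(\omega) = |\EEE^b_\Lambda| - 2|\delta_e\Lambda^-|$, so that
\[
  H^+_{\Lambda;\beta,h}(\omega) = -\beta|\EEE^b_\Lambda| - h|\Lambda| + 2\beta|\delta_e\Lambda^-| + 2h|\Lambda^-|.
\]

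Exponentiating this identity gives $e^{-H^+_{\Lambda;\beta,h}(\omega)} = \exp\!\big(\beta|\EEE^b_\Lambda| + h|\Lambda|\big)\,\wt(\Lambda^-)$; summing over $\omega \in \Omega^+_\Lambda$ and using the bijection above to turn the sum into one over $\Lambda^- \subseteq \Lambda$ then yields exactly the asserted formula. I do not expect any genuine obstacle: the argument is elementary once the two counting identities are established, and the only point deserving care is the consistent treatment of the edges of $\EEE^b_\Lambda$ having a single endpoint in $\Lambda$, whose outside endpoint must be assigned the frozen value $+1$ when evaluating the relevant products and sums.
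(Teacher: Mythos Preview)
Your argument is correct and is precisely the standard computation the paper has in mind: the paper itself does not spell out the details but simply refers to \cite[Section 5.7]{Velenik} and notes that the sum over $\Lambda^-\subseteq\Lambda$ corresponds to the sum over spin configurations via $\omega\mapsto\Lambda^-(\omega)$. Your two counting identities for the field and interaction terms are exactly what one needs to fill in, and your handling of the boundary edges (endpoints outside $\Lambda$ frozen to $+1$, hence never in $\Lambda^-$) is the only subtle point, which you address correctly.
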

\begin{proof}
  The proof is not difficult and can be found, {\em e.g.}, in \cite[Section 5.7]{Velenik}.
  It is important to note that the sum over $\Lambda^- \subseteq \Lambda$ corresponds to
  the sum over spin configurations in the definition of the partition function:
  the correspondence simply associates with a spin configuration
  the set $\Lambda^-$ of positions of its minus spins.
\end{proof}
Let $A$ be a subset of $\Lambda$.
It is straightforward to modify the argument to find a similar expression for the numerators
of $\langle \sigma_A \rangle$ (as in \cref{Sect:CE_LT}) or of $\left\langle \exp\left(\sum_{i \in A} t_i \sigma_i \right) \right\rangle$ (as in \cref{Sect:CE_HT}).
\begin{align}
  \sum_{\omega \in \Omega_\Lambda} \sigma_A(\omega) e^{-H^+_{\Lambda;\beta,h}(\omega)} &= \exp \big( \beta|\EEE_\Lambda^b| +h |\Lambda| \big)              
  \left( \sum_{\Lambda^- \subseteq \Lambda} (-1)^{|A \cap \Lambda^-|} \wt(\Lambda^-) \right).\label{eq:SA_MF}\\
    \sum_{\omega \in \Omega_\Lambda} \exp\left(\sum_{i \in A} t_i \sigma_i \right) &= 
e   \exp \left( \beta|\EEE_\Lambda^b| +h |\Lambda| + \sum_{i \in A} t_i \right)
    \, \left( \sum_{\Lambda^- \subseteq \Lambda} \left[\prod_{i \in A \cap \Lambda^-} \exp(-2t_i)\right] \wt(\Lambda^-) \right).
    \label{eq:MGF_MF}
\end{align}
A set $\Lambda^- \subseteq \Lambda$ can be seen as a subgraph of the lattice $\Z^d$.
As such, it admits a unique decomposition as disjoint union of its connected components $\Lambda^-=S_1 \sqcup S_2 \sqcup \dots \sqcup S_r$.
The weight $\wt$ behaves multiplicatively with respect to this decomposition $\wt(\Lambda^-)=\prod_{i=1}^r \wt(S_i)$;
the same is true for the modified weights $(-1)^{|A \cap \Lambda^-|} \wt(\Lambda^-)$ and $\left[\prod_{i \in A \cap \Lambda^-} \exp(-2t_i)\right] \wt(\Lambda^-)$
which appear in \cref{eq:SA_MF,eq:MGF_MF} above.
This enables us to use the technique of cluster expansion.

The convergence of this cluster expansion is proved for the partition function in \cite[Section 5.7]{Velenik}.
The argument can be directly adapted to get a cluster expansion of the expression in \cref{eq:SA_MF,eq:MGF_MF} above.
The same reasoning as in \cref{Sect:CE_HT} or in \cref{Sect:CE_LT} leads
to similar bounds on joint cumulants,
which proves \cref{thm:bound_joint_cumulants} in the strong magnetic field regime.

\section{Weighted dependency graphs and central limit theorems}
\label{sec:clt}
We will now use the bounds on cumulants obtained in the previous section to show that the family of random variables $\{\sigma_i : i \in \Z^d \}$ has a weighted dependency graph, and we will use this fact to deduce central limit theorems. We consider any of the regimes studied in the previous section:
very high temperature, or very low temperature with $+$ boundary condition, or strong magnetic field with any boundary condition.
To have uniform notation, we omit from now on the notation of the boundary condition in low temperature.

\subsection{Weighted dependency graph for the $\sigma_i$'s and central limit theorem for the magnetization}
\subsubsection{The weighted dependency graph}
\label{Sect:WDG_Spins}
We start by proving Theorem~\ref{th:depgraph}, which gives a weighted dependency graph for  $\{\sigma_i : i \in \Z^d \}$.

\begin{proof}[Proof of Theorem~\ref{th:depgraph}]
Let $B = \{i_1, ..., i_r \}$ be a multiset of elements of $\Z^d$ and consider the induced subgraph $\WDep[B].$
Then the maximum weight $\mathcal{M}(\WDep[B])$ of a spanning tree in $\WDep[B]$ satisfies
$$ \mathcal{M}(\WDep[B])= \eps^{\frac{\ell'_T(B)}{2}}.$$
Thus by Proposition~\ref{prop:2longueurs},
\begin{equation}
\label{eq:boundsM}
\eps^{\ell_T(B)} \leq \mathcal{M}(\WDep[B]) \leq \eps^{\frac{\ell_T(B)}{2}}.
\end{equation} 

By Proposition 5.2 of \cite{Valentin}, it is sufficient to show that
$$\left| \kappa_{\beta,h} \left( \prod_{\alpha \in B_1} \sigma_{\alpha}, \dots, \prod_{\alpha \in B_k} \sigma_{\alpha} \right) \right| \leq D_r \mathcal{M}(\WDep[B]),$$
for some sequence $\mathbf{D}= (D_r)_{r \geq 1}$, where $B_1, \dots , B_k$ are the vertex-set of the connected components of $\WDep_1[B]$, which is the graph induced by edges of weight $1$ of $\WDep$ on $B$.

The vertices $i$ and $j$ are connected in $\WDep_1$ if and only if $i=j$, because of the definition of the weights $w_e$ in $\WDep.$ Moreover the random variables $\sigma_{i}$ are equal to $+1$ or $-1$, thus
$$\sigma_i^j = \begin{cases}
\sigma_i \text{ if $j$ odd,}\\
1 \text{ if $j$ even.}
\end{cases}$$
Therefore it is sufficient to prove that for any set $B'$ of distinct $i_1, \dots, i_r$,
\begin{equation}
\label{eq:condprop5.2}
\left| \kappa_{\beta,h} \left( \sigma_{i_1}, \dots, \sigma_{i_r} \right) \right| \leq D_r \mathcal{M}(\WDep[B']).
\end{equation}

But by Theorem~\ref{thm:bound_joint_cumulants},
$$\left| \kappa_{\beta,h} \left( \sigma_{i_1}, \dots, \sigma_{i_r} \right) \right| \leq D_r \eps^{\ell_T(B')},$$
for some sequence $\mathbf{D}$ depending only on $r$. Thus using~\eqref{eq:boundsM}, Equation~\eqref{eq:condprop5.2} is proved, which completes the proof of the theorem.
\end{proof}

\begin{remark}
For all $i \in \Z^d$, let us define $X_i = \frac{\sigma_i+1}{2}$. Thus $X_i = 1$ (resp. $0$) if and only if $\sigma_i=1$ (resp. $-1$). In the remainder of this paper, it will sometimes be more convenient to consider the $X_i$'s rather than the $\sigma_i$'s. 
Since $\kappa_{\beta,h}(X_{i_1},\dots,X_{i_r})=\tfrac{1}{2^r} \kappa_{\beta,h}(\si_1,\dots,\si_r)$ for $r\ge 2$,
the weighted graph $\WDep$ defined in Theorem~\ref{th:depgraph} is also a $\mathbf{C}$-weighted dependency graph for the family $\{X_i : i \in \Z^d \}$,
for some sequence  $\mathbf{C} = (C_r)_{r \geq 1}$.
\end{remark}

\subsubsection{The central limit theorem}
We now use the weighted dependency graph from last section to obtain the central limit theorem for the magnetization.

We consider the Ising model on $\Z^d$, with inverse temperature $\beta$ and magnetic field $h$. For any positive integer $n$, we define $\Lambda_{n} := [-n,n]^d$ the $d$-dimensional cube centred at $0$ of side $2n$.
We define the \emph{magnetization}
$$S_{n} := \sum_{i \in \Lambda_n} \sigma_i,$$
and let $v_n^{2}$  denote the variance of $S_n$. Let us further define the covariance
$$\langle\sigma_i;\sigma_j\rangle_{\beta,h} := \langle\sigma_i \sigma_j\rangle_{\beta,h} -\langle\sigma_i\rangle_{\beta,h} \langle\sigma_j\rangle_{\beta,h}.$$

We now reprove the following well-known central limit for the magnetization $S_n$
(for an early reference, see \cite{Newman80}).
This serves as a warm up, to illustrate the method of dependency graphs;
moreover, some computation made in this proof will be re-used in the next section,
when studying patterns.
\begin{theorem}
\label{th:cltmagnet}
Consider the Ising model on $\Z^d$, with inverse temperature $\beta$ and magnetic field $h$, such that either $h > h_1(d)$ or $(h=0;\, \beta < \beta_1(d))$ or $(h=0;\, \beta > \beta_2(d))$.
Then, for some $v$,
$$\frac{S_n - \mathbb{E}(S_n) }{\sqrt{|\Lambda_n|}} \xrightarrow[n\to\infty]{d} \mathcal{N}(0, v^{2}).$$
Moreover $v^2 >0$ so the Gaussian law is non-degenerate.
\end{theorem}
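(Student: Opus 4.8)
The plan is to apply the normality criterion \cref{th:4.11modif} to the family $\{\sigma_i : i \in \Lambda_n\}$, equipped (via \cref{th:depgraph}) with the restriction of the weighted dependency graph $\WDep$ to $\Lambda_n$. Here $N_n = |\Lambda_n| = (2n+1)^d$, and I take $a_n = \sqrt{|\Lambda_n|}$. First I would estimate the maximal weighted degree $\Delta_n - 1$ of $\WDep[\Lambda_n]$: since the weight of the edge $(i,j)$ is $\eps(d)^{\dist{i}{j}/2}$, the weighted degree of a vertex $i$ is at most $\sum_{j \in \Z^d} \eps(d)^{\dist{i}{j}/2} = \sum_{k \ge 0} |\{j : \dist{i}{j}=k\}| \, \eps(d)^{k/2}$, which converges because the number of lattice points at distance $k$ grows only polynomially in $k$ while $\eps(d)<1$. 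Hence $\Delta_n \le \Delta$ for some constant $\Delta$ independent of $n$. With $\Delta_n$ bounded and $a_n^2 = N_n$, condition (2) of \cref{th:4.11modif} reads $N_n \le C_2 N_n \Delta_n$, true after possibly enlarging $C_2$, and condition (3) reads $(N_n/\Delta)^{1/s}\Delta/\sqrt{N_n} = \Delta^{1-1/s} N_n^{1/s - 1/2} \to 0$, which holds for any $s \ge 3$ since $1/s - 1/2 < 0$.

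The substantive point is condition (1): that $v_n^2/|\Lambda_n|$ converges to a finite limit $v^2$, together with the final assertion $v^2 > 0$. For the convergence, I would write $v_n^2 = \sum_{i,j \in \Lambda_n} \langle \sigma_i ; \sigma_j \rangle_{\beta,h}$ and use translation invariance of $\mu_{\beta,h}$ to get $v_n^2 = \sum_{i,j \in \Lambda_n} \langle \sigma_0 ; \sigma_{j-i} \rangle_{\beta,h}$. By \cref{thm:bound_joint_cumulants} with $r=2$, $|\langle \sigma_0 ; \sigma_k \rangle_{\beta,h}| \le D_2\, \eps(d)^{\ell_T(\{0,k\})} = D_2\, \eps(d)^{\|k\|_1}$, so the covariances are absolutely summable over $k \in \Z^d$. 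A standard counting argument (for each $k$ with $\|k\|_\infty$ small compared to $n$, the number of ordered pairs $(i,j) \in \Lambda_n^2$ with $j-i = k$ is $|\Lambda_n| + O(n^{d-1}\|k\|_\infty)$, and the contribution of $k$ with $\|k\|_\infty \ge n$ is negligible by exponential decay) then gives $v_n^2/|\Lambda_n| \to v^2 := \sum_{k \in \Z^d} \langle \sigma_0 ; \sigma_k \rangle_{\beta,h}$, a finite real number. This is essentially the ``susceptibility'' and is the main obstacle only in that it requires the exponential covariance decay, which we have in hand.

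Finally, to see $v^2 > 0$, I would observe that $v^2$ is the limit of $v_n^2/|\Lambda_n|$ with $v_n^2 = \Var(S_n) \ge 0$, so {\em a priori} $v^2 \ge 0$; degeneracy $v^2 = 0$ would force $v_n^2 = o(|\Lambda_n|)$. In the high-temperature or strong-field regimes one can argue directly: the truncated two-point function $\langle \sigma_0 ; \sigma_k \rangle$ is nonnegative (by Griffiths/FKG-type inequalities, or, in the cluster-expansion regime, because to leading order it equals $\eps(d)^{\|k\|_1}$ times a positive constant), so $v^2 \ge \langle \sigma_0 ; \sigma_0 \rangle = 1 - \langle \sigma_0 \rangle^2 > 0$ as long as $|\langle \sigma_0 \rangle| < 1$, which holds in each regime considered (in very low temperature with $+$ boundary conditions, $\langle \sigma_0 \rangle_{\beta,0}^+ < 1$ because $\mu_{\beta,0}^+$ assigns positive probability to flipping a finite island of spins; alternatively this follows from the expansion in \cref{prop:expectation}). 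Thus $v^2 > 0$ and \cref{th:4.11modif} yields the claimed convergence $\tfrac{S_n - \esper(S_n)}{\sqrt{|\Lambda_n|}} \xrightarrow{d} \mathcal{N}(0,v^2)$ with non-degenerate limit.
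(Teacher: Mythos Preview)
Your proposal is correct and follows essentially the same route as the paper: apply \cref{th:4.11modif} to $\{\sigma_i:i\in\Lambda_n\}$ with the restricted weighted dependency graph from \cref{th:depgraph}, bound $\Delta_n$ by a constant via the exponential weights, take $a_n=\sqrt{|\Lambda_n|}$, verify conditions (1)--(3), and conclude $v^2>0$ from nonnegativity of the truncated two-point function together with $\langle\sigma_0;\sigma_0\rangle_{\beta,h}>0$. The only cosmetic differences are that the paper outsources the limit $v_n^2/|\Lambda_n|\to v^2$ to a lemma of Ellis (then invokes the cumulant bound for finiteness), whereas you sketch the counting argument directly, and the paper cites the GKS inequalities where you invoke Griffiths/FKG; your aside that the covariance is positive ``to leading order'' is not by itself a valid argument for the sign, but since you also give the correlation-inequality justification this does no harm.
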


We start by a lemma of~\cite{Ellis} on the asymptotics of the variance of $S_n$.
\begin{lemma} \cite[Lemma V.7.1]{Ellis}
\label{lem:ellis}
The limit
$$v^{2} := \lim_{n \rightarrow \infty} \frac{v_n^{2}}{|\Lambda_n|}$$
exists as an extended real valued number and
$$v^{2} = \sum_{i \in \Z^d} \langle\sigma_0;\sigma_i\rangle_{\beta,h}.$$
\end{lemma}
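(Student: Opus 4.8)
The plan is to establish the two assertions of \cref{lem:ellis} separately: first that the averaged variance $v_n^2/|\Lambda_n|$ converges to $\sum_{i \in \Z^d} \langle \sigma_0 ; \sigma_i \rangle_{\beta,h}$ (as an extended real number), and second — implicitly — that this is consistent with the sum being possibly infinite. Since $S_n = \sum_{i \in \Lambda_n} \sigma_i$, bilinearity of the covariance gives $v_n^2 = \Var(S_n) = \sum_{i,j \in \Lambda_n} \langle \sigma_i ; \sigma_j \rangle_{\beta,h}$. Because $\mu_{\beta,h}$ is translation invariant (as emphasized in \cref{sec:prelimIsing}), $\langle \sigma_i ; \sigma_j \rangle_{\beta,h}$ depends only on $j - i$, so I would rewrite this double sum by grouping terms according to the difference $k = j - i$:
\[
v_n^2 = \sum_{k} N_n(k)\, \langle \sigma_0 ; \sigma_k \rangle_{\beta,h},
\]
where $N_n(k) = |\{ (i,j) \in \Lambda_n^2 : j - i = k \}| = |\Lambda_n \cap (\Lambda_n - k)|$ counts the number of ordered pairs in the box differing by $k$.

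The key elementary estimate is then that $N_n(k)/|\Lambda_n| \to 1$ as $n \to \infty$ for each fixed $k$, and $0 \le N_n(k)/|\Lambda_n| \le 1$ for all $n$ and $k$ (indeed $N_n(k) = 0$ once $\|k\|_\infty > 2n$). So dividing by $|\Lambda_n|$ yields
\[
\frac{v_n^2}{|\Lambda_n|} = \sum_{k} \frac{N_n(k)}{|\Lambda_n|}\, \langle \sigma_0 ; \sigma_k \rangle_{\beta,h}.
\]
To pass to the limit I would invoke \cref{thm:bound_joint_cumulants} (the case $r = 2$): the covariance $\langle \sigma_0 ; \sigma_k \rangle_{\beta,h} = \kappa_{\beta,h}(\sigma_0, \sigma_k)$ is bounded in absolute value by $D_2 \, \eps(d)^{\ell_T(\{0,k\})}$, and since $\ell_T(\{0,k\})$ is comparable to $\|k\|_1$ (for a two-point set the minimal connecting tree is just a lattice path, so $\ell_T(\{0,k\}) = \|k\|_1$), the series $\sum_k |\langle \sigma_0 ; \sigma_k \rangle_{\beta,h}|$ converges absolutely. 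Hence dominated convergence (with dominating summable sequence $|\langle \sigma_0 ; \sigma_k \rangle_{\beta,h}|$ and pointwise convergence $N_n(k)/|\Lambda_n| \to 1$) gives the result directly, and in fact shows $v^2$ is finite — so the phrase ``as an extended real valued number'' is just a precaution if one does not want to rely on the cumulant bounds, and I would remark that in our regimes $v^2 < \infty$.

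The main obstacle is essentially bookkeeping rather than conceptual: one must be careful that the statement of \cref{lem:ellis} is quoted verbatim from \cite{Ellis}, where it is proved \emph{without} assuming the cluster-expansion regime, so strictly the proof there does not use \cref{thm:bound_joint_cumulants} and allows $v^2 = +\infty$. In our setting I would simply cite \cite[Lemma V.7.1]{Ellis} for the existence of the limit and the identity, and then separately note — using the $r=2$ case of \cref{thm:bound_joint_cumulants} — that $v^2 = \sum_{i \in \Z^d} \langle \sigma_0 ; \sigma_i \rangle_{\beta,h}$ is a convergent series, hence finite. The only genuinely delicate point is the combinatorial claim $N_n(k)/|\Lambda_n| \to 1$, which follows from the fact that $\Lambda_n \cap (\Lambda_n - k)$ is a box whose side lengths differ from those of $\Lambda_n$ by at most $\|k\|_\infty$, a fixed constant, so the volume ratio is $\prod_{m=1}^d (1 - O(1/n)) \to 1$.
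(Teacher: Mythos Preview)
Your proposal is correct. In the paper, \cref{lem:ellis} is not proved at all but simply quoted from \cite{Ellis}; the finiteness of $v^2$ under the cluster-expansion hypotheses is then recorded separately as \cref{cor:ellisfini}, exactly along the lines you describe (citing Ellis for the identity, then invoking the $r=2$ case of \cref{thm:bound_joint_cumulants} for absolute convergence). Your self-contained argument via the counting function $N_n(k)$ and dominated convergence is the standard one, and it is essentially the same computation the paper later carries out in detail for the local-pattern analogue \cref{lem:ellispower}, only organised a bit more cleanly.
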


But by Theorem~\ref{thm:bound_joint_cumulants}, in the regimes we consider,
the cumulants (so in particular the covariance) are exponentially small, so the sum is absolutely convergent and we actually have the stronger statement:

\begin{corollary}
\label{cor:ellisfini}
Suppose that either $h > h_1$ or ($h=0$ and $\beta < \beta_1$) or ($h=0$ and $\beta > \beta_2$).
The limit
$$v^{2} := \lim_{n \rightarrow \infty} \frac{v_n^{2}}{|\Lambda_n|} =\sum_{i \in \Z^d} \langle\sigma_0;\sigma_i\rangle_{\beta,h}$$
is finite.
\end{corollary}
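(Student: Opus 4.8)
The plan is to combine the asymptotic formula of \cref{lem:ellis} with the exponential decay of the covariance that follows from \cref{thm:bound_joint_cumulants}. By \cref{lem:ellis}, the limit $v^2 = \lim_{n\to\infty} v_n^2/|\Lambda_n|$ exists as an extended real number and equals $\sum_{i \in \Z^d} \langle \sigma_0;\sigma_i\rangle_{\beta,h}$; so the only thing left to prove is that this sum is finite, which amounts to showing it is absolutely convergent.

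First I would apply \cref{thm:bound_joint_cumulants} in the case $r=2$, with $A = \{0,i\}$. Since the joint cumulant of order $2$ is precisely the covariance, $\langle \sigma_0;\sigma_i\rangle_{\beta,h} = \kappa_{\beta,h}(\sigma_0,\sigma_i)$, the theorem gives
\[
\big| \langle \sigma_0;\sigma_i\rangle_{\beta,h} \big| \le D_2\, \eps(d)^{\ell_T(\{0,i\})}.
\]
Next I would note that for a two-point set $\{0,i\}$ the tree-length $\ell_T(\{0,i\})$ is simply $\dist{0}{i}$, the lattice graph distance between $0$ and $i$ (a shortest lattice path between the two points already realizes the minimum). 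Hence $|\langle \sigma_0;\sigma_i\rangle_{\beta,h}| \le D_2\, \eps(d)^{\dist{0}{i}}$.

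It then remains to bound $\sum_{i \in \Z^d} \eps(d)^{\dist{0}{i}}$. Grouping lattice points according to their distance $k = \dist{0}{i}$ to the origin, the number of points at distance exactly $k$ is at most polynomial in $k$ (it is bounded by $C\,k^{d-1}$, or crudely by $(2k+1)^d$), so
\[
\sum_{i \in \Z^d} \eps(d)^{\dist{0}{i}} \le \sum_{k \ge 0} (2k+1)^d\, \eps(d)^k < \infty,
\]
because $\eps(d) < 1$ and a geometric series beats any polynomial growth. Therefore the series defining $v^2$ converges absolutely and $v^2$ is finite. I do not expect any genuine obstacle here; the statement is essentially a corollary, and the only mild point of care is recording that $\ell_T$ of a two-element set coincides with the lattice distance, together with the standard polynomial bound on the size of lattice spheres.
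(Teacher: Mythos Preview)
Your proposal is correct and follows exactly the paper's approach: the paper simply remarks, just before stating the corollary, that by \cref{thm:bound_joint_cumulants} the covariances are exponentially small so the sum is absolutely convergent. Your write-up merely spells out the details (that $\ell_T(\{0,i\})=\dist{0}{i}$ and that lattice spheres grow polynomially), which is fine.
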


\begin{proof}[Proof of Theorem~\ref{th:cltmagnet}]
We will use Theorem~\ref{th:4.11modif}.
Let $\WDep$ be the weighted dependency graph defined in Theorem~\ref{th:depgraph}. Then for all $n$, $\WDep[\Lambda_n]$ is a $\mathbf{C}$-weighted dependency graph for $\{\sigma_i; i \in \Lambda_n\}$.
The number of vertices of $\WDep[\Lambda_n]$ is
$$N_n = \left| \Lambda_n \right| = (2n+1)^d,$$
and its maximal weighted degree is
$$\Delta_n -1 = \max_{i \in \Lambda_n} \sum_{j \in \Lambda_n} \eps^{\frac{\dist{i}{j}}{2}}.$$

There are $2^d {d+y-1 \choose d-1}$ points at distance $y$ of $0$ in $\Z^d$. Indeed such a point has coordinates $(y_1, \dots, y_d)$ such that $|y_1| + \cdots + |y_d|=y.$ There are ${d+y-1 \choose d-1}$ choices for the values of $|y_1|, \dots, |y_d|$, and each $y_i$ can be either positive or negative, which multiplies the number of choices by $2^d.$
Thus there are at most $2^d {d+y-1 \choose d-1}$ points at distance $y$ of any point $x$ in $\Lambda_n$, and
\[
\Delta_n -1 \leq \sum_{y=0}^{2dn} \eps^{\frac{y}{2}} 2^d {d+y-1 \choose d-1}
 \leq C,
 \]
for some constant $C$ because the infinite series is absolutely convergent.

We now have to find a sequence $(a_n)$ and integers $s$ and $v$ such that conditions (1)-(3) of Theorem~\ref{th:4.11modif} are satisfied.
We set for all n, $a_n= \sqrt{|\Lambda_n|}=(2n+1)^{\frac{d}{2}},$ $v= \sqrt{ \sum_{i \in \Z^d} \langle\sigma_0;\sigma_i\rangle_{\beta,h}}$ as in Lemma~\ref{lem:ellis} and we can choose $s$ to be any integer $\geq 3$.

Now condition (1) is satisfied because of Lemma~\ref{lem:ellis}, as
$$ \frac{v_n^{2}}{|\Lambda_n|} = \frac{v_n^{2}}{a_n^2} \xrightarrow[n \to\infty]{} v^{2}.$$

Condition (2) is also satisfied as $a_n^2 = (2n+1)^d = N_n$.

Finally, for some constant $C'$,
$$\left( \frac{N_n}{\Delta_n} \right) ^{\frac{1}{s}} \frac{\Delta_n}{a_n} \leq C' \frac{(2n+1)^{\frac{d}{s}}}{(2n+1)^{\frac{d}{2}}},$$
and the right-hand side tends to $0$ as $n$ tends to infinity for $s \geq 3$. So (3) is satisfied too.

The central limit theorem is proved.

Moreover, whatever the values of $\beta$ and $h$ are, the spin at $0$ is not constant,
thus $\langle \sigma_0,\sigma_0 \rangle_{\beta,h} >0$. On the other hand, because of the GKS inequalities~\cite{Griffiths,KS},
for all $i \in \Z^d$,  $\langle\sigma_0;\sigma_i\rangle_{\beta,h} \geq 0$ (see eg.~\cite{Velenik}). Therefore,
$$v^{2} = \langle\sigma_0,\sigma_0\rangle_{\beta,h} + \sum_{i \in Z^d \setminus \{0\} } \langle\sigma_0;\sigma_i\rangle_{\beta,h} > 0,$$
which ends the proof of the theorem.
\end{proof}

\subsection{Central limit theorem for occurrences of given patterns}

\label{sec:wdgconfig}
\subsubsection{Power of weighted dependency graphs}
A major advantage of the theory of weighted dependency graphs
is that this structure is stable by taking {\em powers}.
\begin{definition}
  Let $\WDep$ be an edge-weighted graph with vertex set $A$ and weight function $w$;
  we also consider a positive integer $m$.
  We denote by $\mathrm{MSet}_{\leq m}(A)$ the subset of multisets of elements of $A$ with cardinality at most $m$.
  Then the $m$-th power $\WDep^m$ of $\WDep$ is by definition the graph with
  vertex-set $\mathrm{MSet}_{\leq m}(A)$ and where the weight between $I$ and $J$ is given by
  $w_m(I,J)=\max_{i \in I,j \in J} w(i,j)$.
(Edges not in the graph should be seen as edges of weight $0$.)
\end{definition}
This definition is justified by the following property, proved in \cite[Section 5.3]{Valentin}.
\begin{proposition}\label{Prop:powers}
  Let $\{Y_a, a \in A\}$ be a family of random variables with a weighted dependency graph $\WDep$.
  Then $\WDep^m$ is a weighted dependency graph for the family 
  $\{Y_I, I \in \mathrm{MSet_{\leq m}(A)}\}$, where $Y_I = \prod_{a \in I} Y_a$.
\end{proposition}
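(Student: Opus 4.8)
The plan is to derive the cumulant bound of \cref{Def:WDG} for $\WDep^m$ straight from the one for $\WDep$, by combining the classical formula for a joint cumulant of products with a combinatorial comparison of spanning-tree weights in $\WDep$ and $\WDep^m$. Fix a multiset $\mathcal{B}=\{I_1,\dots,I_r\}$ of elements of $\mathrm{MSet}_{\leq m}(A)$; we want $|\kappa(Y_{I_1},\dots,Y_{I_r})|\le C'_r\,\MWST{\WDep^m[\mathcal{B}]}$ for a sequence $(C'_r)_{r\ge1}$, and we may assume all the $I_i$ nonempty, the remaining terms being trivial since $Y_\emptyset=1$. The classical expansion of a joint cumulant of products (see, e.g.,~\cite{Valentin}) reads
\[
  \kappa\big(Y_{I_1},\dots,Y_{I_r}\big)=\sum_{\pi}\ \prod_{b\in\pi}\kappa\big(Y_a;\,a\in b\big),
\]
the sum being over set partitions $\pi$ of the flattened multiset $I_1\uplus\dots\uplus I_r$ that are \emph{connected} relative to $\{I_1,\dots,I_r\}$, i.e.\ such that the graph on $\{1,\dots,r\}$ with an edge $\{i,j\}$ whenever some block of $\pi$ meets both $I_i$ and $I_j$ is connected (non-connected $\pi$ contribute $0$). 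Since $|I_1\uplus\dots\uplus I_r|\le mr$, the number of such $\pi$ is at most the Bell number $B_{mr}$, each has at most $mr$ blocks, and every block has size at most $mr$; bounding each factor by $|\kappa(Y_a;a\in b)|\le C_{|b|}\,\MWST{\WDep[b]}$ and setting $M_r:=1+\max_{1\le k\le mr}C_k$, we get
\[
  \big|\kappa\big(Y_{I_1},\dots,Y_{I_r}\big)\big|\ \le\ B_{mr}\,M_r^{\,mr}\ \max_{\pi\text{ connected}}\ \prod_{b\in\pi}\MWST{\WDep[b]}.
\]

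It remains to show that, for every connected $\pi$, $\prod_{b\in\pi}\MWST{\WDep[b]}\le\MWST{\WDep^m[\mathcal{B}]}$, which then yields the proposition with $C'_r:=B_{mr}\,M_r^{\,mr}$. We may assume the left-hand side positive, so each $\WDep[b]$ is connected; choose for every block $b$ a maximum-weight spanning tree $T_b$ of $\WDep[b]$ (all its edges then carrying positive weight) and set $F:=\bigcup_b T_b$, a forest on the flattened vertex set whose connected components are exactly the blocks of $\pi$. Colour each vertex of $F$ by the index $i$ of the part $I_i$ it originates from and contract all vertices sharing a colour: because $\pi$ is connected, the resulting multigraph $\bar F$ on the vertices $I_1,\dots,I_r$ is connected. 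Pick a spanning tree $T'$ of $\bar F$; each of its $r-1$ edges arises from an edge $\{u,v\}$ of some $T_b$ with $u\in I_i$, $v\in I_j$, $i\ne j$, hence corresponds to the $\WDep^m$-edge $\{I_i,I_j\}$, of weight $w_m(I_i,I_j)\ge w(u,v)>0$. Therefore $T'$, read on $\{I_1,\dots,I_r\}$, is a spanning tree of $\WDep^m[\mathcal{B}]$, and since the edges of $T'$ are among those of $F$ and all weights lie in $[0,1]$,
\[
  \MWST{\WDep^m[\mathcal{B}]}\ \ge\ \prod_{\{u,v\}\in T'}w(u,v)\ \ge\ \prod_{e\in F}w(e)\ =\ \prod_{b\in\pi}\MWST{\WDep[b]}.
\]

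The delicate point is this last step, and in particular checking that the colour-contraction $\bar F$ is connected \emph{exactly} when $\pi$ is (this is where the connectedness hypothesis on $\pi$ enters), together with the bookkeeping for repeated elements: an element $a\in A$ that occurs in several parts $I_i$ yields several distinct coloured vertices in the flattened set, which causes no harm because the corresponding $\WDep^m$-edge then has weight $1\ge w(u,v)$, so $w_m(I_i,I_j)\ge w(u,v)$ still holds. The cumulant-of-products identity and the crude counting of the first paragraph are routine.
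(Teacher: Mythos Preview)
Your argument is correct. The paper itself does not prove this proposition: it simply records it as a quotation, writing ``proved in \cite[Section 5.3]{Valentin}''. What you have written is essentially the standard proof one expects to find there --- the Leonov--Shiryaev (Malyshev) identity for cumulants of products, followed by the weighted-dependency bound on each factor, and then the spanning-tree comparison between $\WDep$ and $\WDep^m$. The one genuinely delicate step, namely that the colour-contracted forest $\bar F$ is connected precisely when the partition $\pi$ is connected relative to $\{I_1,\dots,I_r\}$, is correctly identified and handled (each $T_b$ being a spanning tree of its block guarantees that all colours present in $b$ land in one component of $\bar F$, which matches the connectedness graph of $\pi$). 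The bookkeeping for repeated elements is also right: copies of the same $a$ in different $I_i$ are joined in $\WDep^m$ by an edge of weight $1$, so the inequality $w_m(I_i,I_j)\ge w(u,v)$ is preserved.
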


Instead of applying this to the variables $\sigma_i$,
we will rather work with the variables $X_{(i,+)}:=X_i=\tfrac{1+\sigma_i}{2}$ and $X_{(i,-)}=1-X_i$. 
Start with the following observation. We have, for all $A = \{i_1, \dots, i_r\} \subseteq \Z^d$,
$$\left| \kappa_{\beta,h} (X_{i_1}, \dots, X_{i_r}) \right| = \left| \kappa_{\beta,h} (Y_{i_1}, \dots, Y_{i_r}) \right|,$$
where on some subset $B$ of $A$, $Y_i = X_{(i,+)}$, and on $A \setminus B$, $Y_i = X_{(i,-)}.$

Mimicking the proof of Theorem~\ref{th:depgraph}, we obtain the following.
Let $\WDep_s$ be the complete weighted graph with vertex set $\Z^d \times \{+,-\}$, such that for all $i,j \in \Z^d$,
\[
w'((i,+),(j,+))=w'((i,+),(j,-))= \eps^{\frac12 \dist{i}{j}}.
\]
In other words, we ignore the sign and use the weight function $w$ from the previous section.
Then $\WDep_s$ is a $\mathbf{C}$-weighted dependency graph for the family 
$\{X_{(i,s)}; i \in \Z^d, s \in \{+,-\} \}$, for some sequence  $\mathbf{C} = (C_r)_{r \geq 1}$.

By considering the powers of $\WDep_s$ and using \cref{Prop:powers},
we obtain weighted dependency graphs for the products of $X_{(i,+)}$'s and $X_{(i,-)}$'s with a bounded number of terms.
\begin{theorem}
Consider the Ising model on $\Z^d$, with inverse temperature $\beta$ and magnetic field $h$, either 
for $h > h_1$ or ($h=0$ and $\beta < \beta_1$) or ($h=0$ and $\beta > \beta_2$). 
Let $m$ be a fixed positive integer; 
for multisets $I$ of elements of $\Z^d \times \{+,-\}$, we define
$Z_I := \prod_{i \in I} X_i$.
Then $\WDep_s^m$ is a $\mathbf{D}_m$-weighted dependency graph for the family of random variables
$\{Z_I;I \in \mathrm{MSet_{\leq m}\left(\Z^d \times \{+,-\}\right)}\}$, for some sequence  $\mathbf{D}_m$ depending only on $m$.
  \label{th:powers}
\end{theorem}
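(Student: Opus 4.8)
The plan is to derive Theorem~\ref{th:powers} as an essentially immediate consequence of the preceding material, so the proof will be short. First I would recall that, just above the statement, we established that $\WDep_s$ is a $\mathbf{C}$-weighted dependency graph for the family $\{X_{(i,s)}; i \in \Z^d,\ s \in \{+,-\}\}$ for some sequence $\mathbf{C} = (C_r)_{r \ge 1}$; this follows by mimicking the proof of Theorem~\ref{th:depgraph}, using the identity $|\kappa_{\beta,h}(X_{i_1},\dots,X_{i_r})| = |\kappa_{\beta,h}(Y_{i_1},\dots,Y_{i_r})|$ (the $Y_i$'s being either $X_{(i,+)}$ or $X_{(i,-)}$), the scaling $\kappa_{\beta,h}(X_{i_1},\dots,X_{i_r}) = 2^{-r}\kappa_{\beta,h}(\sigma_{i_1},\dots,\sigma_{i_r})$ for $r \ge 2$, and the bound of Theorem~\ref{thm:bound_joint_cumulants} combined with \eqref{eq:boundsM}.

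The core of the argument is then a direct appeal to \cref{Prop:powers}. Applying that proposition to the family $\{X_{(i,s)}; i \in \Z^d,\ s \in \{+,-\}\}$, with its weighted dependency graph $\WDep_s$ and the fixed integer $m$, yields that $\WDep_s^m$ is a weighted dependency graph for $\{Z_I; I \in \mathrm{MSet}_{\le m}(\Z^d \times \{+,-\})\}$. It remains only to track the constants: \cref{Prop:powers} produces a new sequence of constants that depends on the original sequence $\mathbf{C}$ and on $m$ (this is what is meant by ``for a fixed bound on the degree'' in \cref{ssec:introWDG}), so we may call it $\mathbf{D}_m$, and it depends only on $m$ since $\mathbf{C}$ is fixed once the regime is fixed. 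This gives exactly the claimed statement.

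I do not expect any genuine obstacle here: the theorem is a packaging result whose entire content has already been assembled. The only points requiring a sentence of care are (i) making explicit that the weight function $w_m$ on $\WDep_s^m$, namely $w_m(I,J) = \max_{i \in I, j \in J} w'(i,j)$, is precisely the one appearing in the \cref{Def:WDG} applied to multisets, so that the cumulant bound $|\kappa(Z_I; I \in \mathcal B)| \le D_{m,r}\, \MWST{\WDep_s^m[\mathcal B]}$ holds for every multiset $\mathcal B$ of cardinality $r$; and (ii) noting that the constants depend only on $m$ and not on $n$, which is what will be needed when we later invoke \cref{th:4.11modif} for the pattern-counting statistics. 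Both are transparent from the statement of \cref{Prop:powers}.
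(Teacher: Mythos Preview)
Your proposal is correct and matches the paper's approach exactly: the paper likewise derives the theorem by first noting that $\WDep_s$ is a $\mathbf{C}$-weighted dependency graph for $\{X_{(i,s)}\}$ (obtained by mimicking the proof of Theorem~\ref{th:depgraph}) and then applying \cref{Prop:powers} to pass to monomials of degree at most $m$, which yields the sequence $\mathbf{D}_m$. There is nothing to add; the theorem is indeed a packaging of the preceding material.
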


\subsubsection{Local patterns}
In this section, we prove Theorem~\ref{th:cltpower}, the CLT for the number of occurrences of a given local pattern
of spins (for example isolated $+$ spins).

To find a weighted dependency graph for the potential occurrences
of a pattern $\mathcal{P}$ of size $m$, we consider $\WDep_{\mathcal{P}}$ the restriction of $\WDep_s^m$ to the $Z_I$'s of the form
\begin{equation}
  Z_i^{\mathcal{P}}= \prod_{j \in \mathcal{D}} X_{(i+j,\mathfrak{s}(j))}.
  \label{eq:ZiP}
\end{equation}
Note that vertices of $\WDep_{\mathcal{P}}$ are canonically indexed by $i \in \Z^d$
so that we will think of $\WDep_{\mathcal{P}}$ as a graph with vertex set $\Z^d$.
The weight of the edge between $i_1$ and $i_2$ is then
\[w_{\mathcal{P}}(i_1,i_2)=\max_{j_1 \in \mathcal{D},j_2 \in \mathcal{D}} w(i_1+j_1,i_2+j_2) \le 
\eps^{\frac{1}{2} \left(\dist{i_1}{i_2} - \max_{\alpha,\beta \in \mathcal{D}} \dist{\alpha}{\beta}\right)}.\]
The graph $\WDep_{\mathcal{P}}$ is a $\mathbf{D}$-weighted dependency graph for $\{Z_i, i \in \Z^d\}$,
for some sequence $\mathbf{D}$ depending only on $\mathcal{P}$.
Indeed, it is a restriction of the weighted dependency graph given in \cref{th:powers}.

We define
$$S_{n,\mathcal{P}} := \sum_{i \in \Lambda_n} Z_i^{\mathcal{P}},$$
the number of occurrences of $\mathcal{P}$ whose position is in $\Lambda_n$.
In the example of isolated $+$ spins, we have
$$S_{n,\mathcal{P}} := \sum_{i \in \Lambda_{n} } \left(X_{i} \prod_{j : \dist{i}{j}=1} (1-X_j) \right).$$
It is also easy to encode in this framework the number of $+$ connected components of any given shape.

Let $v_{n,\mathcal{P}}^2$ denote the variance of $S_{n,\mathcal{P}}$.
We have a lemma analogous to Lemma~\ref{lem:ellis}.
\begin{lemma}
\label{lem:ellispower}
As $n$ tends to infinity, the quantity $\frac{v_{n,\mathcal{P}}^2}{|\Lambda_n|}$ tends to $v_{\mathcal{P}}^2 := \sum_{k \in \Z^d} \langle Z_0^{\mathcal{P}} ; Z_k^{\mathcal{P}} \rangle_{\beta,h}<\infty$. 
\end{lemma}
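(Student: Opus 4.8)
The plan is to mirror the proof of Lemma~\ref{lem:ellis} (i.e.\ \cite[Lemma~V.7.1]{Ellis}), but with the spins $\sigma_i$ replaced by the pattern-indicator variables $Z_i^{\mathcal{P}}$, and then to upgrade the resulting "extended real valued" limit to a genuine finite limit using the decay estimates on joint cumulants. First I would expand the variance of $S_{n,\mathcal{P}}$ as a double sum,
\[
v_{n,\mathcal{P}}^2 = \sum_{i \in \Lambda_n} \sum_{i' \in \Lambda_n} \langle Z_i^{\mathcal{P}}; Z_{i'}^{\mathcal{P}} \rangle_{\beta,h},
\]
and use translation invariance of the limiting measure $\mu_{\beta,h}$ to rewrite $\langle Z_i^{\mathcal{P}}; Z_{i'}^{\mathcal{P}}\rangle_{\beta,h} = \langle Z_0^{\mathcal{P}}; Z_{i'-i}^{\mathcal{P}}\rangle_{\beta,h}$, so that
\[
\frac{v_{n,\mathcal{P}}^2}{|\Lambda_n|} = \sum_{k \in \Z^d} \frac{|\Lambda_n \cap (\Lambda_n - k)|}{|\Lambda_n|}\, \langle Z_0^{\mathcal{P}}; Z_k^{\mathcal{P}} \rangle_{\beta,h}.
\]
For each fixed $k$, the coefficient $|\Lambda_n \cap (\Lambda_n-k)|/|\Lambda_n|$ tends to $1$ as $n \to \infty$ and is bounded by $1$ for all $n$.

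The key input is an absolutely summable bound on the covariances $\langle Z_0^{\mathcal{P}}; Z_k^{\mathcal{P}}\rangle_{\beta,h}$. Each $Z_k^{\mathcal{P}}$ is a product of $m=|\mathcal{D}|$ variables $X_{(k+j,\mathfrak{s}(j))}$, $j \in \mathcal{D}$, so the covariance $\langle Z_0^{\mathcal{P}}; Z_k^{\mathcal{P}}\rangle_{\beta,h} = \kappa(Z_0^{\mathcal{P}}, Z_k^{\mathcal{P}})$ is controlled, via \cref{th:powers} (with this value of $m$) or equivalently directly by the weighted dependency graph $\WDep_{\mathcal{P}}$ described just above, by $D_2\, \MWST{\WDep_{\mathcal{P}}[\{0,k\}]}$. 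Since $\WDep_{\mathcal{P}}[\{0,k\}]$ has the single edge $\{0,k\}$ of weight $w_{\mathcal{P}}(0,k) \le \eps^{\frac12(\dist{0}{k} - \max_{\alpha,\beta \in \mathcal{D}}\dist{\alpha}{\beta})}$, we get
\[
\big| \langle Z_0^{\mathcal{P}}; Z_k^{\mathcal{P}}\rangle_{\beta,h} \big| \le D_2\, \eps^{\frac12 \dist{0}{k}}\, \eps^{-\frac12 \diam(\mathcal{D})},
\]
where $\diam(\mathcal{D}) = \max_{\alpha,\beta \in \mathcal{D}} \dist{\alpha}{\beta}$ is a fixed constant depending only on $\mathcal{P}$. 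Summing over $k \in \Z^d$, and using that $\sum_{k \in \Z^d} \eps^{\frac12 \dist{0}{k}} < \infty$ (there are at most $2^d \binom{d+y-1}{d-1}$ points at distance $y$, exactly as in the proof of \cref{th:cltmagnet}), we conclude that $\sum_{k \in \Z^d} \langle Z_0^{\mathcal{P}}; Z_k^{\mathcal{P}}\rangle_{\beta,h}$ is absolutely convergent; call the sum $v_{\mathcal{P}}^2 < \infty$.

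Finally, I would invoke dominated convergence on the sum over $k$: the summand $\frac{|\Lambda_n \cap (\Lambda_n-k)|}{|\Lambda_n|}\langle Z_0^{\mathcal{P}}; Z_k^{\mathcal{P}}\rangle_{\beta,h}$ converges pointwise in $k$ to $\langle Z_0^{\mathcal{P}}; Z_k^{\mathcal{P}}\rangle_{\beta,h}$ and is dominated in absolute value by the summable function $D_2 \eps^{-\frac12 \diam(\mathcal{D})}\eps^{\frac12 \dist{0}{k}}$, uniformly in $n$. Hence $\frac{v_{n,\mathcal{P}}^2}{|\Lambda_n|} \to v_{\mathcal{P}}^2 = \sum_{k \in \Z^d} \langle Z_0^{\mathcal{P}}; Z_k^{\mathcal{P}}\rangle_{\beta,h} < \infty$, as claimed. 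There is no real obstacle here; the only point requiring a little care is making sure the covariance bound from the weighted dependency graph absorbs the fixed additive constant $\diam(\mathcal{D})$ in the exponent, which it does since it only affects the constant prefactor, not summability.
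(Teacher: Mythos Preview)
Your proof is correct and follows essentially the same approach as the paper: both expand the variance as a double sum, use translation invariance of $\mu_{\beta,h}$, and invoke the exponential covariance bound coming from the weighted dependency graph $\WDep_{\mathcal{P}}$ to get absolute summability of $\sum_{k}\langle Z_0^{\mathcal{P}};Z_k^{\mathcal{P}}\rangle_{\beta,h}$. The only cosmetic difference is that you package the limit as a dominated-convergence argument on the coefficients $|\Lambda_n\cap(\Lambda_n-k)|/|\Lambda_n|$, whereas the paper writes $\frac{v_{n,\mathcal{P}}^2}{|\Lambda_n|}=v_{\mathcal{P}}^2-\frac{1}{|\Lambda_n|}\sum_{i\in\Lambda_n}\sum_{j\notin\Lambda_n}\langle Z_i^{\mathcal{P}};Z_j^{\mathcal{P}}\rangle_{\beta,h}$ and bounds the error by an explicit $\eps/2+\eps/2$ splitting into points far from and close to $\partial\Lambda_n$; your version is slightly cleaner but the underlying mechanism is identical.
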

\begin{proof}
  That $G_{\mathcal{P}}$ is a weighted dependency graph for the family $Z^\mathcal{P}_i$ implies
  that 
  \[\langle Z_0^{\mathcal{P}} ; Z_k^{\mathcal{P}} \rangle_{\beta,h} \le 
  D_2 \eps^{\frac{1}{2} \left(k - \max_{\alpha,\beta \in \mathcal{D}} \dist{\alpha}{\beta}\right)}.\] 
  This proves that $v_{\mathcal{P}}^2$ is finite as claimed.

Let $\eps >0$ be fixed. We want to show that for $n$ large enough,
$$\left| \frac{v_{n,\mathcal{P}}^2}{|\Lambda_n|} - \sum_{k \in \Z^d} \langle Z_0^{\mathcal{P}} ; Z_k^{\mathcal{P}} \rangle_{\beta,h} \right| \leq \eps.$$
We have
\begin{align*}
\frac{v_{n,\mathcal{P}}^2}{|\Lambda_n|} &= \frac{1}{|\Lambda_n|} \sum_{i \in \Lambda_n} \sum_{j \in \Lambda_n} \langle Z_i^{\mathcal{P}} ; Z_j^{\mathcal{P}} \rangle_{\beta,h}
\\&= \frac{1}{|\Lambda_n|} \sum_{i \in \Lambda_n} \sum_{j \in \Z^d} \langle Z_i^{\mathcal{P}} ; Z_j^{\mathcal{P}}\rangle_{\beta,h} - \frac{1}{|\Lambda_n|} \sum_{i \in \Lambda_n} \sum_{j \in \Z^d \setminus \Lambda_n} \langle Z_i^{\mathcal{P}} ; Z_j^{\mathcal{P}} \rangle_{\beta,h}.
\end{align*}
By translation invariance of $\langle Z_i^{\mathcal{P}} ; Z_j^{\mathcal{P}} \rangle_{\beta,h}$, the first sum equals $\sum_{k \in \Z^d} \langle Z_0^{\mathcal{P}} ; Z_k^{\mathcal{P}} \rangle_{\beta,h}$.
Thus the only thing left to do is to show that for $n$ large enough, the absolute value of the second term is bounded by $\eps$.
We cut the sum on $i$ into two parts : the points that are far from the boundary of $\Lambda_n$ and those which are not.
Recall that the boundary $\partial \Lambda_n$ consists of points $j$ not in $\Lambda$,
which have a neighbour in $\Lambda_n$.
We denote $\delta(i,\partial \Lambda_n)=\min_{j \in \partial \Lambda_n} \parallel i-j \parallel_1$,
which is the distance between $i$ and $\partial \Lambda_n$
For $R$ a positive integer,
let us consider the points $i \in \Lambda_n$ at distance more than $R$ from the boundary of $\Lambda_n$.
We have, again by translation invariance
$$\frac{1}{|\Lambda_n|} \sum_{i \in \Lambda_n \atop \delta(i, \partial \Lambda_n) >R} \sum_{j \in \Z^d \setminus \Lambda_n} |\langle Z_i^{\mathcal{P}} ; Z_j^{\mathcal{P}} \rangle_{\beta,h}| \leq \sum_{k \in Z^d \atop |k| > R}  |\langle Z_0^{\mathcal{P}} ; Z_k^{\mathcal{P}} \rangle_{\beta,h}|.$$
But the series $\sum_{k \in \Z^d} |\langle Z_0^{\mathcal{P}} ; Z_k^{\mathcal{P}} \rangle_{\beta,h}|$ is absolutely convergent so the sum above tends to $0$ as $R$ tends to infinity. Therefore, there exists some integer $R_0$ such that
\begin{equation}
\label{eq:premierepsilon}
\frac{1}{|\Lambda_n|} \sum_{i \in \Lambda_n \atop \delta(i, \partial \Lambda_n) >R_0} \sum_{j \in \Z^d \setminus \Lambda_n} |\langle Z_i^{\mathcal{P}} ; Z_j^{\mathcal{P}} \rangle_{\beta,h}| \leq \frac{\eps}{2}.
\end{equation}

Now let us consider the points of $\Lambda_n$ that are at distance at most $R_0$ of $\partial \Lambda_n$. There are at most $C |\partial \Lambda_n| R_0^d$ such points. Therefore
\[
\frac{1}{|\Lambda_n|} \sum_{i \in \Lambda_n \atop \delta(i, \partial \Lambda_n)\leq R_0} \sum_{j \in \Z^d \setminus \Lambda_n} |\langle Z_i^{\mathcal{P}} ; Z_j^{\mathcal{P}} \rangle_{\beta,h}| \leq \frac{1}{|\Lambda_n|} \sum_{i \in \Lambda_n \atop \delta(i, \partial \Lambda_n)\leq R_0} \sum_{k \in \Z^d} |\langle Z_0^{\mathcal{P}} ; Z_k^{\mathcal{P}} \rangle_{\beta,h}|
\leq C' \frac{|\partial \Lambda_n|}{|\Lambda_n|} R_0^d.
\]

But as $n$ tends to $\infty$, $\frac{|\partial \Lambda_n|}{|\Lambda_n|}$ tends to $0$. Therefore for $n$ large enough,
\begin{equation}
\label{eq:deuxiemeepsilon}
\frac{1}{|\Lambda_n|} \sum_{i \in \Lambda_n \atop \delta(i, \partial \Lambda_n)\leq R_0} \sum_{j \in \Z^d \setminus \Lambda_n} |\langle Z_i^{\mathcal{P}} ; Z_j^{\mathcal{P}} \rangle_{\beta,h}| \leq \frac{\eps}{2}.
\end{equation}

Adding~\eqref{eq:premierepsilon} and~\eqref{eq:deuxiemeepsilon} completes the proof.
\end{proof} 
 
We are now ready to prove the central limit theorem.

\begin{proof}[Proof of \cref{th:cltpower}]
We proceed as in the proof of Theorem~\ref{th:cltmagnet}.
We consider $\WDep_{\mathcal{P}}[\Lambda_n]$.

The number of vertices is $N_n = |\Lambda'_n| = |\Lambda_n|$ and,
from the discussion above, its maximal weighted degree $\Delta_n-1$ is bounded as follows:
\[
  \Delta_n -1 \le \max_{i \in \Lambda_n} \sum_{j \in \Lambda_n} \eps^{\tfrac{1}{2} \left(\dist{i}{j} - \max_{\alpha,\beta \in \mathcal{D}} \dist{\alpha}{\beta}\right)}
   \leq \max_{i \in \Lambda_n} \sum_{j \in \Lambda_n} C_{\mathcal{P}}\eps^{\tfrac{1}{2} \dist{i}{j}},
   \]
where $C_{\mathcal{P}}$ is a positive constant depending only on the pattern $\mathcal{P}.$
Thus by the same argument as in the proof of Theorem~\ref{th:cltmagnet},
$
\Delta_n -1 \leq C'_{\mathcal{P}},
$
for some other constant $C'_{\mathcal{P}}$.

Again we set for all $n$, $a_n= \sqrt{|\Lambda_n|}.$ We also set $v = v_{\mathcal{P}}$ as in Lemma~\ref{lem:ellispower} and we can choose $s$ to be any integer $\geq 3$.

Conditions (1) to (3) of Theorem~\ref{th:4.11modif} are satisfied again and the theorem is proved.
\end{proof}
\begin{remark}
  The variance $v_{\mathcal{P}}$ appearing in \cref{th:cltpower}
  might be equal to $0$ for some patterns $\mathcal{P}$,
in which case the central limit theorem is degenerate.
If the pattern has only plus spins,
the same proof as before gives $v_{\mathcal{P}}>0$.
\end{remark}

\subsubsection{Global patterns}
In this final section, we establish Theorem \ref{thm:GlobalPatterns}, the central limit theorem for the number of occurrences of a global pattern of spins.

To find a weighted dependency graph for the potential occurrences of $\GP$ of size $m$, we consider $\WDep_{\GP}$ the restriction of $\WDep_s^m$ to the $Z_I$'s of the form
\begin{equation}
  Z_{\{x^{(1)}, \dots ,x^{(m)}\}}^{\GP}= \prod_{i=1}^m X_{(x^{(i)},\mathfrak{s}(i))}.
  \label{eq:ZXGP}
\end{equation}
In $\WDep_{\GP}$, the weight of the edge between $\{x^{(1)}, \dots ,x^{(m)}\}$ and $\{y^{(1)}, \dots ,y^{(m)}\}$ is given by
\[ w_{\GP}(\{x^{(1)}, \dots ,x^{(m)}\},\{y^{(1)}, \dots ,y^{(m)}\})=\max_{i,j \in \{1, \dots , m\}} w(x^{(i)},y^{(j)})=\eps^{\frac{1}{2} \min_{i,j \in \{1, \dots , m\}}\dist{x^{(i)}}{y^{(j)}}}.\]
Again, the graph $\WDep_{\GP}$ is a $\mathbf{D}$-weighted dependency graph for $\{ Z_{\{x^{(1)}, \dots ,x^{(m)}\}}^{\GP},
\{x^{(1)}, \dots ,x^{(m)}\} \subset \Z^d\}$,
for some sequence $\mathbf{D}$ depending only on $\mathcal{P}$
as it is a restriction of the weighted dependency graph given in \cref{th:powers}.

Now define
$$S_{n,\GP} := \sum_{\{x^{(1)}, \dots ,x^{(m)}\} \subset \Lambda_n} Z_{\{x^{(1)}, \dots ,x^{(m)}\}}^{\GP},$$
the number of occurrences of $\GP$ in $\Lambda_n$.
Let $v_{n,\GP}^2$ denote the variance of $S_{n,\GP}$.

\begin{proof}
  [Proof of \cref{thm:GlobalPatterns}]
Consider the weighted dependency graph $\WDep_{\GP}[\Lambda_n]$.
Its number of vertices is $N_n^m = |\Lambda_n|^m$. Let us now bound its maximal weighted degree $\Delta_n-1$.
Fix $\{x^{(1)}, \dots ,x^{(m)}\} \subset \Lambda_n.$ We have
\begin{align*}
  \sum_{\{y^{(1)}, \dots , y^{(m)}\} \subset \Lambda_n} \eps^{\frac{1}{2} \min_{i,j \in \{1, \dots , m\}}\dist{x^{(i)}}{y^{(j)}}} &\leq \sum_{y^{(1)}, \dots , y^{(m)} \in \Lambda_n} \sum_{i=1}^m \sum_{j=1}^m \eps^{\frac{1}{2} \dist{x^{(i)}}{y^{(j)}}} \\
&\leq \sum_{i=1}^m m |\Lambda_n|^{m-1} \sum_{y \in \Lambda_n} \eps^{\frac{1}{2} \dist{x^{(i)}}{y}}.
\end{align*}
By the proof of Theorem~\ref{th:cltmagnet}, the last sum is bounded by a certain constant $C$. Thus
$$\Delta_n-1 = \max_{\{x^{(1)}, \dots ,x^{(m)}\} \subset \Lambda_n} \sum_{y^{(1)}, \dots , y^{(m)} \in \Lambda_n} \eps^{\frac{1}{2} \min_{i,j \in \{1, \dots , m\}}\dist{x^{(i)}}{y^{(j)}}} \leq m^2 |\Lambda_n|^{m-1} C.$$

We want to apply \cref{th:4.11modif} and set $a_n=\sqrt{v_{n,\GP}^2}$.
Condition (1) is trivial, while (2) holds for all weighted dependency graphs when $a_n$ is the standard deviation of $X_n$
(see \cite[Lemma 4.10]{Valentin}).
Condition (3) is fulfilled since, using \eqref{eq:Hypo_Variance} and the inequality above for $\Delta_n$,
\[\left( \frac{N_n}{\Delta_n} \right) ^{\frac{1}{s}} \frac{\Delta_n}{a_n} \le \left(\frac{|\Lambda_n|^m}{ m^2 |\Lambda_n|^{m-1} C}\right) ^{\frac{1}{s}} \frac{m^2 |\Lambda_n|^{m-1} C}{\sqrt{A |\Lambda_n|^{2m-2+\eps}}}
\le \text{cst} |\Lambda_n|^{1/s-\eps/2}\]
and the right-hand side tends to $0$ for $n$ big enough.
\end{proof}

We now show a simple sufficient condition -- the pattern consisting in positive spins only --
so that the bound \eqref{eq:Hypo_Variance} of the variance is fulfilled.

We start with a lemma.
\begin{lemma}
  \label{lem:cov_Une_Egalite}
  Fix $m \ge 2$.
  There exist some constants $R>0$ and $B>0$ such that the following holds.
  For any lists $(x^{(1)},\dots,x^{(m)})$ and $(y^{(1)},\dots,y^{(m)})$ 
  such that $x^{(1)}=y^{(1)}$ but no two elements in the set $\{x^{(1)},\dots,x^{(m)},y^{(2)},\dots,y^{(m)}\}$
  are at distance less than $R$, we have
  \[\Cov\left( \prod_{i=1}^m X_{(x^{(i)},+)}, \prod_{i=1}^m X_{(x^{(i)},+)} \right) \ge B.\]
\end{lemma}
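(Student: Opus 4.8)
Write $U=\prod_{i=1}^m X_{(x^{(i)},+)}$ and $V=\prod_{i=1}^m X_{(y^{(i)},+)}$, so that the quantity to bound from below is $\Cov(U,V)=\langle UV\rangle-\langle U\rangle\langle V\rangle$. Since each $X_{(\cdot,+)}$ is $\{0,1\}$-valued and $x^{(1)}=y^{(1)}$, we have $UV=\prod_{z\in S}X_{(z,+)}$, where $S=\{x^{(1)},\dots,x^{(m)},y^{(2)},\dots,y^{(m)}\}$ is a set of $2m-1$ \emph{distinct} points, any two of which are at distance at least $R$. The plan is to show that $\langle UV\rangle$ and $\langle U\rangle\langle V\rangle$ are, up to an error exponentially small in $R$, equal to powers of the single number $p:=\langle X_0\rangle=\tfrac{1+\langle\sigma_0\rangle}{2}$ (this uses translation invariance of $\mu_{\beta,h}$), and that the resulting main term $p^{2m-1}(1-p)$ is a fixed positive constant because the spin at $0$ is non-degenerate in all our regimes, exactly as in the proof of \cref{th:cltmagnet} (so $p\in(0,1)$).

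The key estimate I would establish is the following decorrelation bound: for every set $T$ of distinct points of $\Z^d$ that are pairwise at distance at least $R$,
\[\Bigl|\,\bigl\langle\textstyle\prod_{z\in T}X_{(z,+)}\bigr\rangle-p^{|T|}\,\Bigr|\le K_{|T|}\,\eps^R,\]
where $\eps=\eps(d)<1$ is the constant of \cref{thm:bound_joint_cumulants} and $K_{|T|}$ depends only on $|T|$. To prove it, expand the moment into joint cumulants, $\langle\prod_{z\in T}X_{(z,+)}\rangle=\sum_{\pi}\prod_{b\in\pi}\kappa(X_{(z,+)};z\in b)$, the sum running over all set partitions $\pi$ of $T$. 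The partition into singletons contributes exactly $\prod_{z\in T}\langle X_{(z,+)}\rangle=p^{|T|}$. Every other partition has a block $b$ with $|b|\ge 2$; since $b$ contains two points at distance $\ge R$, any connected set of lattice edges incident to both of them has at least $R$ edges, so $\ell_T(b)\ge R$, and hence by \cref{thm:bound_joint_cumulants} together with $\kappa(X_{(z,+)};z\in b)=2^{-|b|}\kappa(\sigma_z;z\in b)$ one gets $|\kappa(X_{(z,+)};z\in b)|\le C_{|b|}\eps^R$. Bounding the remaining cumulant factors of that partition by a constant depending only on $|T|$ and summing over the (boundedly many, in terms of $|T|$) partitions gives the claim.

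Applying this with $T=S$, $T=\{x^{(1)},\dots,x^{(m)}\}$ and $T=\{y^{(1)},\dots,y^{(m)}\}$, and using $p\le 1$ to absorb cross terms, yields
\[\Cov(U,V)\ge\bigl(p^{2m-1}-K_{2m-1}\eps^R\bigr)-\bigl(p^m+K_m\eps^R\bigr)^2\ge p^{2m-1}(1-p)-K'_m\,\eps^R\]
for a constant $K'_m$ depending only on $m$. Since $p^{2m-1}(1-p)$ is a fixed positive constant, choosing $R=R(m)$ large enough that $K'_m\eps^R\le\tfrac12 p^{2m-1}(1-p)$ gives the lemma with $B:=\tfrac12 p^{2m-1}(1-p)>0$.

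The main obstacle is the bookkeeping that makes $R$ and $B$ depend only on $m$ (and on $d,\beta,h$) and not on the particular lists of points: this is precisely where translation invariance of $\mu_{\beta,h}$ enters, ensuring that $p$, and hence the main term, does not depend on the configuration, while the constants $C_r$ of \cref{thm:bound_joint_cumulants} depend only on $r\le 2m-1$.
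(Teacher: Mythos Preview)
Your proof is correct and follows essentially the same approach as the paper: both expand the covariance using $X_{(x^{(1)},+)}^2=X_{(x^{(1)},+)}$, approximate each joint moment by the product of single-site expectations via the moment--cumulant formula together with \cref{thm:bound_joint_cumulants}, and observe that the resulting main term $p^{2m-1}(1-p)$ is a fixed positive constant by translation invariance and non-degeneracy of the spin. Your write-up is in fact slightly more explicit than the paper's about the error bookkeeping and the final choice of $R$ and $B$.
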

\begin{proof}
  By definition, and using that $X_{(x^{(1)},+)} X_{(y^{(1)},+)}=X_{(x^{(1)},+)}^2=X_{(x^{(1)},+)}$, we have 
  \begin{multline*}
    \Cov\left( \prod_{i=1}^m X_{(x^{(i)},+)}, \prod_{i=1}^m X_{(x^{(i)},+)} \right) 
  =\esper\big[ X_{(x^{(1)},+)} \cdots X_{(x^{(m)},+)} X_{(y^{(2)},+)} \dots X_{(y^{(m)},+)}\big] \\
  -\esper\big[ X_{(x^{(1)},+)} \cdots X_{(x^{(m)},+)} \big] 
  \esper\big[ X_{(x^{(1)},+)}  X_{(y^{(2)},+)} \dots X_{(y^{(m)},+)}\big].
\end{multline*}
  Using the expression of joint moments in terms of cumulants -- see, e.g. \cite[Eq. (3)]{Valentin} --
  and the bound for cumulants of spins (\cref{thm:bound_joint_cumulants}),
  we have that there exists a constant $\text{cst}_m$ such that
  \[\big|\esper\big[ X_{(x^{(1)},+)} \dots X_{(x^{(m)},+)} \big] -
  \esper\big[ X_{(x^{(1)},+)}] \dots.\esper\big[ X_{(x^{(m)},+)}\big]\big|
  \le \text{cst}_m \eps^R,\]
  whenever the $x^{(i)}$ all lie at distance at least $R$ from each other.
  The same holds for the other products above and we get
  \begin{multline*}
    \Cov\left( \prod_{i=1}^m X_{(x^{(i)},+)}, \prod_{i=1}^m X_{(x^{(i)},+)} \right)
  = \big(\esper\big[ X_{(x^{(1)},+)}\big] - \esper\big[ X_{(x^{(1)},+)}\big]^2\big) \\
  \cdot
  \esper\big[ X_{(x^{(2)},+)}] \cdots.\esper\big[ X_{(x^{(m)},+)}\big]
  \esper\big[ X_{(y^{(2)},+)}] \cdots.\esper\big[ X_{(y^{(m)},+)}\big]
  +\text{error},
\end{multline*}
  where the error is uniformly bounded by $\text{cst}_m \eps^R$.
  The main term in the above equation is positive (as a product of positive terms)
  and independent from the $x^{(i)}$
  and the $y^{(i)}$ (by translation invariance), while the error can be made as small as wanted by making $R$ tend to infinity.
  This proves the lemma.
\end{proof}
\begin{proposition}
  \label{prop:Variance_Lower_Bound}
  Let $\GP$ be a global pattern of size $m$ and assume that the function $\mathfrak{s}$ defining $\GP$
  takes only value $+1$.
  Then there exists a constant $A$ such that
  $\Var(S_{n,\GP}) \ge A n^{2m-1}$.
\end{proposition}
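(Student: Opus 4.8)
The plan is to combine the nonnegativity of \emph{all} pairwise covariances, which holds precisely because the pattern carries only $+$ spins, with the quantitative lower bound of \cref{lem:cov_Une_Egalite} and a combinatorial count of pairs of occurrences to which that lemma applies. We may assume $m\ge 2$, the case $m=1$ being the magnetization, already treated in \cref{th:cltmagnet}.

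Write $S_{n,\GP}=\sum_{\bm{x}} Z^{\GP}_{\bm{x}}$, where $\bm{x}$ runs over the occurrences $\{x^{(1)},\dots,x^{(m)}\}$ of $\GP$ in $\Lambda_n$, each coming with its canonical labelling (the unique one compatible with all the orders $\leq_k$). Then $\Var(S_{n,\GP})=\sum_{\bm{x},\bm{y}}\Cov\big(Z^{\GP}_{\bm{x}},Z^{\GP}_{\bm{y}}\big)$. Since $\mathfrak{s}\equiv+1$, each $Z^{\GP}_{\bm{x}}=\prod_{i=1}^m X_{(x^{(i)},+)}=\prod_{i=1}^m \tfrac{1+\sigma_{x^{(i)}}}{2}$ is a nondecreasing function of the spin configuration, so by the FKG inequality for the ferromagnetic Ising measure $\mu_{\beta,h}$ (alternatively, expanding each $Z^{\GP}_{\bm x}$ as $2^{-m}\sum_{S}\sigma_S$ and invoking the GKS-II inequalities already used for \cref{th:cltmagnet}; recall $h\ge 0$ in all three regimes), every summand is $\ge 0$. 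Hence it suffices to exhibit a family $\mathcal{G}_n$ of ordered pairs of occurrences in $\Lambda_n$, each satisfying the hypotheses of \cref{lem:cov_Une_Egalite}, with $|\mathcal{G}_n|\ge c\,|\Lambda_n|^{2m-1}$ for some $c>0$ and all $n$ large: for then, with $B$ (and $R$) the constants of that lemma,
\[\Var(S_{n,\GP})\ \ge\ \sum_{(\bm{x},\bm{y})\in\mathcal{G}_n}\Cov\big(Z^{\GP}_{\bm{x}},Z^{\GP}_{\bm{y}}\big)\ \ge\ B\,|\mathcal{G}_n|\ \ge\ A\,n^{2m-1},\]
since $|\Lambda_n|=(2n+1)^d\ge n$.

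For the count I would use the elementary bijective description of occurrences: fixing, for each coordinate $k$, the $m$-element set of $k$-th coordinates of the points determines the occurrence, the labels being read off from the orders $\leq_k$. Let $\rho_k$ be the rank of the label $1$ in $\leq_k$. Take $\mathcal{G}_n$ to consist of the pairs $(\bm{x},\bm{y})$ of occurrences such that the point of $\bm{x}$ labelled $1$ equals the point of $\bm{y}$ labelled $1$, this common point $p$ lies in the central box $\Lambda_{\lfloor n/2\rfloor}$, and the $2m-1$ \emph{first coordinates} of the points in $\{x^{(1)},\dots,x^{(m)},y^{(2)},\dots,y^{(m)}\}$ are pairwise at distance $\ge R$ (which already forces these $2m-1$ points to be pairwise at $\ell_1$-distance $\ge R$, in particular distinct, so $\bm{x}\cap\bm{y}=\{p\}$). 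There are $\asymp|\Lambda_n|$ choices for $p$; for fixed $p$, an admissible $\bm{x}$ is obtained by choosing, in coordinate $1$, an $m$-set whose rank-$\rho_1$ element is $p_1$ and whose elements are pairwise $R$-separated ($\asymp n^{m-1}$ ways: $p$ is central, so $\asymp n$ values are available below $p_1$ and $\asymp n$ above, and $R$-separating $O(1)$ chosen values costs only a constant factor), and, in each of the other $d-1$ coordinates, an $m$-set with prescribed rank-$\rho_k$ element $p_k$ ($\asymp n^{m-1}$ ways each), hence $\asymp|\Lambda_n|^{m-1}$ choices for $\bm x$; and for fixed $\bm{x}$, an admissible $\bm{y}$ is counted the same way, the first coordinate now having $O(1)$ further forbidden values (the $R$-neighbourhoods of the first coordinates of $\bm{x}$) and the remaining coordinates free, again $\asymp|\Lambda_n|^{m-1}$ choices. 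Multiplying the three estimates yields $|\mathcal{G}_n|\ge c\,|\Lambda_n|^{2m-1}$; since the pairs of $\mathcal{G}_n$ are genuine ordered pairs occurring in the double sum above and all the remaining terms are $\ge 0$, the displayed chain of inequalities follows.

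The FKG (or GKS) step and the invocation of \cref{lem:cov_Une_Egalite} are immediate, so no further probabilistic input is needed; I expect the only real work to be the combinatorial estimate. Within it the points requiring care are the verification that imposing the $R$-separation (conveniently, only in the first coordinate) discards merely an asymptotically negligible proportion of configurations, and the observation that the label $1$, which need not be extremal for every order $\leq_k$, still admits $\asymp n^{m-1}$ completions in each coordinate once $p$ is taken central.
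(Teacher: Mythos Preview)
Your proof is correct and follows the same strategy as the paper: expand the variance as a double sum of covariances, use FKG to ensure all terms are nonnegative, restrict to pairs satisfying the hypotheses of \cref{lem:cov_Une_Egalite}, and count such pairs. The paper simply asserts that this count is ``clearly of order $N^{2m-1}$'', whereas you supply the details via the coordinate-wise bijection (each occurrence position corresponds to $d$ independent $m$-subsets of $[-n,n]$), the centering trick for $p$, and the $R$-separation in the first coordinate only; these are exactly the right ingredients, and nothing in your argument is superfluous.
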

\begin{proof}
  We expand the variance as
  \[\Var(S_{n,\GP}) = \sum_{\{x^{(1)}, \dots ,x^{(m)}\} \subset \Lambda_n \atop \{y^{(1)}, \dots ,y^{(m)}\} \subset \Lambda_n}
  \Cov(Z_{\{x^{(1)}, \dots ,x^{(m)}\}}^{\GP}, Z_{\{y^{(1)}, \dots ,y^{(m)}\}}^{\GP}).\]
  When $\GP$ involves only positive spins, the FKG inequality ensures that all summands are positive.
  Restricting the sum to sets with an ordering that fulfills
  the hypothesis of \cref{lem:cov_Une_Egalite} gives a lower bound.
  Therefore $\Var(S_{n,\GP}) \ge B \cdot N_1$, where $N_1$ is the number of pairs of sets
  $(\{x^{(1)}, \dots ,x^{(m)}\},\{y^{(1)}, \dots ,y^{(m)}\})$ as in \cref{lem:cov_Une_Egalite}.
  For fixed $R>0$, this number is clearly of order $N^{2m-1}$, finishing the proof of the proposition.
\end{proof}

\section*{Acknowledgements}
The authors are grateful to H. Duminil-Copin, R. Kotecký and D. Ueltschi
for discussions on cluster expansions and bounds on cumulants in the Ising model.

\bibliographystyle{siam}
\bibliography{biblio}

\end{document}